\documentclass[11pt,reqno]{amsart}

\usepackage{amsmath, amsthm, amssymb,longtable,enumitem,multirow,hyperref}

\usepackage[a4paper, margin=1in]{geometry}

\title[GRADED ALGEBRAS AND SIGN CHANGES OF FOURIER COEFFICIENTS FOR ETA QUOTIENTS]
	{Graded Algebras of Modular Forms and Sign Changes
	of Fourier Coefficients for Eta Quotients}
\author{Jianwen Gan}
\address{School of Mathematical Sciences, Beijing Normal University, Beijing 100875, China}
\email{202631130037@mail.bnu.edu.cn}
\keywords{graded algebra, eta quotients, modular forms, sign changes of Fourier coefficients\rm}
\date{September 15, 2026}
\subjclass[2020]{11F11,11F20,11F30,11F37}

\newtheorem{theorem}{Theorem}[section]      
\newtheorem{lemma}{Lemma}[section]      

\begin{document}

\begin{abstract}
	In this paper, 
	for $N=8,12,16,20,32$, 
	we determine the explicit structure of the graded algebra of modular forms 
	for $\varGamma_0(N)$ of both integral and half-integral weights, 
	with all quadratic Dirichlet characters modulo $N$. 
	We give explicit eta quotient generators 
	and determine the relations among them. 
	Using these structures, 
	for a given weight, 
	a level among those considered above, 
	and a quadratic character, 
	we obtain a basis for the corresponding space of modular forms 
	consisting of eta quotients. Using these eta quotient bases, 
	we obtain necessary and sufficient conditions for the periodicity 
	of the signs of the Fourier coefficients of modular forms 
	in three specific spaces. 
	Finally, 
	we determine all eta quotients in these spaces 
	whose Fourier coefficient signs are periodic.
\end{abstract}
\maketitle

\section{Introduction and statement of results}

For a congruence subgroup $\varGamma$ of $\mathrm{SL}_2(\mathbb{Z})$, 
we denote by $\mathcal{M}_k(\varGamma)$ 
the $\mathbb{C}$-vector space of modular forms of weight $k\in\mathbb{Z}$ 
on $\varGamma$. 
We know from \cite{BB1966} that the graded \(\mathbb{C}\)-algebra
\[
\mathcal{M}(\varGamma)=\bigoplus_{k\in \mathbb{Z}, k\geqslant 0}\mathcal{M}_k(\varGamma)
\]
is finitely generated.
For $\varGamma(N)$, with $N\geqslant 3$, Khuri-Makdisi proves in \cite{KM2012}
that modular forms of weight $1$ suffice to generate $\mathcal{M}(\varGamma(N))$.
Rustom proves in \cite{RUSTOM2014}
that $\mathcal{M}(\varGamma_1(N))$ is generated in weight at most $3$ for $N\geqslant 5$,
and that $\mathcal{M}(\varGamma_0(N))$ is generated in weight at most $6$ for $N$ satisfying certain conditions.

The Dedekind eta function is the infinite product
$$
\eta(z)=e^{2\pi iz/24}\prod_{n=1}^{\infty}(1-q^n),
$$
with
$q=e^{2\pi iz}$
for 
$z \in \mathcal{H}=\{z\in \mathbb{C} \mid \mathrm{Im}(z)>0\}$.
An eta quotient $f(z)$ is of the form
$$
f(z)=\prod_{\delta\mid N}\eta(\delta z)^{r_\delta},
$$
where $N\geqslant 1$ and $r_\delta \in \mathbb{Z}$.
There are precisely 121 positive integers $N\leqslant 500$
so that $\mathcal{M}(\varGamma_0(N))$
is generated by eta quotients, as proved by Rouse and Webb in \cite{RW2015}.

We denote the $\mathbb{C}$-vector space 
of weight $k \in \frac{1}{2}\mathbb{Z}$ modular (resp. cusp) forms 
on $\varGamma_0(N)$ (if $k\in \frac{1}{2}+\mathbb{Z}$, than $4 \mid N$) 
with Nebentypus character $\chi$ by $\mathcal{M}_k(\varGamma_0(N),\chi)$
(resp.$\mathcal{S}_k(\varGamma_0(N),\chi)$).
If $\chi = \chi_0$ is the trivial character modulo $N$,
then we use the notation
$\mathcal{M}_k(\varGamma_0(N))$ (resp. $\mathcal{S}_k(\varGamma_0(N))$).
We denote the direct sum over all quadratic characters by
\begin{align*}
    \mathcal{M}_k^{\mathrm{quad}}(\varGamma_0(N))=\bigoplus_{\chi \, \mathrm{quadratic}}\mathcal{M}_k(\varGamma_0(N),\chi),
\end{align*}
\begin{align*}
	\mathcal{S}_k^{\mathrm{quad}}(\varGamma_0(N)) = \bigoplus_{\chi \, \mathrm{quadratic}} \mathcal{S}_k(\varGamma_0(N),\chi),
\end{align*}
where the superscript indicates restriction to quadratic characters 
(i.e., $\chi$ takes values {$\pm 1$}).
Then we have the graded $\mathbb{C}$-algebra
\[
\mathcal{M}_{\mathbb{Z}/2}^{\mathrm{quad}}(\varGamma_0(N))=\bigoplus_{k\in \frac{1}{2}\mathbb{Z}, k\geqslant 0}\mathcal{M}_k^{\mathrm{quad}}(\varGamma_0(N)),
\]
\[
\mathcal{S}_{\mathbb{Z}/2}^{\mathrm{quad}}(\varGamma_0(N))=\bigoplus_{k\in \frac{1}{2}\mathbb{Z}, k\geqslant 0}\mathcal{S}_k^{\mathrm{quad}}(\varGamma_0(N)).
\]
$\mathcal{S}_{\mathbb{Z}/2}^{\mathrm{quad}}(\varGamma_0(N)) $ is a homogeneous ideal of $\mathcal{M}_{\mathbb{Z}/2}^{\mathrm{quad}}(\varGamma_0(N))$.
It is a classical result that 
\begin{equation}
	\mathcal{M}_{\mathbb{Z}/2}^{\mathrm{quad}}(\varGamma_0(4))=\mathbb{C}[\frac{\eta(2z)^5}{\eta(z)^2\eta(4z)^2},\frac{\eta(4z)^8}{\eta(2z)^4}]
	\label{N=4}
\end{equation}
(see \cite{Cohen1975, Koblitz1993}).
This is an example where 
the graded algebra of modular forms for $\varGamma_0(N)$ 
of both integral and half-integral weights with quadratic Dirichlet character
can be generated by eta quotients.
Motivated by this identity, 
we seek analogous generating sets of eta quotients for other levels $N$. 

A key outcome of our work is the determination of the explicit structure 
(as a graded algebra) for these spaces of modular forms.
From \eqref{N=4}, we obtain the following simple description of the graded algebra
$$
\mathcal{M}_{\mathbb{Z}/2}^{\mathrm{quad}}(\varGamma_0(4))
\cong \mathbb{C}[x_1,x_2], 
\quad \frac{\eta(2z)^5}{\eta(z)^2\eta(4z)^2} \mapsto x_1,
\quad \frac{\eta(4z)^8}{\eta(2z)^4} \mapsto x_2,
$$
with $\deg(x_1)=1$ and $\deg(x_2)=4$. 
This grading follows the convention 
that a generator's degree is twice the weight of the corresponding modular form, 
hence $x_1$ and $x_2$ correspond to modular forms of weight $\frac{1}{2}$ and $2$, respectively, 
yielding a standard integer grading.
For a general level $N$, 
the graded algebra typically takes the form
$$
\mathcal{M}_{\mathbb{Z}/2}^{\mathrm{quad}}(\varGamma_0(N))\cong \mathbb{C}[x_1,x_2,\cdots x_r]/I,
$$
where $\mathbb{C}[x_1,x_2,\cdots x_r]$ is a graded polynomial ring 
(each $x_i$ assigned a degree) 
and $I$ is a homogeneous ideal encoding the relations among the corresponding modular forms.
In this paper, we abuse notation and write $x_i$ 
for the class $x_i+I$ in the quotient ring.

Using the isomorphism described above, 
we obtain an explicit basis of $\mathcal{M}_k^{\mathrm{quad}}(\varGamma_0(N))$. 
In this paper, 
the basis elements are specifically chosen to be eta quotients. 
The Fourier coefficients of eta quotients 
are not only relatively straightforward to compute 
but also frequently encode interesting combinatorial information. 
This makes our chosen basis particularly effective 
for studying various arithmetic properties of 
the Fourier coefficients of modular forms in this space, 
such as the periodicity of their sign changes.

The Fourier expansion of an eta quotient can be uniformly written as
$$
f(z)=\prod_{\delta\mid N}\eta(\delta z)^{r_\delta}=
q^{a_f}\sum_{n=0}^{\infty}C_{1^{r_1}\cdots N^{r_N}}(n)q^n,
\quad a_f=\frac{1}{24}\sum_{\delta \mid N}\delta r_\delta,
$$
which yields a systematic notation $C_{1^{r_1}\cdots N^{r_N}}(n)$ for its Fourier coefficients.
Some of the coefficients $C_{1^{r_1}\cdots N^{r_N}}(n)$ correspond to classical arithmetic functions. 
For instance,
$C_{1^{-1}}(n)=p(n)$
is the ordinary partition function, and
$C_{1^{-6}2^{15} 4^{-6}}(n)=r_3(n)$
gives the number of representations of $n$ as a sum of three squares.

Bringmann, Han, Heim, and Kane in \cite{BHHK2026}
explicitly compute the periodicity of the Fourier coefficients
of some holomorphic eta quotients, for example,
$
C_{1^{-2}2^{3}4^{2}}(n)>0
$
and
\[
\operatorname{sgn}\bigl(C_{1^{4}2^{2}4^{-2}}(n)\bigr)
=
\begin{cases}
1 & \text{if } n \equiv 0,3,7 \pmod{8},\\
-1 & \text{if } n \equiv 1,4,5 \pmod{8},\\
0 & \text{if } n \equiv 2 \pmod{8}.
\end{cases}
\]
In \cite{BHHK2026}, 
the signs of the Fourier coefficients of eta quotients
are computed individually.
In contrast, here we use eta quotients to construct a basis,
thereby obtaining necessary and sufficient conditions
for the periodicity of modular forms 
of a given weight, level, and quadratic character.
For a single space, one can determine the periodicity
of several dozen eta quotients.
For instance, \(C_{1^{-2}2^{3}4^{2}}(n)\) appears in
Table~\ref{tab:eta_periodic_32_2}, and
\(C_{1^{4}2^{2}4^{-2}}(n)\) appears in
Table~\ref{tab:eta_periodic_16}.

We compute the structure of $\mathcal{M}_{\mathbb{Z}/2}^{\mathrm{quad}}(\varGamma_0(N))$ for $N = 8, 12, 16, 20, 32$.
\begin{theorem}\label{tho:1}
	For $N=8$,
	\begin{align*}
		\mathcal{M}_{\mathbb{Z}/2}^{\mathrm{quad}}(\varGamma_0(8))
		=\mathbb{C}\!\left[\frac{\eta(2z)^5}{\eta(z)^2\eta(4z)^2},
		\frac{\eta(4z)^5}{\eta(2z)^2\eta(8z)^2}\right]
    	\cong \mathbb{C}[x_1,x_2],
	\end{align*}
	\begin{align*}
		\mathcal{S}_{\mathbb{Z}/2}^{\mathrm{quad}}(\varGamma_0(8))
		=\eta(z)^2\eta(2z)\eta(4z)\eta(8z)^2\cdot
		\mathbb{C}\!\left[\frac{\eta(2z)^5}{\eta(z)^2\eta(4z)^2},
		\frac{\eta(4z)^5}{\eta(2z)^2\eta(8z)^2}\right],
	\end{align*}
	where
	$\displaystyle
	\frac{\eta(2z)^5}{\eta(z)^2\eta(4z)^2} \mapsto x_1,
	\frac{\eta(4z)^5}{\eta(2z)^2\eta(8z)^2} \mapsto x_2,
	$
	with $\deg(x_1)=\deg(x_2)=1$.
	The eta quotient $\eta(z)^2 \eta(2z) \eta(4z) \eta(8z)^2$ 
	corresponds to $\displaystyle \frac{1}{4}(-2x_1x_2^5 + 3x_1^3x_2^3 - x_1^5x_2)$. 
\end{theorem}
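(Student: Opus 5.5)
The plan is to compare dimensions degree by degree. Write $\theta_1=\frac{\eta(2z)^5}{\eta(z)^2\eta(4z)^2}=\theta(z)$ and $\theta_2=\frac{\eta(4z)^5}{\eta(2z)^2\eta(8z)^2}=\theta(2z)$. Both are weight $\tfrac12$ forms in $\mathcal{M}_{1/2}^{\mathrm{quad}}(\varGamma_0(8))$: $\theta_1$ lies in $\mathcal{M}_{1/2}(\varGamma_0(4))$, and $\theta_2$ has character $\chi_8$ (or trivial, depending on convention). Every monomial $\theta_1^a\theta_2^b$ with $a+b=m$ then lies in $\mathcal{M}_{m/2}^{\mathrm{quad}}(\varGamma_0(8))$, so $\mathbb{C}[\theta_1,\theta_2]\subseteq\mathcal{M}^{\mathrm{quad}}_{\mathbb{Z}/2}(\varGamma_0(8))$.

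First I show that $\theta_1,\theta_2$ are algebraically independent. Since the algebra is graded, it suffices to show that the $m+1$ monomials of degree $m$ are linearly independent. The ratio $\theta_2/\theta_1$ is a nonconstant meromorphic function: $\theta_1=1+2q+\cdots$ and $\theta_2=1+2q^2+\cdots$. Hence a homogeneous relation $\sum c_a\theta_1^a\theta_2^{m-a}=0$ would force $\theta_2/\theta_1$ to satisfy a polynomial equation and so be constant. This is a contradiction, so $\dim$ of the degree-$m$ piece of $\mathbb{C}[x_1,x_2]$ is $m+1$.

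Next comes the upper bound $\dim\mathcal{M}_{m/2}^{\mathrm{quad}}(\varGamma_0(8))\le m+1$. The index of $\varGamma_0(8)$ is $12$, so by the valence formula (Sturm bound) a nonzero form of weight $k$ has total weighted order of vanishing $k\cdot 12/12=k$ over the cusps. One can sharpen this by working on $\varGamma_1(8)$, or simply by noting that a form of weight $m/2$ is determined by $q$-coefficients up to order about $m$ at $\infty$. A cleaner route is to use the standard dimension formulas (Cohen–Oesterlé for half-integral weight, and Riemann–Roch for integral weight). For $k\ge 2$ these give the sum over quadratic characters mod $8$ (trivial, $\chi_{-4}$, $\chi_8$, $\chi_{-8}$, with the appropriate parity for each weight) as $2k+1=m+1$. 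The small weights $k=\tfrac12,1,\tfrac32$ are handled by the Serre–Stark theorem and direct small computations. This equality of dimensions finishes the first identity.

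For cusp forms, $F=\eta(z)^2\eta(2z)\eta(4z)\eta(8z)^2$ has weight $3$. The order at each of the four cusps $\infty,0,\tfrac12,\tfrac14$ of $\varGamma_0(8)$ is computed with Ligozat's formula and equals exactly $1$ in the local parameter at each cusp. The total of these orders, $4$, matches the weight-$3$ count $3\cdot12/12=3$ only after the weighting by cusp widths is accounted for, so this check must be done carefully. Once it holds, $F$ vanishes simply at every cusp and nowhere on $\mathcal{H}$. So for any cusp form $g$, $g/F$ is holomorphic at the cusps and on $\mathcal{H}$, and therefore lies in $\mathcal{M}^{\mathrm{quad}}$ of weight $k-3$. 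This gives the second identity. The explicit expression $\tfrac14(-2x_1x_2^5+3x_1^3x_2^3-x_1^5x_2)$ is then verified by comparing $q$-expansions up to the Sturm bound for weight $3$ on $\varGamma_0(8)$, roughly $4$ terms.

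The main obstacle is the dimension bookkeeping in half-integral weight: summing over the right characters, using Kohnen's convention for half-integral weight Nebentypus, and treating weights $\tfrac12$ and $1$, where the general formulas fail.
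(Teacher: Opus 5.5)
Your treatment of the first identity is the paper's argument: algebraic independence from the nonconstant ratio $\theta_2/\theta_1$, then the count $\dim\mathcal{M}_k^{\mathrm{quad}}(\varGamma_0(8))=2k+1$ from Cohen--Oesterl\'e, Serre--Stark and the weight-$1$ data.

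The cusp-form part, however, contains a genuine error. The orders of $F=\eta(z)^2\eta(2z)\eta(4z)\eta(8z)^2$ are not all $1$. Ligozat's formula already measures order in the local parameter $e^{2\pi i z/h}$ at a cusp of width $h$. It gives orders $1,\tfrac12,\tfrac12,1$ at the cusps $0,\tfrac12,\tfrac14,\infty$ (widths $8,2,1,1$). These sum to $3=3\cdot 12/12$, exactly as the valence formula requires. The discrepancy $4\neq 3$ you noticed is therefore a real contradiction, not something that disappears after ``weighting by cusp widths''.

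Fractional orders are forced here by the characters and the theta multiplier. For example, $\theta_1$ has order $\tfrac12$ at $\tfrac12$, and $\theta_2$ has order $\tfrac12$ at $\tfrac14$. Moreover $F=-\tfrac14\theta_1\theta_2(\theta_1^2-\theta_2^2)(\theta_1^2-2\theta_2^2)$ is itself a cusp form of order $\tfrac12$ at those two cusps. So the step ``$g$ is a cusp form and $F$ vanishes simply at every cusp, hence $g/F$ is holomorphic'' is unjustified; with your claimed orders it would already fail for $g=F$. What you actually need is $\operatorname{ord}_c(g)\geq\operatorname{ord}_c(F)$ at every cusp $c$. That requires control over which fractional orders can occur in every weight and every quadratic character, and your proposal does not supply it.

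There are two ways to close the gap.

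\emph{The paper's route.} We have $F\cdot\mathcal{M}_{k-3}^{\mathrm{quad}}(\varGamma_0(8))\subseteq\mathcal{S}_k^{\mathrm{quad}}(\varGamma_0(8))$, and multiplication by $F$ is injective. The same dimension formulas you already invoke give $\dim\mathcal{S}_k^{\mathrm{quad}}(\varGamma_0(8))=2k-5=\dim\mathcal{M}_{k-3}^{\mathrm{quad}}(\varGamma_0(8))$ for $k\geq 3$, and $0$ for $k<3$. Hence the inclusion is an equality.

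\emph{Repairing your division argument.} Use the first identity. The forms $\theta_1,\theta_2$ have order $0$ at $0$ and $\infty$, and orders in $\{0,\tfrac12\}$ at $\tfrac12$ and $\tfrac14$. So every element of $\mathbb{C}[\theta_1,\theta_2]=\mathcal{M}^{\mathrm{quad}}_{\mathbb{Z}/2}(\varGamma_0(8))$ has expansions in integral powers of the local parameter at $0$ and $\infty$, and in powers from $\tfrac12\mathbb{Z}$ at $\tfrac12$ and $\tfrac14$. Hence every nonzero cusp form has orders at least $1,\tfrac12,\tfrac12,1$, which are exactly the orders of $F$. Only with this in hand is $g/F$ holomorphic at the cusps.
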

\begin{theorem}\label{tho:2}
	For $N = 12$,
	\begin{align*}
	\mathcal{M}_{\mathbb{Z}/2}^{\mathrm{quad}}(\varGamma_0(12))
	&= \mathbb{C}\!\left[
	\frac{\eta(2z)^5}{\eta(z)^2\eta(4z)^2},\,
	\frac{\eta(6z)^5}{\eta(3z)^2\eta(12z)^2},\,
	\frac{\eta(4z)^2\eta(12z)^2}{\eta(2z)\eta(6z)},\,
	\frac{\eta(2z)\eta(12z)^6}{\eta(4z)^2\eta(6z)^3}
	\right] \\
	&\cong \mathbb{C}[x_1,x_2,x_3,x_4]/I_{12},
	\end{align*}
	\begin{align*}
		\mathcal{S}_{\mathbb{Z}/2}^{\mathrm{quad}}(\varGamma_0(12))
		\cong J_{12}/I_{12},
	\end{align*}
	where the ideal $I_{12}$ is generated by
	\begin{align*}
	&x_1^3 - x_1 x_2^2 - 4x_2x_3 - 12x_2x_4, \;
	x_1^2x_2 - x_2^3 - 4x_1x_3 + 4x_1x_4, \\
	&x_1^4 - 2x_1^2x_2^2 + x_2^4 - 16x_3^2 - 32x_3x_4 + 48x_4^2,
	\end{align*}
	and the ideal $J_{12}$ is generated by $I_{12}$ and 
	\begin{align*}
	x_2^3x_3-4x_1x_3x_4,\;x_1x_2^2x_3-4x_1x_2^2x_4+8x_2x_3x_4-12x_2x_4^2.
	\end{align*}
	The isomorphism is given by
	\begin{align*}
	\frac{\eta(2z)^5}{\eta(z)^2\eta(4z)^2} \mapsto x_1, \;
	\frac{\eta(6z)^5}{\eta(3z)^2\eta(12z)^2} \mapsto x_2, \;
	\frac{\eta(4z)^2\eta(12z)^2}{\eta(2z)\eta(6z)} \mapsto x_3, \;
	\frac{\eta(2z)\eta(12z)^6}{\eta(4z)^2\eta(6z)^3} \mapsto x_4,
	\end{align*}
	with $\deg(x_1) = \deg(x_2) = 1$ and $\deg(x_3) = \deg(x_4) = 2$.
	Using the isomorphism,
	\begin{align*}
	x_2^3x_3-4x_1x_3x_4 
	&\mapsto \frac{\eta(2z)^3\eta(3z)^2\eta(12z)^2}{\eta(6z)^2},\\
	x_1x_2^2x_3-4x_1x_2^2x_4+8x_2x_3x_4-12x_2x_4^2 
	&\mapsto \frac{\eta(z)^2\eta(4z)^2\eta(6z)^3}{\eta(2z)^2}.
	\end{align*}
\end{theorem}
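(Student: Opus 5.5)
The plan is to identify the four eta quotients as modular forms, compare dimensions weight by weight using a Hilbert series, and settle the relations with the Sturm bound.

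\textbf{Step 1: the generators are modular forms.} Using the standard criteria (Gordon--Hughes--Newman for the transformation law and character, Ligozat's formula for the orders at cusps), I would check that each $x_i$ lies in $\mathcal{M}_k(\varGamma_0(12),\chi)$ for some quadratic $\chi$:
\begin{itemize}
\item $x_1, x_2$ have weight $\frac12$, being $\theta(z)$ and $\theta(3z)$;
\item $x_3, x_4$ have weight $1$, with characters among $\chi_{-3},\chi_{-4}$ (to be read off).
\end{itemize}
Holomorphy at all six cusps $1,1/2,1/3,1/4,1/6,1/12$ follows from Ligozat. This shows that $\mathbb{C}[x_1,\dots,x_4]$ maps into $\mathcal{M}^{\mathrm{quad}}_{\mathbb{Z}/2}(\varGamma_0(12))$.

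\textbf{Step 2: the relations hold.} Each generator of $I_{12}$ is homogeneous, of weight $\frac32$ or $2$. Within each term the characters must agree; for example $x_1^3$ and $x_2x_3$ must share a character, which fixes the character of $x_3$. Given this, each relation is an identity in a fixed space $\mathcal{M}_k(\varGamma_0(12),\chi)$. The Sturm bound there is $\frac{k}{12}[\mathrm{SL}_2(\mathbb{Z}):\varGamma_0(12)]=2k$, so checking a handful of $q$-coefficients proves the identity. The same method proves the two displayed identifications of elements of $J_{12}$ with the eta quotients $\eta(2z)^3\eta(3z)^2\eta(12z)^2/\eta(6z)^2$ and $\eta(z)^2\eta(4z)^2\eta(6z)^3/\eta(2z)^2$. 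Ligozat shows these two vanish at every cusp, so they are cusp forms.

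\textbf{Step 3: surjectivity and injectivity.} I would compute a Gröbner basis of $I_{12}$ (in grevlex, say) and read off the Hilbert series of $A=\mathbb{C}[x_1,\dots,x_4]/I_{12}$. The three generators should form a complete intersection in the sense needed: $A$ is $2$-dimensional, with $x_1,x_2$ a regular sequence and $A$ a free $\mathbb{C}[x_1,x_2]$-module. The Hilbert series should then be
$$\frac{1+2t^2+t^4\,(\text{or similar})}{(1-t)^2}.$$
Independently, I would sum $\dim \mathcal{M}_k(\varGamma_0(12),\chi)$ over the four quadratic characters mod $12$ that are admissible in weight $k$:
\begin{itemize}
\item for integral $k$, use the Cohen--Oesterlé formula;
\item for half-integral $k$, use the formula of Cohen--Oesterlé and Serre--Stark, where weight $\frac12$ is spanned by $\theta(z)$ and $\theta(3z)$.
\end{itemize}
I expect $\dim \mathcal{M}^{\mathrm{quad}}_{k}=4k$ for $k\ge1$, equivalently $2n$ in degree $n=2k\ge2$, matching $A$.

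Next I would show that the monomial basis of $A_n$ (standard monomials of the Gröbner basis) maps to linearly independent forms. Since it is a monomial-times-$\mathbb{C}[x_1,x_2]$ description, it suffices to check that the module generators $1,x_3,x_4,x_3x_4$ (or whatever the standard set is) are independent over the image of $\mathbb{C}[x_1,x_2]$. I would do this by comparing leading $q$-exponents within each character component, or via a determinant of $q$-coefficients in the relevant low weights. Equal dimensions in every degree then give both surjectivity and $\ker=I_{12}$.

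\textbf{Step 4: cusp forms.} For $\mathcal{S}$, the dimension of cusp forms is total dimension minus the number of regular cusps for the given character and weight. This should equal $\dim A_n-\dim(J_{12}/I_{12})_n$, computed from a Gröbner basis of $J_{12}$. Combined with Step 2 (both new generators are cusp forms) and the injectivity already proved, this gives $\mathcal{S}\cong J_{12}/I_{12}$.

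\textbf{Main obstacle.} I expect the hardest step to be the exact dimension bookkeeping: the half-integral weight dimension formulas for each character, and the cusp regularity counts for odd or half-integral weight, must match the Hilbert series in all degrees, including the small exceptional weights $\frac12,1,\frac32$. Those low weights I would verify by hand using Serre--Stark and explicit bases.
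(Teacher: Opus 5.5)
Your skeleton matches the paper's proof: eta quotients are forms by Gordon--Hughes--Newman and Ligozat, the relations and the two cusp-form identities follow from Sturm, and then you take Gr\"{o}bner standard monomials, prove their images independent, and count dimensions. The gap is in Step~3. The structure you predict there is false, and your independence argument depends on it.

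The ring $A=\mathbb{C}[x_1,\dots,x_4]/I_{12}$ is not even finite over $\mathbb{C}[x_1,x_2]$. Setting $x_1=x_2=0$ kills both cubic generators and turns the quartic one into $-16(x_3+3x_4)(x_3-x_4)$. Hence $A/(x_1,x_2)\cong\mathbb{C}[x_3,x_4]/\bigl((x_3+3x_4)(x_3-x_4)\bigr)$, which is infinite-dimensional. Analytically, $\theta(z)$ and $\theta(3z)$ both vanish at the cusps $1/2$ and $1/6$. So $x_1,x_2$ is neither a regular sequence nor a system of parameters. There is no finite generating set over $\mathbb{C}[x_1,x_2]$ to test, and the reduction to a finite low-weight determinant collapses. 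Comparing leading $q$-exponents also fails in this framing, since $f_1$ and $f_2$ both have order $0$ at $\infty$.

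Your numerical predictions are off too. The Hilbert series is $(1+t+2t^2)/\bigl((1-t)(1-t^2)\bigr)$, which is not of the form $P(t)/(1-t)^2$. Thus $\dim A_n=2n+1$ for even $n$ and $2n$ for odd $n$. This matches $\dim\mathcal{M}_k^{\mathrm{quad}}(\varGamma_0(12))=4k+1$ for $k\in\mathbb{Z}$ and $4k$ for $k\in\frac12+\mathbb{Z}$, not your ``$4k$ for $k\ge1$''. Already in weight $1$ you have $\theta(z)^2,\theta(3z)^2$ with character $\chi_{-4}$ plus a $3$-dimensional $\chi_{-3}$-space, giving $5$.

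What works, and what the paper does, is to use the actual standard monomials. In weighted grevlex with $x_1>x_2>x_3>x_4$, the Gr\"{o}bner basis consists of the two cubic generators together with $x_1^2x_3-x_2^2x_3-x_1^2x_4-3x_2^2x_4$ and $x_3^2-x_1x_2x_4+2x_3x_4-3x_4^2$. The leading ideal is $\langle x_1^3,x_1^2x_2,x_1^2x_3,x_3^2\rangle$. So the standard monomials are $x_1^{\epsilon_1}x_3^{\epsilon_3}x_2^{a}x_4^{b}$ with $\epsilon_i\in\{0,1\}$, together with $x_1^2x_4^{b}$. The right polynomial subring is $\mathbb{C}[x_2,x_4]$, over which $A$ is free on $1,x_1,x_3,x_1^2$.

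For independence, split the weight-$k$ standard monomials into the two character classes, determined by the parity of $\alpha_2+\alpha_3+\alpha_4$. Within a class, $\operatorname{ord}_\infty=\alpha_3+2\alpha_4$ determines the standard monomial, with one exception for integral $k$. The pair $f_1^2f_4^{k-1}$, $f_2^2f_4^{k-1}$ shares the largest order in its class. All lower-order coefficients are forced to vanish first, and the pair is then separated because $f_1^2$ and $f_2^2$ are independent. Equality of dimensions then gives the isomorphism, and your Step~4 goes through. However, take $\dim\mathcal{S}_k$ directly from Cohen--Oesterl\'e and Serre--Stark: it is $0$ for $k\le2$, then $4k-9$ or $4k-8$. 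Your ``total minus regular cusps'' rule is wrong in weight $2$ with trivial character, where the Eisenstein space has dimension $5$ for $6$ cusps, and in weights $\le\frac32$.
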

\begin{theorem}\label{tho:3}
	For $N=16$,
	\begin{align*}
		\mathcal{M}_{\mathbb{Z}/2}^{\mathrm{quad}}(\varGamma_0(16))
		&=\mathbb{C}\!\left[\frac{\eta(2z)^5}{\eta(z)^2\eta(4z)^2},
		\frac{\eta(4z)^5}{\eta(2z)^2\eta(8z)^2},\frac{\eta(16z)^2}{\eta(8z)}\right]\\
    	&\cong \mathbb{C}[x_1,x_2,x_3]/\langle x_1^2-4x_1x_3-x_2^2+8x_3^2\rangle,
	\end{align*}
	\begin{align*}
	\mathcal{S}_{\mathbb{Z}/2}^{\mathrm{quad}}(\varGamma_0(16))
	=\eta(2z)^2\eta(4z)\eta(8z)^2\cdot 
	\mathbb{C}\!\left[\frac{\eta(2z)^5}{\eta(z)^2\eta(4z)^2},
	\frac{\eta(4z)^5}{\eta(2z)^2\eta(8z)^2},\frac{\eta(16z)^2}{\eta(8z)}\right],
	\end{align*}
	where
	$\displaystyle
	\frac{\eta(2z)^5}{\eta(z)^2\eta(4z)^2} \mapsto x_1,
	\frac{\eta(4z)^5}{\eta(2z)^2\eta(8z)^2} \mapsto x_2,
	\frac{\eta(16z)^2}{\eta(8z)} \mapsto x_3,$
	with $\deg(x_i)=1$. 
	The eta quotient $\eta(2z)^2\eta(4z)\eta(8z)^2$ 
	corresponds to $\displaystyle x_1x_2x_3(x_1-2x_3)(x_1-4x_4)$.
\end{theorem}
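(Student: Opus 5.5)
Our plan for Theorem~\ref{tho:3} is a dimension count combined with an analysis of the relation ideal. First we check, via the standard criteria for eta quotients (Gordon--Hughes--Newman: the conditions $\sum \delta r_\delta \equiv 0$ and $\sum (N/\delta) r_\delta \equiv 0 \pmod{24}$, plus the order-at-cusps formula of Ligozat), that each generator lies in the claimed space. Thus $x_1=\eta(2z)^5/(\eta(z)^2\eta(4z)^2)$ (the theta function $\theta(z)$), $x_2=\theta(2z)$, and $x_3=\eta(16z)^2/\eta(8z)=\sum_{n\ge0}q^{(2n+1)^2}$ are holomorphic forms of weight $\tfrac12$ on $\varGamma_0(16)$ with some quadratic character (for half-integral weight, in the sense of the theta multiplier twisted by a quadratic character). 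Likewise $\eta(2z)^2\eta(4z)\eta(8z)^2$ is a cusp form of weight $\tfrac52$ that vanishes at every cusp of $\varGamma_0(16)$.

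Next we verify the quadratic relation. From $\theta(z)=\theta(4z)+2\sum_{n\ge0} q^{(2n+1)^2/\,\cdot}$-type splittings we get $x_1=\theta(4z)+2x_3'$, where $x_3'=\sum_{n} q^{(2n+1)^2}$ with the appropriate scaling, so $x_3'$ is identified with $x_3$. Then $x_2^2=\theta(2z)^2=\theta(4z)^2+$ (odd part)$^2$. Combining with the Jacobi-type identity $\theta(z)^2-\theta(2z)^2$ expressed in $\theta(4z)$ and $x_3$ should yield $x_1^2-4x_1x_3-x_2^2+8x_3^2=0$. To be safe we instead verify this by Sturm's bound: the left side lies in $\mathcal M_1(\varGamma_0(16),\chi)$, whose Sturm bound is $\tfrac{1}{12}\cdot 24=2$, so comparing a few coefficients suffices.

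For the main step, let $A=\mathbb C[x_1,x_2,x_3]/\langle F\rangle$ with $F$ the quadric above. Since $F$ is irreducible and nonzero, $A$ has Hilbert function $\dim A_d=\binom{d+2}{2}-\binom{d}{2}=2d+1$. We show that the map $A\to\mathcal M^{\mathrm{quad}}_{\mathbb Z/2}(\varGamma_0(16))$ is injective. The kernel is a homogeneous prime ideal containing $F$ (the image is a domain), and since $x_1,x_2,x_3$ are algebraically dependent with transcendence degree $2$ (the function field of the modular curve is $1$-dimensional), the kernel has height $1$. A height-one prime containing an irreducible $F$ in a UFD equals $\langle F\rangle$. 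For surjectivity we compare dimensions degree by degree: $\dim\mathcal M_{d/2}^{\mathrm{quad}}(\varGamma_0(16))$ summed over the quadratic characters mod $16$ (trivial, $\chi_{-4}$, $\chi_8$, $\chi_{-8}$). These sums can be computed by Cohen--Oesterlé's dimension formula for both integral and half-integral weight, and we expect the total to be $2d+1$ for all $d\ge1$ (with $\dim=1$ at $d=0$). This computation is the main obstacle: low weights $\tfrac12,1,\tfrac32$ fall outside the generic formula and require Serre--Stark for weight $\tfrac12$ and separate handling for weight $1$. The regular cusps of $\varGamma_0(16)$ for each character must also be tracked carefully.

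Finally, for the cusp space, let $g=\eta(2z)^2\eta(4z)\eta(8z)^2$. It has order exactly the minimal positive value at each of the $6$ cusps, so for any cusp form $h$ of weight $k$ with quadratic character, $h/g$ is holomorphic on $\mathcal H$ (as $g$ has no zeros there) and at the cusps. Hence $h/g\in\mathcal M^{\mathrm{quad}}_{k-5/2}$, giving $\mathcal S=g\cdot\mathcal M$. To express $g$ as a polynomial, we match coefficients against the degree-$5$ monomials, again certified by the Sturm bound.
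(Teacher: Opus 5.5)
Your argument is correct, but it differs from the paper's at two points. For injectivity, the paper computes a Gr\"obner basis with $\operatorname{LT}(I_{16})=\langle x_1^2\rangle$. It sorts the standard monomials $x_1^{\epsilon}x_2^{a}x_3^{b}$ ($\epsilon\in\{0,1\}$) by character, and notes that within one space $\operatorname{ord}_\infty=b$ determines the monomial. You instead use that $F=(x_1-2x_3)^2+4x_3^2-x_2^2$ is a rank-$3$ quadric, hence irreducible, and that the kernel is a height-one prime, so it equals $\langle F\rangle$.

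Your version is cleaner and needs no $q$-expansions. However, $\operatorname{trdeg}\geqslant 2$ needs its own line, e.g. ``$f_3/f_1$ is nonconstant''; the function field of the curve only gives the upper bound, which $F$ already supplies. The paper's version directly produces the monomial bases reused in Theorem~\ref{tho:8}; you recover them by reducing modulo $F$, which is monic in $x_1$.

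Both routes then rely on $\dim\mathcal{M}_k^{\mathrm{quad}}(\varGamma_0(16))=4k+1$, which the paper also only asserts. It does hold: in weight $k$ exactly two characters contribute, giving $2k+1$ (conductor dividing $4$) and $2k$ (conductor $8$). Your garbled heuristic for the relation is just $\theta(2z)^2=\theta(4z)^2+4x_3^2$ with $\theta(4z)=x_1-2x_3$.

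For cusp forms, the paper compares $\dim\mathcal{S}_k^{\mathrm{quad}}=4k-9$ with $\dim g\cdot\mathcal{M}^{\mathrm{quad}}_{k-5/2}$. Your division by $g$ avoids the cusp-form dimension formula, but ``minimal positive value'' is not quite right. The orders of $g$ at $\infty,0,\tfrac12,\tfrac14,\tfrac34,\tfrac18$ are $1,1,1,\tfrac12,\tfrac12,1$. So at $\tfrac14,\tfrac34$ the order of $g$ is below the least positive order of cusp forms whose multiplier is regular there.

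What you actually need is that, for every weight and quadratic character, the cusp parameter is $0$ at $\infty,0,\tfrac12,\tfrac18$ and lies in $\{0,\tfrac12\}$ at $\tfrac14,\tfrac34$. Then every cusp form has order at least that of $g$ at every cusp. This holds because each such multiplier is that of some $x_3^ax_2^b$. Here $x_3$ vanishes only at $\infty$ (order $1$), and $x_2$ vanishes only at $\tfrac14,\tfrac34$ (order $\tfrac12$ each).

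Finally, the $x_4$ in the stated factorization is a typo for $x_3$. In fact $g=x_1x_2x_3(x_1-2x_3)(x_1-4x_3)$ is an identity of eta quotients, since $x_1-2x_3=\eta(8z)^5/(\eta(4z)^2\eta(16z)^2)$ and $x_1-4x_3=\eta(z)^2/\eta(2z)$. The five factors vanish only at $\tfrac12$; $\tfrac14,\tfrac34$; $\infty$; $\tfrac18$; $0$ respectively, which is where the cusp orders of $g$ come from.
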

\begin{theorem}\label{tho:4}
For $N = 20$,
\begin{align*}
\mathcal{M}_{\mathbb{Z}/2}^{\mathrm{quad}}(\varGamma_0(20))
\cong \mathbb{C}[x_1,x_2,\cdots,x_7]/I_{20},
\end{align*}
where the ideal $I_{20}$ is generated by the following $15$ polynomials
{\small
$$
\begin{array}{ll}
\multicolumn{2}{l}{x_{1}^{4} - 6 x_{1}^{2} x_{2}^{2} + 5 x_{2}^{4} + 16 x_{1} x_{2} x_{3} - 16 x_{3}^{2},}\\
x_{1}^{2} x_{2} x_{3} - x_{2}^{3} x_{3} - 4 x_{1} x_{3}^{2} - 16 x_{1} x_{4} + 64 x_{1} x_{5}, &x_{1} x_{4} - 4 x_{1} x_{5} - x_{2} x_{6} + 4 x_{2} x_{7},\\ 
x_{1}^{4} x_{2} - x_{2}^{5} - 6 x_{1}^{3} x_{3} - 2 x_{1} x_{2}^{2} x_{3} + 8 x_{2} x_{3}^{2} - 96 x_{2} x_{4}, &x_{2} x_{4} - x_{1} x_{6},\\
x_{2}^{2} x_{5} - x_{1} x_{2} x_{7} - x_{3} x_{6}, & x_{1}^{2} x_{4} - x_{2}^{2} x_{4} - 3 x_{1}^{2} x_{5} - x_{2}^{2} x_{5} + 4 x_{3} x_{7},\\ 
x_{1} x_{2} x_{4} - 3 x_{1} x_{2} x_{5} - x_{2}^{2} x_{6} - x_{2}^{2} x_{7} - x_{3} x_{4} + 4 x_{3} x_{5}, & x_{1} x_{2} x_{5} - x_{1}^{2} x_{7} - x_{3} x_{4},\\
\multicolumn{2}{l}{x_{5}^{2} - x_{6} x_{7} - x_{7}^{2},}\\ 
\multicolumn{2}{l}{4 x_{1}^{2} x_{2}^{2} x_{5} - x_{1}^{3} x_{2} x_{6} + x_{1} x_{2}^{3} x_{6} - 3 x_{1} x_{2}^{3} x_{7} - x_{1} x_{2} x_{3} x_{5} - x_{3}^{2} x_{5} - x_{4}^{2} + 16 x_{5}^{2} - 4 x_{6}^{2} - 4 x_{6} x_{7},}\\ 
\multicolumn{2}{l}{7 x_{1}^{2} x_{2}^{2} x_{5} + x_{2}^{4} x_{5} - 2 x_{1}^{3} x_{2} x_{6} + 2 x_{1} x_{2}^{3} x_{6} - 3 x_{1} x_{2}^{3} x_{7} - x_{1} x_{2} x_{3} x_{5} - 3 x_{2}^{2} x_{3} x_{7} - 2 x_{3}^{2} x_{5} - x_{4}^{2} + 16 x_{5}^{2} - 5 x_{6}^{2},}\\ 
\multicolumn{2}{l}{3 x_{1}^{2} x_{2}^{2} x_{5} + x_{2}^{4} x_{5} - x_{1}^{3} x_{2} x_{6} + x_{1} x_{2}^{3} x_{6} - x_{1}^{2} x_{3} x_{7} - 3 x_{3}^{2} x_{5} + x_{4}^{2} - 4 x_{4} x_{5},}\\ 
\multicolumn{2}{l}{3 x_{1}^{3} x_{2} x_{5} + 5 x_{1} x_{2}^{3} x_{5} - x_{1}^{4} x_{6} + x_{2}^{4} x_{6} - x_{1}^{2} x_{2}^{2} x_{7} - 3 x_{2}^{2} x_{3} x_{5} - 3 x_{1} x_{2} x_{3} x_{7} + 4 x_{5} x_{6} + 4 x_{4} x_{7} - 16 x_{5} x_{7},}\\ 
\multicolumn{2}{l}{3 x_{1}^{3} x_{2} x_{5} + 5 x_{1} x_{2}^{3} x_{5} - x_{1}^{4} x_{6} + x_{2}^{4} x_{6} - 3 x_{2}^{4} x_{7} - x_{1}^{2} x_{3} x_{5} - x_{1} x_{2} x_{3} x_{7} - 3 x_{3}^{2} x_{7} - x_{4} x_{6}.}
\end{array}
$$
}
The isomorphism is given by
\begin{gather*}
	\frac{\eta(2z)^5}{\eta(z)^2\eta(4z)^2} \mapsto x_1, \;
	\frac{\eta(10z)^5}{\eta(5z)^2\eta(20z)^2} \mapsto x_2, \;
	\frac{\eta(z)\eta(2z)\eta(10z)\eta(20z)}{\eta(4z)\eta(5z)} \mapsto x_3,
	\allowdisplaybreaks \\
	\frac{\eta(2z)^5\eta(5z)^2\eta(20z)^7}{\eta(z)^2\eta(4z)^3\eta(10z)^5} \mapsto x_4,\;
	\frac{\eta(20z)^8}{\eta(10z)^4} \mapsto x_5, \;
	\frac{\eta(20z)^5}{\eta(4z)} \mapsto x_6, \;
	\frac{\eta(2z)\eta(20z)^{10}}{\eta(4z)^2\eta(10z)^5} \mapsto x_7,
\end{gather*}
with $\deg(x_1) = \deg(x_2) = 1$, $\deg(x_3)=2$ and $\deg(x_4)=\deg(x_5)=\deg(x_6)=\deg(x_7)=4$.
\end{theorem}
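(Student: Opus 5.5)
The plan has three parts: show the seven generators are holomorphic forms in the right graded pieces, match dimensions degree by degree, and check the $15$ relations by Sturm bounds.

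\textbf{Step 1: the generators.} For each of the seven eta quotients attached to $x_1,\dots,x_7$, I verify that it lies in $\mathcal{M}_{\deg(x_i)/2}^{\mathrm{quad}}(\varGamma_0(20))$ using the standard Gordon--Hughes--Newman criteria. Concretely, I check three things:
\begin{itemize}
\item $\sum_{\delta}\delta r_\delta\equiv 0$ and $\sum_{\delta}(20/\delta)r_\delta\equiv 0 \pmod{24}$;
\item holomorphy at every cusp of $\varGamma_0(20)$, i.e.\ the Ligozat order $\frac{20}{24}\sum_{\delta\mid 20}\frac{\gcd(c,\delta)^2 r_\delta}{\gcd(c,20/c)\,c\,\delta}\ge 0$ for each $c\mid 20$;
\item the character is quadratic, namely $\chi(d)=\left(\frac{(-1)^k\prod_\delta\delta^{r_\delta}}{d}\right)$, with the usual theta-multiplier modification in half-integral weight.
\end{itemize}
This gives a graded homomorphism $\phi:\mathbb{C}[x_1,\dots,x_7]\to\mathcal{M}_{\mathbb{Z}/2}^{\mathrm{quad}}(\varGamma_0(20))$.

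\textbf{Step 2: the relations.} Each of the $15$ listed polynomials is homogeneous of some degree $d\le 8$. Since the space it lands in is finite dimensional, I prove it lies in $\ker\phi$ by comparing $q$-expansions up to the Sturm bound. For weight $d/2$ on $\varGamma_0(20)$ the index is $36$, so it suffices to check up to $\frac{d}{2}\cdot\frac{36}{12}=\frac{3d}{2}$. For half-integral weight I first multiply by $\theta$ (or square) to land in integral weight.

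\textbf{Step 3: surjectivity and $\ker\phi=I_{20}$, degree by degree.} Let $D_m=\dim\mathcal{M}_{m/2}^{\mathrm{quad}}(\varGamma_0(20))$, summed over the quadratic characters mod $20$. I compute $D_m$ from the Cohen--Oesterl\'e formula, which also covers half-integral weight. With a Gröbner basis of $I_{20}$ in hand, I compute the Hilbert function $h(m)=\dim(\mathbb{C}[x]/I_{20})_m$ from the standard monomials. I then show $h(m)=D_m$ for all $m$, and that for each $m$ the images of the standard monomials of degree $m$ are linearly independent; the latter is checked by $q$-expansion matrices of full rank. Together with $I_{20}\subseteq\ker\phi$, these give $\phi$ surjective in degree $m$ with kernel exactly $(I_{20})_m$.

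To cover all $m$ rather than finitely many, I use two rational generating functions. Cohen--Oesterl\'e makes $\sum D_m t^m$ rational, since for large weight the dimension is linear in $k$ with periodic correction. The Hilbert series of $\mathbb{C}[x]/I_{20}$ is rational with numerator read off from the Gröbner basis. I show the two series are equal.

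For linear independence in every degree, I use the fact that $x_1,x_2$ (or a suitable regular sequence of low-degree generators) make the quotient a finite module. The monomial structure is then periodic: beyond some degree $m_0$, every standard monomial is a standard monomial of degree $\le m_0$ times a power of $x_1,x_2$. Since $\mathcal{M}$ is a domain-like algebra with no zero divisors among nonzero forms, independence in low degree propagates. So it suffices to verify independence up to $m_0$, which will be at most around $8$–$10$.

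\textbf{Main obstacle.} I expect the hardest step to be this last one: making the passage from finitely many checked degrees to all degrees rigorous. That needs either an explicit regular sequence, with $\mathbb{C}[x]/I_{20}$ Cohen--Macaulay of dimension $2$, or a careful freeness argument over $\mathbb{C}[x_1,x_2]$. I would first try to show $\mathcal{M}$ is a free $\mathbb{C}[x_1,x_2]$-module, using that $x_1,x_2$ have no common zeros on $X_0(20)$; this I check from the cusp orders and the valence formula. I would then exhibit explicit module generators matching the numerator of the Hilbert series.
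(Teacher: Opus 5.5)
\textbf{Gap: passing from finitely many degrees to all degrees.} Your overall architecture matches the paper's. You use the eta-quotient criteria for the generators and Sturm for $I_{20}\subseteq\ker\phi$. You take a Gr\"obner basis whose standard monomials span $\mathbb{C}[x]/I_{20}$, show their images are linearly independent, and count against the dimension formulas. The gap is in the step you flag as the main obstacle, and the mechanism you propose there fails for two reasons.

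First, $x_1=\theta(z)$ and $x_2=\theta(5z)$ \emph{do} have common zeros on $X_0(20)$. The quantities in Lemma~\ref{lem:9} show that both vanish exactly at the cusps $1/2$ and $1/10$. Worse, $\mathcal{M}_{\mathbb{Z}/2}^{\mathrm{quad}}(\varGamma_0(20))$ is not even a finite $\mathbb{C}[x_1,x_2]$-module:
\begin{itemize}
\item $x_7$ has order $6$ at $\infty$, which exhausts the valence $2\cdot 36/12$, so $x_7$ is nonzero at the cusp $1/2$.
\item Evaluation at that cusp (leading term of $(cz+d)^{-k}f(\gamma z)$ with $\gamma\infty=1/2$) is a ring homomorphism. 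It kills $x_1$ and $x_2$ but no power of $x_7$.
\item Hence $\mathcal{M}/(x_1,x_2)\mathcal{M}$ is infinite-dimensional.
\end{itemize}
The Gr\"obner data say the same thing. We have $x_1^4\in\operatorname{LT}(I_{20})$, while $x_7$ occurs in no leading term. So the standard monomials $x_7^n$ are not of the form ``low-degree standard monomial times $x_1^ax_2^b$,'' and your structural claim is false.

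Second, even with the right shape, the step ``$\mathcal{M}$ is a domain, so independence in low degree propagates'' is not valid. Being a domain lets you cancel a single nonzero factor. Independence of a family $\{s_j x^{a_j}y^{b_j}\}$ for two elements $x,y$ is a statement about the absence of syzygies, which is exactly the freeness you have not proved.

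\textbf{How the paper closes it.} The paper uses $f_7$ alone, together with orders at $\infty$. Because $x_7$ lies in no leading term, for $k>4$ the standard monomials of weight $k$ in each character component split into two pieces. One piece is $f_7\cdot A_{k-2,\cdot}$. The other is six $x_7$-free monomials $B_{k,i}$: powers of $f_2$ times $1,\,f_1f_3,\,f_3^2,\,f_1f_3^3,\,f_1f_6,\,f_5$, respectively $f_1,\,f_3,\,f_1f_3^2,\,f_3^3,\,f_6,\,f_1f_5$. The $B$-monomials have the distinct orders $0,\dots,5$ at $\infty$, while everything in $f_7A_{k-2,\cdot}$ has order at least $6$.

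A vanishing linear combination therefore has all $B$-coefficients zero. You then cancel the single factor $f_7$ and induct down to $k\le 4$, which is checked directly on $q$-expansions. The count against $\dim\mathcal{M}_k^{\mathrm{quad}}(\varGamma_0(20))$ then gives surjectivity.

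If you want to keep a module-theoretic route, the right system of parameters is $\{x_2,x_7\}$ (or $\{x_1,x_7\}$), which genuinely have no common zeros. You would still have to prove freeness and bound the degrees of the module generators.

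\textbf{Dimension input in low weight.} Cohen--Oesterl\'e only gives $\dim\mathcal{S}_k-\dim\mathcal{M}_{2-k}$. Weights $1/2$, $1$, $3/2$ therefore need separate input: Serre--Stark for weights $1/2$ and $3/2$, and knowledge of the weight-one cusp forms of level $20$, which the paper takes from the LMFDB.
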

\begin{theorem}\label{tho:5}
	For $N=32$,
	\begin{align*}
		\mathcal{M}_{\mathbb{Z}/2}^{\mathrm{quad}}(\varGamma_0(32))
		&=\mathbb{C}\!\left[\frac{\eta(2z)^5}{\eta(z)^2\eta(4z)^2},
		\frac{\eta(4z)^5}{\eta(2z)^2\eta(8z)^2},\frac{\eta(16z)^2}{\eta(8z)},
		\frac{\eta(32z)^2}{\eta(16z)}\right]\\
    	&\cong \mathbb{C}[x_1,x_2,x_3,x_4]/\langle x_1^2-x_2^2+8x_3^2-4x_1x_3,x_3^2+2x_4^2-x_2x_4\rangle,
	\end{align*}
	\begin{align*}
	\mathcal{S}_{\mathbb{Z}/2}^{\mathrm{quad}}(\varGamma_0(32))
	=\eta(4z)^2\eta(8z)^2\cdot 
	\mathbb{C}\!\left[\frac{\eta(2z)^5}{\eta(z)^2\eta(4z)^2},
	\frac{\eta(4z)^5}{\eta(2z)^2\eta(8z)^2},\frac{\eta(16z)^2}{\eta(8z)},
	\frac{\eta(32z)^2}{\eta(16z)}\right],
	\end{align*}
	where
	$\displaystyle
	\frac{\eta(2z)^5}{\eta(z)^2\eta(4z)^2} \mapsto x_1,
	\frac{\eta(4z)^5}{\eta(2z)^2\eta(8z)^2} \mapsto x_2,
	\frac{\eta(16z)^2}{\eta(8z)} \mapsto x_3,
	\frac{\eta(32z)^2}{\eta(16z)} \mapsto x_4,$
	with $\deg(x_i)=1$. 
	The eta quotient $\eta(4z)^2\eta(8z)^2$ 
	corresponds to $\displaystyle x_3(x_1-2x_3)(x_1^2+8x_3^2-4x_1x_3-4x_2x_4)$.
\end{theorem}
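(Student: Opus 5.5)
We will prove Theorem~\ref{tho:5} by a dimension comparison in each weight. Write $\theta_1,\theta_2,\theta_3,\theta_4$ for the four eta quotients in order. First, with the standard criteria of Gordon--Hughes--Newman and Ligozat for eta quotients (extended to half-integral weight via the theta multiplier), we check that each $\theta_i$ is a holomorphic modular form of weight $\frac12$ on $\varGamma_0(32)$ with a quadratic character. For $\theta_1=\theta(z)$, $\theta_2=\theta(2z)$, $\theta_3=q\prod(1-q^{16n})^2/(1-q^{8n})=\sum_{n\ge0}q^{(2n+1)^2}$ and $\theta_4=\sum_{n\ge 0} q^{2(2n+1)^2}$ this is classical: they are theta series, and at every cusp of $\varGamma_0(32)$ the Ligozat orders are nonnegative. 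We also check that $\eta(4z)^2\eta(8z)^2$ is a cusp form of weight $2$ on $\varGamma_0(32)$, with order exactly $1$ (in the local parameter) at each of the $8$ cusps.

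Next we verify the two relations in weight $1$. Write $\theta=\sum_{n\in\mathbb Z} q^{n^2}$. Then $\theta_1=\theta(z)$ and $\theta_2=\theta(2z)$. Splitting $n$ by parity gives $\theta(z)=\theta(4z)+2\theta_3'$ with $\theta_3'=\sum_{n\ \mathrm{odd}>0}q^{n^2}=\theta_3$. Using the parity decomposition $\theta(z)^2=\theta(2z)^2+4\theta_3\cdot(\ldots)$, we reduce both relations to Jacobi-type identities. Alternatively, each relation lives in a finite-dimensional space $\mathcal M_1^{\mathrm{quad}}(\varGamma_0(32))$, so the Sturm bound lets us verify it by comparing finitely many $q$-coefficients; this is the route we will actually take. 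The factorization of $\eta(4z)^2\eta(8z)^2$ is checked the same way, in weight $2$.

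The main step is surjectivity together with the claim that the ideal is exactly generated by these two relations. Let $R=\mathbb C[x_1,\dots,x_4]/I$ with $I$ the stated ideal. The two quadrics should form a regular sequence; we check this via a Gröbner basis, or by showing that the common zero locus is $0$-dimensional in $\mathbb P^3$. Then $R$ is a complete intersection with Hilbert series $(1+t)^2/(1-t)^2$, so $\dim R_d=4d$ for $d\ge1$. On the modular side, $\dim \mathcal M_{d/2}^{\mathrm{quad}}(\varGamma_0(32))$ is computed by the Cohen--Oesterlé formula, summed over the four quadratic characters modulo $32$ (those of conductor dividing $8$), adjusted by $\chi_{-4}$ for half-integral weight. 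We expect the total to be $4d$ for all $d\ge 1$, because $\varGamma_0(32)$ has genus $1$ and $8$ cusps, and the quadratic twists all reduce to $\varGamma_1$-type dimension counts linear in $k$; small weights are checked directly. It then suffices to show that $R_d\to\mathcal M_{d/2}^{\mathrm{quad}}$ is injective. Since $I$ is prime (or at least saturated), the map $R\to\mathcal M$ is an injective graded homomorphism of domains once we know that $\theta_1,\dots,\theta_4$ generate a function field of transcendence degree $2$ satisfying only these relations. Concretely, we prove injectivity by exhibiting $4d$ monomials in $R_d$, namely $x_1^a x_2^b$, $x_1^a x_2^b x_3$, $x_1^a x_2^b x_4$ and $x_1^a x_2^b x_3 x_4$ normal forms, whose $q$-expansions have distinct leading exponents or are triangular. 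This is the main obstacle: making that triangularity uniform in $d$. We would try it via orders of vanishing at the cusps $\infty$ and $0$.

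For cusp forms, multiplication by $F=\eta(4z)^2\eta(8z)^2$ maps $\mathcal M_{k-2}$ into $\mathcal S_k$. Conversely, if $g\in\mathcal S_k$, then $g/F$ is holomorphic at every cusp and nonvanishing on $\mathcal H$, because $F$ is an eta quotient with simple zeros only at the cusps. Hence $g/F\in\mathcal M_{k-2}$, giving $\mathcal S=F\cdot\mathcal M$ as stated.
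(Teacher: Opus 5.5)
Your framework is the paper's. You verify the two relations and the factorization of $\eta(4z)^2\eta(8z)^2$ by Sturm's bound, observe $\dim R_d=4d$, compare with $\dim\mathcal M^{\mathrm{quad}}_{d/2}(\varGamma_0(32))$, and reduce everything to injectivity of $R_d\to\mathcal M^{\mathrm{quad}}_{d/2}(\varGamma_0(32))$. But injectivity is the only step with real content, you leave it open, and the route you sketch for it fails.

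\textbf{Why your chosen basis does not work.} Your monomials $x_1^ax_2^b\cdot\{1,x_3,x_4,x_3x_4\}$ are a legitimate basis of $R_d$: they are the normal forms for lex with $x_4>x_3>x_1>x_2$, so $R$ is free over $\mathbb C[x_1,x_2]$. Their images, however, cannot be separated by leading exponents, because $\theta_1$ and $\theta_2$ both have order $0$ at $\infty$. Already for $d=2$, $\theta_1^2$ and $\theta_2^2$ lie in the same space $\mathcal M_1(\varGamma_0(32),\chi_{-4})$ with the same leading term $1$. In general roughly $d/2$ of the forms $\theta_1^a\theta_2^b$ collide in each character space. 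The cusp $0$ does not help either: the $d=1$ sum in Lemma~\ref{lem:9} vanishes for all four $\theta_i$, so they all have order $0$ at the cusp $0$.

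Your fallback (``$I$ is prime (or at least saturated)'') is circular as written, since ``satisfying only these relations'' is exactly what must be proved. Saturation would also not suffice. You would need $I$ to be prime and $\theta_1,\theta_2$ to be algebraically independent, and you establish neither.

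\textbf{The missing idea: use the other system of parameters.} Take grevlex with $x_1>x_2>x_3>x_4$. Then $\{x_1^2-x_2^2-4x_1x_3+8x_2x_4-16x_4^2,\ x_3^2-x_2x_4+2x_4^2\}$ is a Gr\"obner basis of $I$ with leading terms $x_1^2$ and $x_3^2$. Hence $R_d$ has basis $x_1^{\alpha_1}x_2^{\alpha_2}x_3^{\alpha_3}x_4^{\alpha_4}$ with $\alpha_1,\alpha_3\in\{0,1\}$, and the unbounded variables $x_2,x_4$ now have different orders ($0$ and $2$) at $\infty$.
\begin{itemize}
\item The image of such a monomial lies in the character space determined by the parity of $\alpha_1+\alpha_3$.
\item Its order at $\infty$ is $\operatorname{ord}_\infty=\alpha_3+2\alpha_4$.
\item In fixed degree, the pair (parity, order) determines $\alpha$ uniquely.
\end{itemize}
So within each of the two character spaces the images have pairwise distinct orders at $\infty$ and are linearly independent, uniformly in $d$. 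This is how the paper proves $\bar\phi_{32}$ injective. The character splitting is essential: for example, $x_2^d$ and $x_1x_2^{d-1}$ both have order $0$ and are separated only by their characters.

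\textbf{Minor points in the cusp-form step.} You mean that $F$, not $g/F$, is nonvanishing on $\mathcal H$. The division argument also needs every cusp to be regular for the relevant multiplier systems, so that a cusp form has order $\geq 1$ at each cusp. This holds here, since every product of the $\theta_i$ has integral order at every cusp and such products occur in each character space. The paper avoids the issue by simply comparing $\dim\mathcal S^{\mathrm{quad}}_k$ with $\dim\mathcal M^{\mathrm{quad}}_{k-2}$.
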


From the above theorems, for $N = 8, 16, 32$, 
we can determine whether the signs 
of the Fourier coefficients of eta quotients 
in $\mathcal{M}_{3/2}^{\mathrm{quad}}(\varGamma_0(N))$
and $\mathcal{M}_{2}^{\mathrm{quad}}(\varGamma_0(N))$ exhibit periodicity.
Since we study the signs of Fourier coefficients, 
all modular forms appearing hereafter are assumed to have real coefficients.
In the following, 
we exclude duplicate eta quotients, 
i.e., those satisfying $f(z) = f(mz)$.
Throughout, 
we let $\left(\frac{a}{n}\right)$ be the Kronecker symbol.
Given an integer $t$, 
let $\chi_t$ denote the corresponding Dirichlet character. 
If $t$ is a  square, then $\chi_t = \mathbf{1}$. 
If $t$ is not a square, 
let $D$ be the discriminant of the quadratic field $\mathbb{Q}(\sqrt{t})$ 
and define $\chi_t(m) = \left(\frac{D}{m}\right)$.

\begin{theorem}\label{tho:6}
	If $f(z) \in \mathcal{M}_{3/2}(\varGamma_0(2^r))$ 
	with $3 \leqslant r \leqslant 5$, 
	write $f(z) = \sum_{n=0}^{\infty} a(n) q^n$.
	When $a(0) = 0$, the sign $\operatorname{sgn}(a(n))$ is periodic. 
	Let $T$ denote its period, then $ T \mid 2^{r-1}$. 
	When $a(0) \neq 0$, the sign $\operatorname{sgn}(a(n))$ is not periodic.
	From these we know that $23$ eta quotients exhibit periodicity, 
	as listed in Table~\ref{tab:eta_periodic_32_1}.
	{\renewcommand{\arraystretch}{1.4}
	\begin{longtable}{|c|c|c|c|}
	\caption{Eta quotients with periodic sign patterns} 
	\label{tab:eta_periodic_32_1}\\[-1ex]
	\hline
	Period/sign pattern & Eta quotient & Period/sign pattern & Eta quotient\\
	\hline
	\endfirsthead  
	\multicolumn{4}{l}{continued from previous page} \\
	\hline
	Period/sign pattern & Eta quotient & Period/sign pattern & Eta quotient\\
	\hline
	\endhead
	\hline
	\multicolumn{4}{r}{{continued on next page}} \\
	\endfoot
	\hline
	\endlastfoot
	$1/$\texttt{+} & $1^{-3}2^{6}$ &
	$1/$\texttt{+} & $1^{-4}2^{9}4^{-2}$ \\ \hline
	$1/$\texttt{+} & $1^{-1} 4^{4}$ &
	$1/$\texttt{+} & $1^{-5} 2^{12} 4^{4}$ \\ \hline
	$2/$\texttt{+-} & $1^{4}2^{-3}4^{2}$ &
	$4/$\texttt{+++0} & $1^{-4}2^{10}4^{-5}8^{2}$ \\ \hline
	$4/$\texttt{++00} & $1^{-2}2^5 4^{-4}8^4$  &
	$4/$\texttt{++00} & $1^{-2} 2^{5}4^{-3}8^{1}16^2$ \\ \hline
	$4/$\texttt{++00} & $1^{-2} 2^{5}4^{-5}8^{7}16^{-2}$ &
	$4/$\texttt{+-+0} & $1^{4}2^{-2}4^{-1}8^{2}$ \\ \hline
	$4/$\texttt{+-00} & $1^{2}2^{-1}4^{-2}8^{4}$ &
	$4/$\texttt{+-00} & $1^{2} 2^{-1}4^{-1}8^{1}16^2$ \\ \hline
	$4/$\texttt{+-00} & $1^{2} 2^{-1}4^{-3}8^{7}16^{-2}$ &
	$8/$\texttt{+++0++00} & $1^{-4}2^{10}4^{-4}8^{-1}16^{2}$ \\ \hline
	$8/$\texttt{++00+000} & $1^{-2}2^{5}4^{-2}8^{-2}16^{4}$ &
	$8/$\texttt{+-+0+-00} & $1^{4}2^{-2}8^{-1}16^{2}$ \\ \hline
	$8/$\texttt{+-00+000} & $1^{2}2^{-1}8^{-2}16^{4}$ &
	$16/$\texttt{+++++0+0++0++000} & $ 1^{-2} 2^{3} 4^{3} 8^{-2} 16^{-1} 32^{2} $ \\ \hline
	$16/$\texttt{++00+0000+000000} & $ 1^{-2} 2^{5} 4^{-2} 16^{-2} 32^{4} $ &
	$16/$\texttt{++--+0-0++0-+000} & $ 1^{-2} 2^{7} 4^{-3} 16^{-1} 32^{2} $ \\ \hline
	$16/$\texttt{+-+-+0+0+-0-+000} & $ 1^{2} 2^{-3} 4^{5} 8^{-2} 16^{-1} 32^{2} $ &
	$16/$\texttt{+-00+0000-000000} & $ 1^{2} 2^{-1} 16^{-2} 32^{4} $ \\ \hline
	$16/$\texttt{+--++0-0+-0++000} & $ 1^{2} 2^{1} 4^{-1} 16^{-1} 32^{2} $ & &
	\end{longtable}}
\end{theorem}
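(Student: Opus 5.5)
Since $\mathcal{M}_{3/2}(\varGamma_0(32))\supseteq\mathcal{M}_{3/2}(\varGamma_0(16))\supseteq\mathcal{M}_{3/2}(\varGamma_0(8))$, I will work at level $32$ (Theorem~\ref{tho:5}) and then specialize. The plan has three steps: (i) write down an explicit basis of $\mathcal{M}_{3/2}(\varGamma_0(2^r))$ with trivial character; (ii) compute the Fourier expansions of its elements in closed form; (iii) read off the sign behaviour.

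\textbf{Step (i).} The degree-$3$ part of $\mathbb{C}[x_1,\dots,x_4]/I$ in Theorem~\ref{tho:5} decomposes by character. Each generator has a known character: $x_1=\theta(z)$ transforms with the theta multiplier, and the others are twists of it by $\chi_2,\chi_{-1},\chi_{-2}$ type characters. So the trivial-character part is spanned by monomials $x^\alpha$ with $|\alpha|=3$ and total character trivial. Reducing modulo the two quadratic relations, I expect this part to be spanned by forms that are cubic in theta-type series, i.e.\ representation numbers of ternary quadratic forms. For example $x_1^3=\theta(z)^3$ gives $r_3(n)$, and the others are theta series of $\sum c_i m_i^2$ with $c_i\in\{1,2,4,8,16\}$ and congruence conditions. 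The same computation at levels $8$ and $16$ picks out subspaces. The Serre--Stark basis theorem identifies the theta-series (Eisenstein-type) part and shows $\mathcal{S}_{3/2}(\varGamma_0(2^r))=0$ for $r\le5$, since no nonzero cusp form of weight $3/2$ exists at these levels. Hence every $f$ is a linear combination of (i) weight-$3/2$ Eisenstein series attached to the cusps, with coefficients of the form $n^{1/2}L(1,\chi_{-n})$ times $2$-adic local factors, i.e.\ class-number–type quantities $H(4n)$ and the like; and (ii) unary-theta–like terms of weight $3/2$, namely $\sum n\,\psi(n)q^{n^2}$ with $\psi$ odd. The latter cannot occur with trivial character at these levels, which I would check.

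\textbf{Step (ii).} Every $f$ with real coefficients then has the shape
\[
a(n)=c_0\,\delta_{n=0}+\sum_j c_j\,\epsilon_j(n)\,H^*(n),
\]
where $H^*(n)$ is a positive Hurwitz/class-number quantity of size $n^{1/2\pm\varepsilon}$, and $\epsilon_j(n)$ depends only on the $2$-adic part of $n$ (its $2$-adic valuation and its odd part mod $8$). The key step is showing that the $2$-adic factors make $a(n)/H^*(n)$ a function of $n$ modulo $2^{r-1}$. This needs care, because the valuation is unbounded. For $v_2(n)$ large, the local factors stabilize geometrically, and the claim is that the constant-term condition governs them. Concretely, the cusp $\infty$ contributes the piece whose coefficients along $n=4^k m$ grow like $2^k$ times a constant, and this piece is tied to $a(0)$.

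\textbf{Step (iii).} If $a(0)=0$, the $\infty$-Eisenstein component vanishes. The remaining local factors then depend only on $n \bmod 2^{r-1}$ together with a factor that, after normalization, takes a sign determined by the residue class. This gives periodic signs with period dividing $2^{r-1}$. If $a(0)\neq0$, then along $n=4^k m$ two components grow at different rates, or have different sign patterns in $k$: one grows like $2^k$ and the other is bounded relative to it. So the signs on the subsequence $4^k m$ with $4^k\equiv0 \pmod{T}$ eventually agree with $\operatorname{sgn}(a(0))$ on all odd classes $m$, while $\operatorname{sgn}(a(m))$ for small $m$ is forced otherwise, contradicting periodicity. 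I would make this concrete with $\theta^3$-type examples: for $r_3$, the value at $n=4^k\cdot 7$ is $0$, so that residue class pins down a contradiction.

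The main obstacle is Step (ii): obtaining a uniform description of the $2$-adic Euler factors for every element of the space, not just for basis elements, so that the sign analysis in Step (iii) is valid.

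Finally, the table of $23$ eta quotients is a finite check. I would list all holomorphic eta quotients of weight $3/2$ and trivial character at level $32$, using the standard order-at-cusps criterion. I would keep those vanishing at $\infty$, i.e.\ those with $\sum\delta r_\delta>0$, and compute their sign patterns from finitely many coefficients.
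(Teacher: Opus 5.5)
\textbf{There is a genuine gap at the core of the proposal.} Your Step (ii) is not a technical detail to be filled in later. It is the whole content of the theorem, and you only state what you expect the coefficients to look like. Nothing in the proposal shows, for an arbitrary $f$ in the space, that $a(n)$ is a sign depending on $n \bmod 2^{r-1}$ times a positive quantity once $a(0)=0$. Nor does it determine which classes carry zero coefficients.

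\textbf{The mechanism in Step (iii) is false here.} No element of $\mathcal{M}_{3/2}(\varGamma_0(32))$ has coefficients growing like $2^k$ along $n=4^k m$. In fact every $f$ satisfies $a(4^k m)=a(0)\,r_3(m)$ for $k\geqslant 2$, which is bounded in $k$. Your claim that the signs on $4^k m$ eventually equal $\operatorname{sgn}(a(0))$ for all odd $m$ fails for $m\equiv 7 \pmod 8$, where the coefficient is $0$. It even contradicts the $r_3(4^k\cdot 7)=0$ example you then invoke. Non-periodicity comes from the zeros of $r_3$, not from competing growth rates.

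Two minor points. Serre--Stark is a weight-$1/2$ result; $\mathcal{S}_{3/2}=0$ here comes from Lemma~\ref{lem:2} combined with Lemmas~\ref{lem:3}--\ref{lem:4}. Also, the generators are $\theta(z)$, $\theta(2z)$, $\tfrac12(\theta(z)-\theta(4z))$ and its image under $V(2)$, with characters only trivial or $\chi_8$.

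\textbf{The missing idea} avoids Eisenstein series and local densities entirely.
\begin{enumerate}
\item Take the six eta-quotient basis elements of $\mathcal{M}_{3/2}(\varGamma_0(32))$ given by Theorem~\ref{tho:5}.
\item By Lemma~\ref{lem:6}(4) and Sturm's bound (Lemma~\ref{lem:7}), each is a finite linear combination of sieved pieces $\theta^3\mid S_{M,m}$.
\item Inverting these identities gives a basis of $\mathcal{M}_{3/2}(\varGamma_0(32))$ consisting of $r_3(n)$ restricted to pairwise disjoint sets: $n\equiv 0,12 \pmod{16}$, $n\equiv 1\pmod 8$, $n\equiv 2\pmod 4$, $n\equiv 3\pmod 8$, $n\equiv 4,8\pmod{16}$, and $n\equiv 5\pmod 8$.
\item Levels $8$ and $16$ give the analogous bases modulo $4$ and $8$.
\end{enumerate}
Hence every $f$ has $a(n)=c_j\,r_3(n)$ on the $j$-th set, with $c_0=a(0)$.

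Legendre's three-square theorem then finishes the proof. On every set except $n\equiv 0,12\pmod{16}$, $r_3$ is strictly positive, so for $a(0)=0$ the signs are periodic with $T\mid 2^{r-1}$. On $n\equiv 0,12\pmod{16}$ the zero set is $\{4^a(8b+7)\}$. This set meets every progression $\{mT\}$: choose $a$ with $16\mid 4^a$ and $4^a\equiv 0$ modulo the $2$-part of $T$, and $8b+7\equiv 0$ modulo the odd part of $T$. So the signs cannot be periodic when $a(0)\neq 0$.

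Your Step (ii) should be replaced by this explicit basis. Without it, or a complete Eisenstein-series computation at every cusp, the argument does not go through.
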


\begin{theorem}\label{tho:7}
	If $f(z) \in \mathcal{M}_{3/2}(\varGamma_0(2^r),\chi_8 )$ 
	with $3 \leqslant r \leqslant 5$, 
	write $f(z) = \sum_{n=0}^{\infty} a(n) q^n$.
	When $a(0) = 0$, the sign $\operatorname{sgn}(a(n))$ is periodic.
	Let $T$ denote its period, then $ T \mid 2^{r-1}$. 
	When $a(0) \neq 0$, the sign $\operatorname{sgn}(a(n))$ is not periodic.
	From these we know that $16$ eta quotients exhibit periodicity, 
	as listed in Table~\ref{tab:eta_periodic_32_2}.
	{\renewcommand{\arraystretch}{1.4}
	\begin{longtable}{|c|c|c|c|}
	\caption{Eta quotients with periodic sign patterns} 
	\label{tab:eta_periodic_32_2}\\[-1ex]
	\hline
	Period/sign pattern & Eta quotient & Period/sign pattern & Eta quotient\\
	\hline
	\endfirsthead  
	\multicolumn{4}{l}{continued from previous page} \\
	\hline
	Period/sign pattern & Eta quotient & Period/sign pattern & Eta quotient\\
	\hline
	\endhead
	\hline
	\multicolumn{4}{r}{{continued on next page}} \\
	\endfoot
	\hline
	\endlastfoot
	$1/$\texttt{+} & $1^{-2}2^3 4^2$ &
	$1/$\texttt{+} & $ 1^{-2} 2^{4} 4^{-1} 8^{2} $ \\ \hline
	$1/$\texttt{+} & $ 1^{-2} 2^{2} 4^{5} 8^{-2} $ &
	$2/$\texttt{+-} & $ 1^{2} 2^{-3} 4^{4} $ \\ \hline
	$8/$\texttt{+++++0+0} & $1^{-2}2^{3}4^{3}8^{-3}16^{2}$ &
	$8/$\texttt{++00+000} & $ 1^{-2} 2^{5} 4^{-2} 8^{-1} 16^{1} 32^{2} $ \\ \hline
	$8/$\texttt{++00+000} & $ 1^{-2} 2^{5} 4^{-2} 8^{-3} 16^{7} 32^{-2} $ &
	$8/$\texttt{++--+0-0} & $ 1^{-2} 2^{7} 4^{-3} 8^{-1} 16^{2} $ \\ \hline
	$8/$\texttt{+-+-+0+0} & $1^{2}2^{-3}4^{5}8^{-3}16^{2}$ &
	$8/$\texttt{+-00+000} & $ 1^{2} 2^{-1} 8^{-1} 16^{1} 32^{2} $ \\ \hline
	$8/$\texttt{+-00+000} & $ 1^{2} 2^{-1} 8^{-3} 16^{7} 32^{-2} $ &
	$8/$\texttt{+--++0-0} & $ 1^{2} 2^{1} 4^{-1} 8^{-1} 16^{2} $ \\ \hline
	$16/$\texttt{+++0++00+++00+00} & $ 1^{-4} 2^{10} 4^{-4} 16^{-1} 32^{2} $ &
	$16/$\texttt{++00++00++000+00} & $ 1^{-2} 2^{5} 4^{-4} 8^{5} 16^{-3} 32^{2} $ \\ \hline
	$16/$\texttt{+-+0+-00+-+00-00} & $ 1^{4} 2^{-2} 16^{-1} 32^{2} $ &
	$16/$\texttt{+-00+-00+-000-00} & $ 1^{2} 2^{-1} 4^{-2} 8^{5} 16^{-3} 32^{2} $
	\end{longtable}}
\end{theorem}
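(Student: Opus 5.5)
The plan is to reduce the statement to linear algebra in the finite-dimensional space $\mathcal{M}_{3/2}(\varGamma_0(32),\chi_8)$, using the structure given by Theorem~\ref{tho:5}. It suffices to treat $r=5$, since $\mathcal{M}_{3/2}(\varGamma_0(2^r),\chi_8)\subseteq\mathcal{M}_{3/2}(\varGamma_0(32),\chi_8)$ for $r=3,4$, provided the period bound is later refined to $2^{r-1}$ for the smaller levels.

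\textbf{Step 1: an eta quotient basis.} By Theorem~\ref{tho:5}, the weight $3/2$ part is spanned by the degree $3$ monomials in $x_1,\dots,x_4$, modulo the two quadratic relations. I will determine the Nebentypus of each generator:
\begin{itemize}
\item $x_1=\theta(z)$ has trivial character;
\item $x_2=\theta(2z)$ has character $\chi_8$;
\item $x_3$ and $x_4$ have characters read off from the standard eta quotient criteria.
\end{itemize}
This isolates the monomials lying in the $\chi_8$-component. Reducing modulo the Gröbner basis of the ideal gives an explicit basis of eta quotients.

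\textbf{Step 2: the cuspidal part vanishes.} Theorem~\ref{tho:5} shows that every cusp form is divisible by $\eta(4z)^2\eta(8z)^2$, which has weight $2$. Hence $\mathcal{S}_{3/2}(\varGamma_0(32),\chi_8)=0$, and the whole space is Eisenstein.

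\textbf{Step 3: a basis adapted to residue classes.} I will change basis to forms $g_j$ indexed by residues $j \bmod 16$, obtained from the $x$-monomials by the sieving operator
\[
\frac{1}{16}\sum_{\ell=0}^{15} e^{-2\pi i \ell j/16}\, f\!\left(z+\tfrac{\ell}{16}\right),
\]
restricted to combinations that stay in the space. Each $g_j$ is supported on $n\equiv j \pmod{16}$. The goal is to show that each $g_j$ with $g_j(0)=0$ has coefficients that are strictly positive on its whole support, apart from an explicitly identified zero set that is itself periodic, and that the $g_j$ with $a(0)=0$ span the subspace $\{a(0)=0\}$.

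For the positivity I would first try to recognise each such $g_j$ as a theta series of a ternary lattice coset, for example products such as $\theta(z)^2\theta(2z)$ restricted to a progression. Alternatively, I would recognise it as an Eisenstein series whose coefficients are local densities times class numbers, as in the Siegel--Weil formula, and these are nonnegative. Given positivity, if $a(0)=0$ then $a(n)=c_j\,b_j(n)$ with $b_j(n)>0$ whenever $n\equiv j \pmod{16}$. So $\operatorname{sgn}a(n)=\operatorname{sgn}c_j$, which is periodic with period dividing $16=2^{r-1}$. At levels $8$ and $16$ fewer residue classes appear, which should give $T\mid 2^{r-1}$ in those cases too.

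\textbf{Step 4: non-periodicity when $a(0)\neq 0$.} Here $f$ has a component along the Eisenstein series that is nonzero at $\infty$, for instance the part coming from $x_1^2x_2$. I will compare the coefficients on two subsequences inside a single residue class, such as $n=m$ and $n=4^k m$ with $m$ odd. The component at $\infty$ grows in a different way along these subsequences (its $2$-adic local factor differs) from the components supported at the other cusps, which have $a(0)=0$. Writing out the $2$-adic local factors should show that $\operatorname{sgn}a(n)$ takes both signs, or takes both zero and nonzero values, within one residue class modulo every candidate period, and this rules out periodicity.

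\textbf{Step 5: the table.} I will list all holomorphic eta quotients of weight $3/2$, level dividing $32$ and character $\chi_8$. Finiteness follows from the order-at-cusps inequalities. Duplicates with $f(z)=f(mz)$ are discarded. The condition $a(0)=0$ amounts to the exponent $a_f>0$. For the survivors I will read off the sign pattern from the first $16$ coefficients and check it against the decomposition in Step 3.

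I expect the main obstacle to be the positivity in Step 3, namely establishing that each residue-class component has coefficients of constant sign on its entire support rather than merely for small $n$. The non-periodicity argument in Step 4 needs the same explicit local-density information.
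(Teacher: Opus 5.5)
\textbf{Verdict: there is a genuine gap.} Your Steps 1, 2 and 5 and the reduction to $r=5$ are fine. Step~3, which you yourself flag as the main obstacle, is the whole content of the theorem, and neither route you sketch closes it.

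\emph{The mod-$16$ sieve does not preserve the space.} Lemma~\ref{lem:6}(4) only controls $S_{AB,AC}$ with $\gcd(B,C)=1$ and $B\mid 24$, and even then it places the result in a much larger level. For example, $b|S_{16,2}$ (with $b$ as below) is not in $\mathcal{M}_{3/2}(\varGamma_0(32),\chi_8)$. So which residue-class pieces lie in the space has to be proved by explicit identities; it cannot be assumed.

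\emph{Nonnegativity is not enough.} Reading the $g_j$ as coset theta series, or appealing to Siegel--Weil, gives at best nonnegativity. But the sign patterns in the table contain zeros, and the failure of periodicity when $a(0)\neq 0$ comes \emph{only} from where the coefficients vanish. On $n\equiv 0\pmod 8$, every $f$ in the space has $a(n)=a(0)\,b(n)$ with $b(n)\geqslant 0$, so two signs never occur there. Your Step~4 comparison of growth along $m$ and $4^k m$ therefore targets the wrong phenomenon.

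\emph{The missing idea (this is how the paper proceeds).} Route everything through the single form
$b=\theta(z)^2\theta(2z)=\eta(2z)^8\eta(4z)/(\eta(z)^4\eta(8z)^2)$, where $\theta(z)=\sum_{n\in\mathbb{Z}}q^{n^2}$; $b$ is itself one of the basis eta quotients.
\begin{enumerate}
\item Put the pieces $b|S_{M,m}$ in a common level with Lemma~\ref{lem:6}, and use Sturm's bound (Lemma~\ref{lem:7}). This shows the other five basis eta quotients are explicit combinations of $b|S_{2,0}$, $b|S_{2,1}$, $b|S_{4,3}$, $b|S_{8,2}$, $b|S_{8,4}$, $b|S_{16,6}$.
\item Inverting these relations, the space has basis $b|S_{8,0}$, $b|S_{4,1}$, $b|S_{4,3}$, $b|S_{8,2}$, $b|S_{8,4}$, $b|S_{16,6}$. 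These have disjoint supports, and only the first has a constant term.
\item The identity $b=\frac{1}{3}\theta^3|U(2)+\frac{2}{3}\theta^3|V(2)$ gives $b(n)=r_3(2n)$ for even $n$ and $b(n)=\frac{1}{3}r_3(2n)$ for odd $n$.
\item Legendre's three-square theorem then gives $b(n)=0$ exactly when $n=4^c(16d+14)$.
\end{enumerate}

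Consequently $b(n)>0$ on every residue class except $n\equiv 0\pmod 8$ and $n\equiv 14\pmod{16}$. This gives periodicity with $T\mid 16$ when $a(0)=0$. At levels $16$ and $8$ the bases are $b|S_{2,0}, b|S_{2,1}, b|S_{8,2}+b|S_{8,4}$ and $b|S_{2,0}, b|S_{2,1}$ respectively, giving $T\mid 8$ and $T\mid 4$.

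When $a(0)\neq 0$, the zero set $\{4^c(16d+14): c\geqslant 1\}$ inside $n\equiv 0\pmod 8$ is not periodic. For a putative period $T=2^s t$ with $t$ odd and $c$ large, compare $n_1=4^c\cdot 14$, where $b(n_1)=0$, with $n_2=4^c(14+4t)\equiv n_1\pmod T$, where $b(n_2)\neq 0$.

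Your guess ``$\theta(z)^2\theta(2z)$ restricted to a progression'' is the right object. What is missing is the set of spanning identities and the exact zero set coming from the three-square theorem.
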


\begin{theorem}\label{tho:8}
	If $f(z) \in \mathcal{M}_{2}(\varGamma_0(2^r))$
	with $2 \leqslant r \leqslant 4$,
	write $f(z) = \sum_{n=0}^{\infty} a(n) q^n$.
	The sign $\operatorname{sgn}(a(n))$ is periodic.
	Let $T$ denote its period, then $T \mid 2^{r-1}$. 
	From these we know that $36$ eta quotients exhibit periodicity, 
	as listed in Table~\ref{tab:eta_periodic_16}.
	{\renewcommand{\arraystretch}{1.4}
	\begin{longtable}{|c|c|c|c|}
	\caption{Eta quotients with periodic sign patterns} 
	\label{tab:eta_periodic_16}\\[-1ex]
	\hline
	Period/sign pattern & Eta quotient & Period/sign pattern & Eta quotient\\
	\hline
	\endfirsthead  
	\multicolumn{4}{l}{continued from previous page} \\
	\hline
	Period/sign pattern & Eta quotient & Period/sign pattern & Eta quotient\\
	\hline
	\endhead
	\hline
	\multicolumn{4}{r}{{continued on next page}} \\
	\endfoot
	\hline
	\endlastfoot
	$1/$\texttt{+} & $ 1^{-4} 2^{8} $ &
	$1/$\texttt{+} & $ 1^{-8} 2^{20} 4^{-8} $ \\ \hline
	$1/$\texttt{+} & $ 1^{-4} 2^{6} 4^{6} 8^{-4} $ &
	$1/$\texttt{+} & $ 1^{-2} 2^{2} 4^{4} $ \\ \hline
	$1/$\texttt{+} & $ 1^{-6} 2^{14} 4^{-4} $ &
	$1/$\texttt{+} & $ 1^{-2} 2^{1} 4^{6} 8^{1} 16^{-2} $ \\ \hline
	$1/$\texttt{+} & $ 1^{-6} 2^{15} 4^{-8} 8^{5} 16^{-2} $ &
	$2/$\texttt{+-} & $ 1^{8} 2^{-4} $ \\ \hline
	$2/$\texttt{+-} & $ 1^{4} 2^{-6} 4^{10} 8^{-4} $ &
	$2/$\texttt{+-} & $ 1^{4} 2^{-4} 4^{4} $ \\ \hline
	$2/$\texttt{+-} & $ 1^{2} 2^{-5} 4^{8} 8^{1} 16^{-2} $ &
	$2/$\texttt{+-} & $ 1^{6} 2^{-3} 4^{-2} 8^{5} 16^{-2} $ \\ \hline
	$4/$\texttt{+++0} & $ 1^{-4} 2^{10} 4^{-6} 8^{4} $ &
	$4/$\texttt{+++0} & $ 1^{-4} 2^{10} 4^{-8} 8^{10} 16^{-4} $ \\ \hline
	$4/$\texttt{++00} & $ 1^{-2} 2^{5} 4^{-4} 8^{3} 16^{2} $ &
	$4/$\texttt{++00} & $ 1^{-2} 2^{5} 4^{-6} 8^{9} 16^{-2} $ \\ \hline
	$4/$\texttt{++00} & $ 1^{-2} 2^{5} 4^{-8} 8^{15} 16^{-6} $ &
	$4/$\texttt{++--} & $ 1^{-2} 2^{9} 4^{-6} 8^{5} 16^{-2} $ \\ \hline
	$4/$\texttt{+-+0} & $ 1^{4} 2^{-2} 4^{-2} 8^{4} $ &
	$4/$\texttt{+-+0} & $ 1^{4} 2^{-2} 4^{-4} 8^{10} 16^{-4} $ \\ \hline
	$4/$\texttt{+-00} & $ 1^{2} 2^{-1} 4^{-2} 8^{3} 16^{2} $ &
	$4/$\texttt{+-00} & $ 1^{2} 2^{-1} 4^{-4} 8^{9} 16^{-2} $ \\ \hline
	$4/$\texttt{+-00} & $ 1^{2} 2^{-1} 4^{-6} 8^{15} 16^{-6} $ &
	$4/$\texttt{+--+} & $ 1^{2} 2^{3} 4^{-4} 8^{5} 16^{-2} $ \\ \hline
	$8/$\texttt{+++++++0} & $ 1^{-2} 2^{1} 4^{8} 8^{-5} 16^{2} $ &
	$8/$\texttt{+++++++0} & $ 1^{-6} 2^{15} 4^{-6} 8^{-1} 16^{2} $ \\ \hline
	$8/$\texttt{+++0++00} & $ 1^{-4} 2^{10} 4^{-4} 8^{-2} 16^{4} $ &
	$8/$\texttt{++00+000} & $ 1^{-2} 2^{5} 4^{-2} 8^{-3} 16^{6} $ \\ \hline
	$8/$\texttt{++0--+0-} & $ 1^{-4} 2^{14} 4^{-6} $ &
	$8/$\texttt{++--++-0} & $ 1^{-2} 2^{9} 4^{-4} 8^{-1} 16^{2} $ \\ \hline
	$8/$\texttt{+-+0+-00} & $ 1^{4} 2^{-2} 8^{-2} 16^{4} $ &
	$8/$\texttt{+-+-+-+0} & $ 1^{6} 2^{-3} 8^{-1} 16^{2} $ \\ \hline
	$8/$\texttt{+-+-+-+0} & $ 1^{2} 2^{-5} 4^{10} 8^{-5} 16^{2} $ &
	$8/$\texttt{+-0+--0+} & $ 1^{4} 2^{2} 4^{-2} $ \\ \hline
	$8/$\texttt{+-00+000} & $ 1^{2} 2^{-1} 8^{-3} 16^{6} $ &
	$8/$\texttt{+--++--0} & $ 1^{2} 2^{3} 4^{-2} 8^{-1} 16^{2} $
	\end{longtable}}
\end{theorem}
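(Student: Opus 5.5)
Since $\mathcal{M}_2(\varGamma_0(2^r))\subseteq\mathcal{M}_2(\varGamma_0(16))$ for $2\leqslant r\leqslant 4$, I will work throughout in level $16$. The first step is to identify the weight-$2$ trivial-character component inside the degree-$4$ part of the algebra in Theorem~\ref{tho:3}. The generators $x_1=\eta(2z)^5/(\eta(z)^2\eta(4z)^2)=\theta(z)$, $x_2=\theta(2z)$ and $x_3=\eta(16z)^2/\eta(8z)=\sum_{n\ \mathrm{odd}}q^{n^2}$ are all theta-type series. By dimension formulas, $\mathcal{M}_2(\varGamma_0(16))$ has dimension $5$ and has no cusp forms, since $X_0(16)$ has genus $0$; it is therefore spanned by Eisenstein series.

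For the basis I will take the degree-$4$ monomials in $x_1,x_2,x_3$, reduced by the relation of Theorem~\ref{tho:3}, and select those with the trivial character. The character of each monomial is read off from the theta multiplier system: a product of four theta functions has character $\chi_{-4}^2=\mathbf 1$ up to the level-$8$ twist. Equivalently, I can choose $5$ eta quotients and express each of them in the Eisenstein basis.

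\begin{enumerate}
\item \textbf{Explicit coefficients.} I will write a general $f$ in the Eisenstein basis
\[
E_2(z)-dE_2(dz)\ (d=2,4,8,16),\qquad \text{together with } \theta^4\text{-type forms.}
\]
Then $a(n)$ for $n\geqslant1$ has the shape $\sum_d c_d\,\sigma(n/d)$, where $\sigma(n/d)=0$ if $d\nmid n$. Writing $n=2^v m$ with $m$ odd, multiplicativity gives
\[
\sigma(2^j m)=(2^{j+1}-1)\,\sigma(m).
\]
Hence $a(n)=\sigma(m)\,P(v)$, where $P(v)$ depends only on $v$ and on the coefficients $c_d$. Since $\sigma(m)>0$, we get $\operatorname{sgn}a(n)=\operatorname{sgn}P(v)$.

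\item \textbf{Stabilisation of $P(v)$.} For $v\geqslant 4$ all divisors $d\mid 16$ divide $n$, so $P(v)$ is an affine function of $2^{v}$ and is eventually of constant sign. I must check that it is in fact constant for $v\geqslant r-1$, or deal with a possible exceptional zero.

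\item \textbf{Period.} Granting that, $\operatorname{sgn}a(n)$ depends only on $\min(v,r-1)$, that is, on $n \bmod 2^{r-1}$, which gives $T\mid 2^{r-1}$. The hard point is precisely that $P(v)$ is constant from $v=r-1$ on rather than only from $v=r$. I would try to verify this directly: the coefficient of $2^{v}$ and the constant term turn out proportional in a way forced by the behaviour of $f$ at the cusp $0$. I would first check this numerically on the basis.
\end{enumerate}

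For the table, I will enumerate all holomorphic eta quotients of weight $2$, level dividing $16$ and trivial character. The constraints are $\sum r_\delta=4$, Ligozat's order conditions at the cusps (orders $\geqslant 0$), and the square condition on $\prod\delta^{r_\delta}$. I will remove duplicates of the form $f(z)=f(mz)$. Each remaining quotient is then expressed in the basis by matching the first few coefficients, and its pattern $\operatorname{sgn}P(v)$ is computed.

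The theorem asserts periodicity for every $f$ in the space, so every such quotient has a periodic sign. I read "exhibit periodicity" as the resulting list of $36$ quotients, each with its computed pattern. The main obstacle is the exhaustive and correct enumeration of the eta quotients, which is finite by the Ligozat inequalities but requires careful computer bookkeeping.
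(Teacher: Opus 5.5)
Your proposal takes a different route from the paper and could work, but as written it has two genuine gaps.

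\textbf{First gap: Step~1 is false for $r=4$.} The four forms $E_2(z)-dE_2(dz)$, $d=2,4,8,16$, span only a $4$-dimensional subspace of the $5$-dimensional space $\mathcal{M}_2(\varGamma_0(16))$. The fifth Eisenstein series is the one attached to $(\chi_{-4},\chi_{-4})$, namely $E_{\chi_{-4}}:=\sum_{n\ \mathrm{odd}}\chi_{-4}(n)\sigma(n)q^n=\frac18\bigl(\theta^4|S_{4,1}-\theta^4|S_{4,3}\bigr)$. The ``$\theta^4$-type forms'' you mention are silently dropped when you write $a(n)=\sum_d c_d\,\sigma(n/d)$. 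With this form present, $a(n)=\sigma(n)\bigl(P(0)+\mu\chi_{-4}(n)\bigr)$ for odd $n$. So $a(n)=\sigma(m)P(v)$ fails, and $\operatorname{sgn}a(n)$ is not a function of $\min(v,r-1)$. For $1^{4}2^{2}4^{-2}$ one finds $a(1)=-4$, $a(3)=16$, $a(5)=-24$, i.e.\ $a(n)=-4\chi_{-4}(n)\sigma(n)$ on odd $n$. This is the table's pattern \texttt{+-0+--0+}, with opposite signs on $n\equiv1$ and $n\equiv3\pmod 4$. The bound $T\mid 8$ survives, but only after you split the odd class modulo $4$.

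\textbf{Second gap: the step you call the hard point is unproved, and $n=0$ is never treated.} Periodicity from $n=0$ on requires $\operatorname{sgn}a(0)=\operatorname{sgn}a(2^{r-1})$. Knowing that $P(v)$ is affine in $2^v$ is not enough, since a nonzero slope permits a sign change at some large $v$. Both points follow quickly from holomorphy:
\begin{itemize}
\item Write $f=\sum_{j=0}^{r}\lambda_jE_2(2^jz)+\mu E_{\chi_{-4}}$, with $\mu=0$ unless $r=4$. Cancelling the non-holomorphic terms $-3/(\pi 2^j y)$ of the $E_2^*(2^jz)$ forces $\sum_j\lambda_j2^{-j}=0$.
\item Put $c_j=-24\lambda_j$ (your $c_{2^j}$), so $a(0)=-\frac1{24}\sum_jc_j$.
\item For $n=2^vm$ with $v\geqslant1$, $E_{\chi_{-4}}$ contributes nothing and $P(v)=2^{v+1}\sum_{j\leqslant\min(v,r)}c_j2^{-j}-\sum_{j\leqslant\min(v,r)}c_j$.
\item For $v\geqslant r-1$, the sum $\sum_{j\leqslant\min(v,r)}c_j2^{-j}$ equals $-c_r2^{-r}$ (if $v=r-1$) or $0$ (if $v\geqslant r$). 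Hence $P(v)=-\sum_{j=0}^{r}c_j=24a(0)$.
\end{itemize}
Together with the first repair, this gives $T\mid 2^{r-1}$, including $n=0$.

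\textbf{Comparison with the paper.} Repaired this way, your Eisenstein-series argument is a valid alternative. The paper instead uses Sturm's bound to express the five basis eta quotients as combinations of $\theta^4|S_{M,m}$. Thus $\mathcal{M}_2(\varGamma_0(16))$ is spanned by $\theta^4$ restricted to $n\equiv0,4\pmod 8$ and $n\equiv1,2,3\pmod 4$, with coarser classes at levels $4$ and $8$. It then uses $r_4(n)>0$ for all $n\geqslant0$. That handles the $\chi_{-4}$ splitting and the constant term at once. Your route instead explains structurally why the pattern stabilises exactly at $v=r-1$.
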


\section{Preliminaries}

\subsection{Modular forms} 
We briefly introduce the definitions of modular forms 
of integer and half-integer weight,  
for more details, see \cite{Koblitz1993,Ono2004,Diamond2005}.
For odd $d$,
define $\varepsilon_d$ by 
$$
	\varepsilon_d=\begin{cases}
	1 & \text{if\;} d \equiv 1 \pmod 4,\\
	i & \text{if\;} d \equiv 3 \pmod 4.
\end{cases}
$$
Throughout, we let $\sqrt{z}$ be the branch of the square root having argument in
$(-\pi/2,\pi/2]$.
If $f(z)$ is a meromorphic function on $\mathcal{H}$ and $\varGamma=\begin{pmatrix}
	a & b\\ c & d
\end{pmatrix} \in \text{SL}_2(\mathbb{Z})$, then define
the “slash” operator $\mid_k$ by
$$
(f\mid_k\varGamma)(z)=\begin{cases}
	(cz+d)^{-k}f(\varGamma z) & \text{if\;} k\in \mathbb{Z}, \varGamma \in \text{SL}_2(\mathbb{Z}),\\
	\left(\frac{c}{d}\right)^{-2k}\varepsilon_d^{2k} (cz+d)^{-k}f(\varGamma z) & \text{if\;} k\in \mathbb{Z}+\frac{1}{2}, \varGamma \in \varGamma_0(4).
\end{cases}
$$
A holomorphic function $f(z)$ on $\mathcal{H}$ is called a modular (resp. cusp) form 
of weight $k \in \frac{1}{2}\mathbb{Z}$ on $\varGamma_0(N)$ (if $k\in \frac{1}{2}+\mathbb{Z}$, than $4 \mid N$) 
with Nebentypus $\chi$ 
if $f\mid_k\varGamma=\chi(d)f$ for all 
$\varGamma=\begin{pmatrix}
	a & b\\ c & d
\end{pmatrix} \in \varGamma_0(N)$
and $\lim_{z\rightarrow i\infty}(cz+d)^{-k}f(\varGamma z)$ exists (resp. vanishes) for all 
$\varGamma=\begin{pmatrix}
	a & b\\ c & d
\end{pmatrix} \in \text{SL}_2(\mathbb{Z})$.

\subsection{Dimension formulas}
For integral weights, 
the following lemma gives the dimensions 
of the spaces of modular forms and cusp forms,
see \cite[Théorème 1]{CO1977} or \cite[Theorem 1.34]{Ono2004}.
In this paper, 
$\mathbb{N}$ denotes the set of nonnegative integers,
and $\mathbb{N}^*$ denotes the set of positive integers.

\begin{lemma}\label{lem:1}
	If $k\in \mathbb{Z}$, $N \in \mathbb{N}^*$ and $\chi$ is a Dirichlet character modulo $N$ for which $\chi(-1)=(-1)^{k}$, 
	then
	\begin{align*}
	&\dim \mathcal{S}_{k}(\varGamma_{0}(N),\chi)-\dim \mathcal{M}_{2-k}(\varGamma_{0}(N),\chi) \\
	= &\frac{(k-1)N}{12}\prod_{p\mid N}(1+p^{-1}) 
	-\frac{1}{2}\prod_{p\mid N}\lambda(r_{p},s_{p},p) 
	+\nu_{k}\sum_{\substack{x\pmod{N},\\ x^{2}+1\equiv 0\pmod{N}}}\chi(x)\\
	&+\mu_{k}\sum_{\substack{x\pmod{N},\\ x^{2}+x+1\equiv 0\pmod{N}}}\chi(x),
	\end{align*}
	with the following notation
	\begin{itemize}[leftmargin=1.5em]
		\item 
		If $p \mid N$ is prime, 
		then let $r_p$ (resp. $s_p$) denote the power of $p$ dividing $N$ 
		(resp. the conductor of $\chi$).

		\item 
		$ \lambda(r_p, s_p, p) $ is given by
		\begin{equation*}
			\lambda(r_p,s_p,p) = 
			\begin{cases}
				p^{r'} + p^{r'-1} & \text{if } 2s_p \leqslant r_p = 2r', \\
				2p^{r'} & \text{if } 2s_p \leqslant r_p = 2r' + 1, \\
				2p^{r_p - s_p} & \text{if } 2s_p > r_p.
			\end{cases}
		\end{equation*}

		\item 
		$ \nu_k $ is given by
		\begin{equation*}
		\nu_k = 
		\begin{cases}
		0 & \text{if } k \text{ is odd}, \\
		-\frac{1}{4} & \text{if } k \equiv 2 \pmod{4}, \\
		\frac{1}{4} & \text{if } k \equiv 0 \pmod{4}.
		\end{cases}
		\end{equation*}

		\item 
		$ \mu_k $ is given by
		$$
			\mu_k =
			\begin{cases}
				0 & \text{if } k \equiv 1 \pmod{3}, \\
				-\frac{1}{3} & \text{if } k \equiv 2 \pmod{3}, \\
				\frac{1}{3} & \text{if } k \equiv 0 \pmod{3}.
			\end{cases}
		$$
	\end{itemize}
\end{lemma}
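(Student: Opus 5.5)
This is the Cohen--Oesterl\'e formula, and the paper simply cites it. If I were to prove it, I would use the Riemann--Roch theorem on the compact modular curve $X=X_0(N)=\varGamma_0(N)\backslash\mathcal{H}^*$, together with Serre duality. The plan is to realize both $\mathcal{M}_k(\varGamma_0(N),\chi)$ and $\mathcal{S}_k(\varGamma_0(N),\chi)$ as spaces of global sections of line bundles $\mathcal{L}_k$ and $\mathcal{L}_k(-\mathrm{cusps})$ on $X$. The condition $\chi(-1)=(-1)^k$ is exactly what makes the automorphy factor $\chi(d)(cz+d)^k$ compatible with $-I\in\varGamma_0(N)$, so these bundles are well defined.

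\textbf{Duality step.} For $f\in\mathcal{S}_k(\varGamma_0(N),\chi)$ and $g\in\mathcal{M}_{2-k}(\varGamma_0(N),\bar\chi)$, the product $fg\,dz$ is a holomorphic differential on $X$. Hence the line bundle for cusp forms of weight $k$ is dual to that for modular forms of weight $2-k$, twisted by the canonical sheaf $\Omega_X$. Riemann--Roch then gives
\[
\dim\mathcal{S}_k-\dim\mathcal{M}_{2-k}=\deg\mathcal{L}_k(-\mathrm{cusps})+1-g.
\]
Since $\chi$ is quadratic in our application, $\bar\chi=\chi$. The left side is therefore exactly the left side of Lemma~\ref{lem:1}.

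\textbf{Degree step.} I would compute the degree by a valence formula. A nonzero meromorphic section has total order
\[
\frac{k}{12}[\mathrm{SL}_2(\mathbb{Z}):\varGamma_0(N)]=\frac{kN}{12}\prod_{p\mid N}(1+p^{-1}),
\]
counted with fractional weights at the elliptic points and cusps. Passing to honest divisors on $X$ means taking integer parts of the local orders, so I would handle each type of special point in turn.
\begin{itemize}
\item \emph{Elliptic points of order $2$.} These are counted by the solutions of $x^2+1\equiv0\pmod N$. The local order is forced mod $1$ by $\chi(x)$ and $k \bmod 4$, and this produces the $\nu_k\sum\chi(x)$ term.
\item \emph{Elliptic points of order $3$.} The same analysis with $x^2+x+1\equiv0\pmod N$ and $k\bmod 3$ produces the $\mu_k$ term.
\item \emph{Cusps.} A cusp $a/c$ with $c\mid N$ is \emph{regular} for $\chi$ exactly when $\chi$ is trivial on the stabilizer, which amounts to a condition relating $\gcd(c,N/c)$ and the conductor. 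At regular cusps the fractional part is $0$; at irregular cusps it is $1/2$.
\end{itemize}
Combining these with $2g-2$ from Riemann--Hurwitz (which contributes the $-1$ in $k-1$ and the genus terms) should collapse everything into the stated expression.

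\textbf{Main obstacle.} The hardest step is the cusp count: showing that the number of cusps, weighted by regularity, equals $\frac12\prod_p\lambda(r_p,s_p,p)$. I would first show that the count is multiplicative over $p\mid N$, by the Chinese remainder theorem applied to cusps $a/c$ with $c\mid N$ and $a$ modulo $\gcd(c,N/c)$. I would then handle the local problem at $p^{r_p}$ by casework on $c=p^j$:
\begin{itemize}
\item the cusp class has $\varphi(p^{\min(j,r_p-j)})$ members;
\item its regularity depends on whether $\min(j,r_p-j)\ge s_p$.
\end{itemize}
Summing over $j$ gives $p^{r'}+p^{r'-1}$, $2p^{r'}$, or $2p^{r_p-s_p}$ in the three cases for $\lambda$. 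The factor $\frac12$ and the sign come from the cusp contribution $-\#\mathrm{cusps}$ combined with the $\tfrac12$ from irregular cusps.
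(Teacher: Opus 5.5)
The paper gives no proof of this lemma; it only cites Cohen--Oesterl\'e. Your Riemann--Roch/Serre-duality outline is the standard derivation behind that theorem, and your bookkeeping correctly reduces the cusp contribution to $-\tfrac12\#\{\text{regular cusps}\}$. The genuine gap is in the step you call the crux: the regularity criterion you state is wrong, and with it the sum over $j$ does not give $\lambda$.

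The stabilizer of the cusp $a/c$ ($c\mid N$) in $\varGamma_0(N)$ is generated, up to $\pm I$, by $\left(\begin{smallmatrix}1-ach & a^2h\\ -c^2h & 1+ach\end{smallmatrix}\right)$ with $h=N/\gcd(c^2,N)$. So the cusp is regular iff $\chi(1+ach)=1$. For $c=p^j$ one has $v_p(ch)=\max(j,r_p-j)$. Hence the cusp is regular iff $s_p\le\max(j,r_p-j)$, i.e.\ the conductor must divide $N/\gcd(c,N/c)=\operatorname{lcm}(c,N/c)$; it is not controlled by $\gcd(c,N/c)$.

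Your condition $\min(j,r_p-j)\ge s_p$ fails in two ways:
\begin{itemize}
\item It yields no regular cusps whenever $2s_p>r_p$. For $N=8$, $\chi=\chi_8$, the cusps $\infty$ and $0$ are in fact regular, matching $\lambda(3,3,2)=2$.
\item For $r_p=2$, $s_p=1$ it gives $p-1$ instead of $\lambda=p+1$.
\end{itemize}
With the correct criterion, every cusp is regular when $2s_p\le r_p$, which gives the first two cases of $\lambda$. When $2s_p>r_p$, the regular cusps are those with $j\le r_p-s_p$ or $j\ge s_p$, giving $2\sum_{i=0}^{r_p-s_p}\varphi(p^i)=2p^{r_p-s_p}$.

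Multiplicativity also needs more than the Chinese remainder theorem. Each local factor $\chi_p(1+ach)$ has $p$-power order (for quadratic $\chi$ it is simply $1$ at odd $p$), so the product is $1$ iff every factor is.

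A second, smaller gap: you only treat quadratic $\chi$, whereas Lemma~\ref{lem:1} is stated for every $\chi$ with $\chi(-1)=(-1)^k$. Two additions are needed.
\begin{itemize}
\item Duality naturally pairs $\mathcal{S}_k(\varGamma_0(N),\chi)$ with $\mathcal{M}_{2-k}(\varGamma_0(N),\bar\chi)$. The lemma's left side still follows, because $f(z)\mapsto\overline{f(-\bar z)}$ is an antilinear bijection $\mathcal{M}_{2-k}(\varGamma_0(N),\bar\chi)\to\mathcal{M}_{2-k}(\varGamma_0(N),\chi)$.
\item At an irregular cusp the fractional order is $\kappa_s\in(0,1)$ with $e^{2\pi i\kappa_s}=\chi(1+ach)$, not necessarily $\tfrac12$. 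The contributions $\tfrac12-\kappa_s$ still cancel, because $a/c$ and $-a/c$ have conjugate multipliers, as $(1+ach)(1-ach)\equiv1\pmod N$. If these two cusps coincide, then $\kappa_s=\tfrac12$.
\end{itemize}
For the paper's applications the quadratic case suffices, but these points are needed for the lemma as stated.
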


For integers $ k \geqslant 2 $, 
Lemma \ref{lem:1} computes $ \dim \mathcal{M}_{k}(\varGamma_{0}(N),\chi) $
and $\dim \mathcal{S}_{k}(\varGamma_{0}(N),\chi) $ . 
For $ k = 1 $, the dimensions are available from the LMFDB website~\cite{lmfdb}.

For half-integral weights, 
the next lemma gives the dimensions 
of the spaces of modular forms and cusp forms,
see \cite[Théorème 2]{CO1977} or \cite[Theorem 1.56]{Ono2004}.

\begin{lemma}\label{lem:2}
	If $k \in \frac{1}{2} + \mathbb{Z}$, $N \in \mathbb{N}^*,4\mid N$ and $\chi$ is a Dirichlet character modulo $N$ 
	for which $\chi(-1) = 1$, then
	\begin{align*}
		\dim \mathcal{S}_{k}(\varGamma_{0}(N), \chi) - 
		\dim \mathcal{M}_{2-k}(\varGamma_{0}(N), \chi)
		= \frac{(k-1)N}{12} \prod_{p\mid N} (1 + p^{-1}) - 
		\frac{\zeta(k, N, \chi)}{2} \prod_{\substack{p\mid N \\ p \neq 2}} \lambda(r_{p}, s_{p}, p),
	\end{align*}
	where $r_p$, $s_p$, $\lambda(r_p, s_p, p)$ are the same as in Lemma \ref{lem:1}, 
	and where $\zeta(k, N, \chi)$ is defined by the following table
	$$\renewcommand{\arraystretch}{1.3}
	\begin{tabular}{|c|c|c|c|c|}
	\hline
	\multicolumn{4}{|c|}{$ r_2 \geqslant 4 $} & $ \zeta(k, N, \chi) = \lambda (r_2, s_2, 2) $ \\
	\hline
	\multicolumn{4}{|c|}{$ r_2 = 3 $} & $ \zeta(k, N, \chi) = 3 $ \\
	\hline
	\multirow{5}{*}{$ r_2 = 2 $} & \multicolumn{3}{|c|}{Condition (C)} & $ \zeta(k, N, \chi) = 2 $ \\
	\cline{2-5}
	& \multirow{4}{*}{non (C)} &
	\multirow{2}{*}{$ k - \frac{1}{2} \in \mathbb{Z} $} & $ s_2 = 0 $ & $ \zeta(k, N, \chi) = \frac{3}{2} $ \\
	\cline{4-5}
	& & & $ s_2 = 2 $ & $ \zeta(k, N, \chi) = \frac{5}{2} $ \\
	\cline{3-5}
	& & \multirow{2}{*}{$ k - \frac{3}{2} \in \mathbb{Z} $} & $ s_2 = 0 $ & $ \zeta(k, N, \chi) = \frac{5}{2} $ \\
	\cline{4-5}
	& & & $ s_2 = 2 $ & $ \zeta(k, N, \chi) = \frac{3}{2} $ \\
	\hline
	\end{tabular}
	$$
	Condition (C) is the following condition
	$$
	\text{(C)} \iff \exists p \text{ prime},\ p \equiv 3 \pmod{4},\ p \mid N,\ r_p \text{ odd or } 0 < r_p < 2s_p.
	$$
	Consequently
	$$
	\text{non (C)} \iff \bigl(\forall p \text{ prime}\bigr),\ \bigl(p \equiv 3 \pmod{4} \text{ and } p \mid N \implies r_p \text{ even and } r_p \geq 2s_p\bigr).
	$$
\end{lemma}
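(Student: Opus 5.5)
This is Cohen–Oesterlé's theorem \cite[Théorème 2]{CO1977}, and in the paper it is simply invoked. If I were to prove it, I would use Riemann–Roch on $X_0(N)$, applied to the line bundle attached to the weight-$k$ multiplier system $\nu_\theta^{2k}\chi$. Here $\nu_\theta$ is the theta multiplier implicit in the slash operator above. Because $4\mid N$, the group $\varGamma_0(N)$ contains no elliptic elements and no $-I$ issues arise with $\chi(-1)=1$. So only two kinds of term appear: the Euler characteristic/volume term and the cusp corrections.

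\textbf{Step 1 (reduction to a divisor count).} Fix a nonzero meromorphic form $g$ of weight $k$ with the same multiplier; one could take $g=\theta^{2k}\cdot h$ for a suitable $h$. For $f\in\mathcal{M}_k(\varGamma_0(N),\chi)$, the quotient $f/g$ is a function on $X_0(N)$. At each cusp $\mathfrak{c}$ of width $w_\mathfrak{c}$, the local parameter is $q_\mathfrak{c}^{1/w_\mathfrak{c}}$ twisted by the multiplier. The multiplier at $\mathfrak{c}$ has the form $e^{2\pi i\kappa_\mathfrak{c}}$ with $0\leqslant\kappa_\mathfrak{c}<1$, so expansions there lie in $q^{\kappa_\mathfrak{c}}\mathbb{C}[[q]]$. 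Applying Riemann–Roch to the resulting divisor of degree $\frac{k}{12}[\mathrm{SL}_2(\mathbb{Z}):\varGamma_0(N)]-\sum_\mathfrak{c}\kappa_\mathfrak{c}$, together with Serre duality (which pairs weight $k$ with weight $2-k$ and $\chi$ with $\bar\chi=\chi$ for quadratic $\chi$), gives
\[
\dim\mathcal{S}_k-\dim\mathcal{M}_{2-k}=\frac{(k-1)}{12}[\mathrm{SL}_2(\mathbb{Z}):\varGamma_0(N)]-\frac12\#\{\text{regular cusps}\}+\sum_{\mathfrak{c}\text{ irregular}}\bigl(\tfrac12-\kappa_\mathfrak{c}\bigr)\cdot(\text{sign corrections}).
\]
I will organise the cusp terms as $\tfrac12\sum_\mathfrak{c}c(\mathfrak{c})$. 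Since the index equals $N\prod_{p\mid N}(1+p^{-1})$, the first term of the lemma follows.

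\textbf{Step 2 (local cusp contributions).} The cusps of $\varGamma_0(N)$ are $a/c$ with $c\mid N$, counted with multiplicity $\varphi(\gcd(c,N/c))$. Their contributions factor multiplicatively over primes $p\mid N$. For odd $p$, the theta multiplier is locally a quadratic character and combines with $\chi_p$. Counting the cusps at which the total multiplier is trivial gives exactly $\lambda(r_p,s_p,p)$, as in the integral-weight case (Lemma~\ref{lem:1}). For $p=2$, I will compute $\kappa_\mathfrak{c}$ at the $2$-adic cusps $1/2^j$, $0\leqslant j\leqslant r_2$, using the transformation of $\theta$ under the Atkin–Lehner/Fricke matrices. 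When $r_2\geqslant4$, every such cusp is regular and the count is $\lambda(r_2,s_2,2)$. When $r_2=3$, the cusp $1/2$ is irregular with $\kappa=\tfrac12$, which gives the constant $3$. When $r_2=2$, the cusp $1/2$ has $\kappa\in\{\tfrac14,\tfrac34\}$. Its exact value depends on $k\bmod 2$, on $s_2$, and on the product of the odd-prime Jacobi factors evaluated at this cusp; this product is where Condition (C) enters, through the sign $\left(\frac{-1}{\cdot}\right)$ at primes $p\equiv3\pmod 4$.

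\textbf{Main obstacle.} The hardest step is the $r_2=2$ case. There I have to track precisely the eighth root of unity $\varepsilon_d^{2k}\left(\frac{c}{d}\right)$ at the irregular cusp $1/2$, and show that the combined odd-part sign is trivial exactly under non-(C). Under (C) the odd factors pair off and average the two values $\tfrac14,\tfrac34$ to give $2$. Otherwise $\kappa=\tfrac14$ or $\tfrac34$, giving $\tfrac32$ or $\tfrac52$, with the assignment swapped between $k\equiv\tfrac12$ and $k\equiv\tfrac32 \pmod 2$ and between $s_2=0$ and $s_2=2$. My first attempt will be to compute this directly from the generator $\begin{pmatrix}1&0\\2&1\end{pmatrix}$ conjugated to $\infty$. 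As a sanity check I will compare the resulting table against the known $N=4$ dimensions coming from \eqref{N=4}.
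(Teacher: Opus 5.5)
Your opening sentence is exactly the paper's proof. The lemma is not proved there; it is quoted from \cite[Th\'eor\`eme~2]{CO1977} (and \cite[Theorem 1.56]{Ono2004}), so as a justification the citation suffices.

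Your Step~1 is also the right framework, and it needs no ``sign corrections''. Since $\varGamma_0(N)$ has no elliptic points when $4\mid N$, Riemann--Roch with $2g-2=\mu/6-\#\{\text{cusps}\}$ and $\mu=[\mathrm{SL}_2(\mathbb{Z}):\varGamma_0(N)]$ gives exactly
\[
\dim\mathcal{S}_k(\varGamma_0(N),\chi)-\dim\mathcal{M}_{2-k}(\varGamma_0(N),\bar\chi)=\frac{(k-1)\mu}{12}-\frac12\#\{\mathfrak{c}:\kappa_\mathfrak{c}=0\}+\sum_{\kappa_\mathfrak{c}\neq0}\Bigl(\frac12-\kappa_\mathfrak{c}\Bigr).
\]
The lemma allows non-real $\chi$, so you should replace $\bar\chi$ by $\chi$ via $f(z)\mapsto\overline{f(-\bar z)}$ rather than by assuming $\chi$ quadratic.

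As a proof, however, Step~2 does not go through as written.
\begin{enumerate}
\item[(i)] For $r_2\geqslant4$ it is false that every $2$-adic cusp is regular. For $N=16$ and $\chi=\chi_8$, the stabilizers of $1/4$ and $3/4$ have lower-right entries $5$ and $13$, so $\kappa=\frac12$ at both. Consistently, $\lambda(4,3,2)=4$ is smaller than the $6$ cusps of $\varGamma_0(16)$.
\item[(ii)] For $r_2=3$, which cusp is irregular depends on $\chi_2$. For $N=8$ and $\chi=\chi_8$ the cusp $1/2$ is regular, since the theta shift $\frac12$ cancels $\chi_8(5)=-1$, while $1/4$ carries $\kappa=\frac12$. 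The constant $3$ survives, but not for the reason you give.
\item[(iii)] When $r_2=2$ the cusp contributions do not factor over primes; that is precisely why $\zeta$ depends on (C). Your ``averaging'' heuristic only covers primes $p\equiv3\pmod4$ with $r_p$ odd. The branch $0<r_p<2s_p$ is vacuous for quadratic $\chi$, so your restriction hides it, and it works differently. Take $N=36$, $\chi$ of order $3$ and conductor $9$, and $k=\frac12$. The cusps $1/2$ and $1/18$ have $\kappa=\frac14$, while $1/6$ and $5/6$ have $\kappa=\frac7{12}$ and $\frac{11}{12}$. Here all widths are $\equiv1\pmod4$, so the cancellation giving $\zeta=2$ comes from the character values, not from widths.
\end{enumerate}

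The clean way to organize all of this is the factorization you hint at in Step~1. Write $f=\theta^{2k}F$ with $F$ of weight $0$ and character $\chi$, so that $e^{2\pi i\kappa_\mathfrak{c}(F)}=\chi(d_\mathfrak{c})$ for the stabilizer generator at $\mathfrak{c}$. The form $\theta$ vanishes only at cusps lying above $1/2\in X_0(4)$, and there to order $w_\mathfrak{c}/4$. Hence $\kappa_\mathfrak{c}(f)\equiv\kappa_\mathfrak{c}(F)+kw_\mathfrak{c}/2\pmod1$ at cusps $a/c$ with $v_2(c)=1$, and $\kappa_\mathfrak{c}(f)=\kappa_\mathfrak{c}(F)$ at all other cusps. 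At the cusps with $v_2(c)=1$ one has $v_2(w_\mathfrak{c})=r_2-2$, which gives three consequences:
\begin{enumerate}
\item[(a)] for $r_2\geqslant4$ the problem reduces to the integral-weight count;
\item[(b)] for $r_2=3$ there is a uniform shift by $\frac12$;
\item[(c)] for $r_2=2$ one is left with an explicit sum over the cusps above $1/2$, in which (C) enters through $w_\mathfrak{c}\bmod 4$ and through $\chi(d_\mathfrak{c})$.
\end{enumerate}
No eighth roots of unity need to be tracked.
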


For $ k\in \frac{1}{2}+\mathbb{Z}, k \geqslant \frac{5}{2} $, 
Lemma \ref{lem:2} computes $ \dim \mathcal{M}_{k}(\varGamma_{0}(N),\chi) $
and $ \dim \mathcal{S}_{k}(\varGamma_{0}(N),\chi) $.
For $ k = \frac{1}{2},\frac{3}{2} $, 
the following two lemmas are also needed,
see \cite[Theorem A, B]{SS1977} or \cite[Theorem 1.45, 1.46]{Ono2004}.

\begin{lemma}\label{lem:3}
	Suppose that $N \in \mathbb{N}^*,4\mid N$ and that $\chi$ is 
	an even Dirichlet character modulo $N$. 
	Let $\Omega(N,\chi)$ denote the set of pairs $(\psi,t)$, 
	where $t$ is a positive integer, 
	and where $\psi$ is an even primitive Dirichlet character 
	with conductor $r(\psi)$ satisfying the following
	\begin{enumerate}
		\item We have $4r(\psi)^{2}t \mid N$.
		\item We have $\chi(n) = \psi(n)\left(\frac{t}{n}\right)$ for every integer $n$ coprime to $N$.
	\end{enumerate}
	Then the theta functions 
	$\theta_{\psi,t}(z)=\sum \limits_{n=-\infty}^{\infty}\psi(n)q^{tn^2}$, 
	with $(\psi,t) \in \Omega(N,\chi)$ ,
	make up a basis of $\mathcal{M}_{\frac{1}{2}}(\varGamma_{0}(N),\chi)$.
\end{lemma}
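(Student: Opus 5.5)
Since this statement is the Serre--Stark theorem, which the paper quotes from \cite{SS1977} (see also \cite[Theorem 1.45]{Ono2004}), I would follow the structure of their argument. It splits into three parts: membership, linear independence, and spanning, with almost all the work in the last.

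\emph{Membership.} For each $(\psi,t)\in\Omega(N,\chi)$, I would first show that $\theta_{\psi,t}\in\mathcal{M}_{\frac12}(\varGamma_0(N),\chi)$. The starting point is the classical transformation law of $\theta(z)=\sum_n q^{n^2}$ under $\varGamma_0(4)$, which is exactly what the normalisation by $\varepsilon_d$ and $\left(\frac{c}{d}\right)$ in the slash operator encodes. Twisting by an even primitive $\psi$ of conductor $r=r(\psi)$ is done by writing $\theta_\psi$ as a Gauss-sum combination of translates $\theta(z+a/r)$. This gives $\theta_\psi\in\mathcal{M}_{\frac12}(\varGamma_0(4r^2),\psi)$. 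Replacing $z$ by $tz$ then moves us to level $4r^2t$ and multiplies the character by $\left(\frac{t}{\cdot}\right)$. Conditions (1) and (2) in the definition of $\Omega(N,\chi)$ are precisely what make the level divide $N$ and the character equal $\chi$. Holomorphy at all cusps follows from the theta inversion formula.

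\emph{Linear independence.} The $q$-expansion of $\theta_{\psi,t}$ is supported on $\{tn^2\}$. Two different values of $t$ can give overlapping supports only if $t/t'$ is a rational square. I would therefore group the pairs by the squarefree part of $t$. Within a group I would compare coefficients at $tp^2$ for primes $p\nmid N$, which equal $2\psi(p)$. Distinct characters are separated by these values, via independence of characters (Dirichlet density). Distinct $t$ with the same squarefree part are separated by the minimal exponent appearing in the support.

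\emph{Spanning, the main obstacle.} Here one must show that every $f\in\mathcal{M}_{\frac12}(\varGamma_0(N),\chi)$ is a combination of these thetas.
\begin{enumerate}
\item Decompose $f$ under the commuting Hecke operators $T(p^2)$, $p\nmid N$.
\item Show that if $a(m)\ne0$ with $m=tn^2$ and $t$ squarefree, then the coefficients along $t\ell^2$ satisfy the relation forced by the weight-$\frac12$ Hecke action. For weight $\frac12$ the Shimura correspondence lands in weight $0$, so the eigenvalues are forced to be $\psi(p)(1+p^{-1})$ times a normalisation, and the coefficients $a(tn^2)$ become multiplicative in $n$, of the form $\psi(n)\cdot(\text{const})$ away from $N$.
\item Prove that $a(m)=0$ unless $m$ lies in $t_0\cdot(\text{square})$ for finitely many $t_0\mid N$.
\end{enumerate}
Step 3 is the genuinely hard point. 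Following Serre--Stark, I would argue $p$-adically, prime by prime. For $p\nmid N$, the form $f$ is fixed by a large subgroup, via the relation between $U_p$, $V_p$ and $T(p^2)$ in weight $\frac12$. I would then show that a nonzero coefficient at $m$ with $p\,\|\,m$ contradicts the eigenvalue being $1+p^{-1}$, because the eigenvalue bound obtained from the growth of coefficients (Rankin-type estimate $a(n)=O(n^{1/4+\epsilon})$) is incompatible. Equivalently, one can use the fact that $f\theta$ or $f^2$ has weight $1$ to control the growth.

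Once the support is known to lie in $\bigcup t_0\mathbb{Z}^2$, subtracting the appropriate combination of $\theta_{\psi,t}$ kills $f$, since a weight-$\frac12$ form supported on $t\mathbb{Z}^2$ with multiplicative $\psi$-coefficients is forced to be $\theta_{\psi,t}$. Conductor bookkeeping then shows that only pairs in $\Omega(N,\chi)$ occur.
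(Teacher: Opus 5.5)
The paper does not prove this lemma at all. It is quoted from Serre--Stark \cite{SS1977} (see also \cite[Theorem 1.45]{Ono2004}), so citing that theorem is all that is required. Your membership and linear-independence parts are fine. Your sketch of the spanning part, however, has genuine gaps, and spanning is the entire content of Serre--Stark.

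In Step 2 you obtain the eigenvalues $c_p=\psi(p)(1+p^{-1})$ of $T(p^2)$ from ``the Shimura correspondence landing in weight $0$''. Shimura's lift is only available for weight $\lambda+\frac12$ with $\lambda\geqslant 1$, so this is a heuristic, not an argument. Indeed, the statement that every $T(p^2)$-eigensystem in weight $\frac12$ is that of a theta series is essentially the theorem you are trying to prove. Also, $\mathcal{M}_{1/2}(\varGamma_0(N),\chi)$ contains non-cusp forms, so you cannot assume the $T(p^2)$ are simultaneously diagonalizable on it.

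Step 3 fails even if Step 2 is granted. Take $c=\psi(p)(1+p^{-1})$ and $\chi(p)^2=\psi(p)^2$ in the relation $c\,a(m)=a(p^2m)+\chi(p)\left(\frac{m}{p}\right)p^{-1}a(m)+\chi(p)^2p^{-1}a(m/p^2)$. For $p\,\|\,m$ this gives
\[
\sum_{j\geqslant 0}a(p^{2j}m)X^j=\frac{a(m)}{(1-\psi(p)X)(1-\psi(p)p^{-1}X)},\qquad a(p^{2j}m)=\psi(p)^j\,\frac{1-p^{-j-1}}{1-p^{-1}}\,a(m),
\]
which is bounded. Similarly, for $p\nmid m$ with $\chi(p)\left(\frac{m}{p}\right)=-\psi(p)$, the generating function is $a(m)(1+\psi(p)p^{-1}X)/\bigl((1-\psi(p)X)(1-\psi(p)p^{-1}X)\bigr)$, again bounded. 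So no estimate like $a(n)=O(n^{1/4+\varepsilon})$ is contradicted. The $T(p^2)$-recursion together with size bounds is perfectly consistent with nonzero coefficients in the ``wrong'' square classes at $p$. Your remark that $f$ is ``fixed by a large subgroup'' points the right way, but the growth contradiction you then extract does not exist.

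For theta series, the vanishing comes from an extra identity, $\theta_{\psi,t}\mid U(p^2)=\psi(p)\theta_{\psi,t}$. Compared with the $T(p^2)$-relation, it yields $\psi(p)p^{-1}a(m)=0$ for $p\,\|\,m$ and $\bigl(\psi(p)-\chi(p)\left(\frac{m}{p}\right)\bigr)a(m)=0$ for $p\nmid m$. Proving something of this kind for an arbitrary $f$ needs information the $T(p^2)$-recursion does not see, namely the full $\varGamma_0(N)$-invariance at $p$. Supplying that is the heart of Serre--Stark, and your sketch does not do it.

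Finally, your last step also hides work. Multiplicativity in $n$ prime to $N$ says nothing about $a(tn^2)$ when $n$ shares primes with $N$; compare $\theta_{\mathbf{1},1}$ and $\theta_{\mathbf{1},4}$, both in $\mathcal{M}_{1/2}(\varGamma_0(16))$. Showing that only pairs with $4r(\psi)^2t\mid N$ can occur is a real lemma, not bookkeeping.
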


Every Dirichlet character $\psi$ of conductor $r(\psi)$ 
may be written uniquely as
$\psi = \prod \limits_{p \mid r(\psi)} \psi_p,$
where $\psi_p$ is a Dirichlet character 
whose conductor is the highest power of the prime $p$ dividing $r(\psi)$. 
We say that $\psi$ is \emph{totally even} if $\psi_p(-1) = 1$ 
for every prime $p \mid r(\psi)$.

\begin{lemma}\label{lem:4}
	Suppose that $N \in \mathbb{N}^*,4\mid N$ and that $\chi$ is 
	an even Dirichlet character modulo $N$. 
	The set of theta functions $\theta_{\psi, t}$, as $(\psi, t)$ varies 
	over the elements $\Omega(N, \chi)$ 
	for which $\psi$ is not totally even, forms a basis of 
	$\mathcal{S}_{\frac{1}{2}}(\varGamma_0(N), \chi)$,
	where $\theta_{\psi, t}$, $\Omega(N, \chi)$ are the same as in Lemma \ref{lem:3}. 
\end{lemma}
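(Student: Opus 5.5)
Since the weight-$\frac12$ space is already described by Lemma~\ref{lem:3}, the plan is to decide which linear combinations of the basis $\{\theta_{\psi,t}\}_{(\psi,t)\in\Omega(N,\chi)}$ vanish at every cusp of $\varGamma_0(N)$. This follows the approach of Serre and Stark~\cite{SS1977}. Since the $\theta_{\psi,t}$ are linearly independent, it suffices to prove two things. First, every $\theta_{\psi,t}$ with $\psi$ not totally even is a cusp form. Second, no nonzero combination $\sum c_{\psi,t}\,\theta_{\psi,t}$ is a cusp form when the sum runs over the pairs with $\psi$ totally even. The space $\mathcal{S}_{\frac12}(\varGamma_0(N),\chi)$ would then be exactly the span of the first family.

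\textbf{Step 1: cuspidality of the non-totally-even thetas.} For a cusp $a/c$, pick $\gamma\in\mathrm{SL}_2(\mathbb{Z})$ with $\gamma\infty=a/c$. Write $n=n_0+mM$ with $M$ a multiple of $r(\psi)$ and of the denominators involved, and apply Poisson summation to $\theta_{\psi,t}|\gamma$. The constant term of the expansion at $a/c$ is then a nonzero scalar multiple of a finite Gauss-type sum
\[
G(\psi,t;a,c)=\sum_{n \bmod M}\psi(n)\,e\!\left(\frac{a t n^{2}}{c}\right).
\]
By the Chinese remainder theorem this sum factors over the primes $p\mid r(\psi)$ into local sums involving $\psi_p$. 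If $\psi_p(-1)=-1$ for some $p$, the substitution $n\mapsto -n$ in the $p$-local sum leaves $e(atn^2/c)$ unchanged and multiplies $\psi_p(n)$ by $-1$. Hence that local factor vanishes, so $\theta_{\psi,t}$ has zero constant term at every cusp and lies in $\mathcal{S}_{\frac12}$. (At $\infty$ the constant term is $\psi(0)=0$ unless $r(\psi)=1$, which is consistent with this.)

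\textbf{Step 2: no cusp forms among the totally even thetas.} For totally even $\psi$, every local factor is a classical quadratic Gauss sum twisted by an even character. At suitable cusps these are nonzero; for instance, at cusps $1/c$ with $c\mid N$ chosen relative to $t$ and $r(\psi)$, they are explicit products of Gauss sums $\tau(\psi_p)$ times powers of $\sqrt{p}$. The goal is to show that the matrix
\[
\bigl(\text{constant term of }\theta_{\psi,t}\text{ at the cusp } 1/c\bigr)_{(\psi,t),\,c}
\]
has full row rank. The first thing I would try is to order the pairs $(\psi,t)$ by $t\,r(\psi)^2$ and choose, for each pair, a cusp $1/c$ with $c$ adapted to it, namely $c=N/(t\,r(\psi)^{2})$ or a variant. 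The expectation is that the matrix becomes triangular with nonzero diagonal, because the constant term of $\theta_{\psi',t'}$ at the cusp chosen for $(\psi,t)$ should vanish whenever $t'r(\psi')^2$ is larger in the ordering. If this works, any cuspidal combination of totally even thetas must have all coefficients $0$.

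\textbf{Main obstacle.} I expect Step~2 to be the main obstacle, specifically the bookkeeping of the constant terms at the various cusps. The issue is showing nonvanishing on the diagonal and vanishing off it, uniformly in $N$, $t$ and $\psi$, including at $p=2$, where the theta multiplier $\varepsilon_d$ and the factor $4$ in $4r(\psi)^2t\mid N$ complicate the local Gauss sums. If the triangularity argument is too messy, the fallback is a dimension count. Step~1 gives $\dim \mathcal{S}_{\frac12}\geqslant \#\{\text{non-totally-even pairs}\}$. Separately, Lemma~\ref{lem:2} with $k=\frac32$ (via $\dim\mathcal{S}_{3/2}-\dim\mathcal{M}_{1/2}$), together with the count of cusps regular for $\chi$ in weight $\frac12$, computes $\dim\mathcal{M}_{\frac12}-\dim\mathcal{S}_{\frac12}$. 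Comparing this with the number of totally even pairs in $\Omega(N,\chi)$ would give equality.
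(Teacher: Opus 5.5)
\textbf{There is a genuine gap: your proposal proves only the easy half of the lemma, and neither your Step 2 nor your fallback supplies the hard half.}

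Your Step 1 is sound. The constant term of $\theta_{\psi,t}$ at $a/c$ is a nonzero multiple of $\sum_{n \bmod M}\psi(n)e(atn^2/c)$. That sum factors over primes by the Chinese remainder theorem, and $n\mapsto -n$ kills the $p$-factor whenever $\psi_p(-1)=-1$. This gives the inclusion of the span of the non-totally-even $\theta_{\psi,t}$ in $\mathcal{S}_{1/2}(\varGamma_0(N),\chi)$.

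The substance of the lemma is the reverse inclusion: no nonzero combination of totally even $\theta_{\psi,t}$ is cuspidal. Your Step 2 only records an expectation here. As stated, that expectation already fails in the smallest nontrivial case. For $N=16$ and $\chi$ trivial, $\Omega(16,\mathbf{1})=\{(\mathbf{1},1),(\mathbf{1},4)\}$, and your recipe $c=N/(t\,r(\psi)^2)$ selects the cusps $1/16\sim\infty$ and $1/4$. Both $\theta(z)$ and $\theta(4z)$ have constant term $1$ at $\infty$. At $1/4$ the relevant sums are $\sum_{n\bmod 4}e(n^2/4)=2+2i$ and $\sum_{n\bmod M}e(n^2)=M$. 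So all four entries are nonzero, and the matrix is triangular in neither order. What works here is a cusp like $1/2$, where $\theta(z)$ vanishes because $\sum_{n\bmod 2}e(n^2/2)=0$ but $\theta(4z)$ does not. Choosing such cusps, and proving diagonal nonvanishing and off-diagonal vanishing uniformly in $N,t,\psi$ including the prime $2$, is exactly the work still missing.

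The fallback dimension count does not close the gap. Lemma~\ref{lem:2} at $k=\frac32$ relates $\dim\mathcal{S}_{3/2}$ to $\dim\mathcal{M}_{1/2}$, and at $k=\frac12$ it relates $\dim\mathcal{S}_{1/2}$ to $\dim\mathcal{M}_{3/2}$. Adding the two, the $(k-1)$ terms cancel, and you learn only the sum $(\dim\mathcal{M}_{1/2}-\dim\mathcal{S}_{1/2})+(\dim\mathcal{M}_{3/2}-\dim\mathcal{S}_{3/2})$. A count of regular cusps also sees only this sum, not how it splits between weights $\frac12$ and $\frac32$. For example, for $N=4$ with $\chi$ trivial there are two regular cusps, $\infty$ and $0$. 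Yet $\dim\mathcal{M}_{1/2}-\dim\mathcal{S}_{1/2}=1$ (only $\theta$), and the other unit goes to $\theta^3$ in weight $\frac32$. Riemann--Roch cannot provide this split in weights $\frac12,\frac32$. That is why the paper gives no proof of Lemma~\ref{lem:4}: it quotes Serre--Stark's Theorem B (Ono, Theorem 1.46) and then uses Lemmas~\ref{lem:3}--\ref{lem:4} to feed Lemma~\ref{lem:2}, not the other way round. Your fallback runs this logic in a circle.

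One way to make Step 2 tractable is to use the Hecke operators $T_{p^2}$ with $p\nmid N$ first. They preserve $\mathcal{S}_{1/2}$ and act on $\theta_{\psi,t}$ by the scalar $\psi(p)(1+p^{-1})$. This reduces the problem to a single totally even $\psi$. You would then still have to invert the constant terms of $\theta_\psi(tz)$, as $t$ varies, at cusps $1/c$ with $c\mid N$. Otherwise, simply cite Serre--Stark.
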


\subsection{Operators on modular forms}
If $f(z)=\sum _{n=0}^{\infty} c(n)q^n$, $d\in \mathbb{N}^*$, 
then the $V$-operator $V(d)$ is defined by
$$
f(z)\mid V(d) = \sum_{n = 0}^{\infty} c(n) q^{dn}, \qquad f\mid V(d) (z)=f(dz).
$$
Define the $U$-operator $U(d)$ by
$$
f(z)\mid  U(d)= \sum_{n =0}^{\infty} c(dn) q^n.
$$
If $\psi$ is a Dirichlet character, then the $\psi$-twist of $f(z)$ is defined by
$$f_{\psi}(z) = f(z) \otimes \psi=\sum_{n=0}^{\infty}\psi(n)c(n)q^n.$$
If $M\in \mathbb{N}^*$, $m \in \mathbb{Z}$,
define the operator $S_{M,m}$ by
$$
f(z)\mid  S_{M,m}= \sum_{\substack{n \in \mathbb{N}\\ n\equiv m\pmod{M}}} c(n) q^n.
$$
To describe the action of these operators on modular forms, 
we introduce $ \operatorname{rad}(n) = \prod_{p \mid n} p $
and let $ \operatorname{dis}(n) $ denote 
the discriminant of $ \mathbb{Q}(\sqrt{n}) $.
\begin{lemma}\label{lem:5}
	Let $k \in \mathbb{Z}$, $N \in \mathbb{N}^*$ ,
	$\chi$ be a Dirichlet character modulo $N$,
	and $f(z) \in \mathcal{M}_k(\varGamma_0(N), \chi)$
	(resp. $\mathcal{S}_k(\varGamma_0(N), \chi)$).
	\begin{enumerate}
	\item If $d\in \mathbb{N}^*$, then
	$f(z)\mid V(d) \in \mathcal{M}_k(\varGamma_0(Nd), \chi)$
	(resp. $\mathcal{S}_k(\varGamma_0(Nd), \chi)$).

	\item If $d\in \mathbb{N}^*$ and $\operatorname{rad}(d) \mid N$, then
	$f(z) \mid U(d) \in \mathcal{M}_k(\varGamma_0(N), \chi)$
	(resp. $\mathcal{S}_k(\varGamma_0(N), \chi)$).

	\item If $\psi$ is a Dirichlet character with conductor $m$, then
	$f_{\psi}(z) \in \mathcal{M}_k(\varGamma_0(Nm^2), \chi\psi^2)$
	(resp. $\mathcal{S}_k(\varGamma_0(Nm^2), \chi\psi^2)$).
	In particular, if $\psi$ is quadratic,
	then $f_{\psi}(z) \in \mathcal{M}_k(\varGamma_0(Nm^2), \chi)$.

	\item If $A,B \in \mathbb{N}^*$, $C \in \mathbb{Z}$, $\gcd(B,C)=1$, $B \mid 24$,
	and let $N_1=\operatorname{lcm}(N,\operatorname{rad}(A))$,
	then $f(z)\mid S_{AB,AC} \in 
	\mathcal{M}_k(\varGamma_0(N_1AB^2), \chi)$
	(resp. $\mathcal{S}_k(\varGamma_0(N_1AB^2), \chi)$).
	\end{enumerate}
\end{lemma}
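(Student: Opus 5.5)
We verify the four assertions one at a time, using the standard matrix identities for the slash operator $\mid_k$ (integral weight, as in the statement). In each case we also check that holomorphy, respectively vanishing, at every cusp is preserved. Cusp behaviour always reduces to the fact that $f\mid_k\alpha$ has a holomorphic (resp. vanishing) limit at $i\infty$ for every $\alpha\in\mathrm{GL}_2^+(\mathbb{Q})$. To see this, write $\alpha=\gamma\beta$ with $\gamma\in\mathrm{SL}_2(\mathbb{Z})$ and $\beta$ upper triangular; the factor $\beta$ only rescales $z$, multiplies by a root of unity, and multiplies by a constant. All operators below are finite linear combinations of such $f\mid_k\alpha$, so the cusp conditions follow, and we concentrate on the transformation law.

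For (1), note $f(dz)=d^{-k/2}(f\mid_k\delta_d)(z)$ with $\delta_d=\begin{pmatrix} d&0\\0&1\end{pmatrix}$. Take $\gamma=\begin{pmatrix} a&b\\ c&d'\end{pmatrix}\in\varGamma_0(Nd)$. Then
$\delta_d\gamma\delta_d^{-1}=\begin{pmatrix} a&bd\\ c/d&d'\end{pmatrix}\in\varGamma_0(N)$, and this matrix has the same lower-right entry $d'$. Hence $f(dz)$ transforms under $\varGamma_0(Nd)$ with the same character $\chi$.

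For (2), factor $U(d)$ into operators $U(p)$ with $p\mid N$; since $\operatorname{rad}(d)\mid N$, it suffices to treat $U(p)$. Use
$f\mid U(p)=p^{k/2-1}\sum_{j=0}^{p-1}f\mid_k\begin{pmatrix}1&j\\0&p\end{pmatrix}$. For $\gamma\in\varGamma_0(N)$ one checks
$\begin{pmatrix}1&j\\0&p\end{pmatrix}\gamma=\gamma_j\begin{pmatrix}1&j'\\0&p\end{pmatrix}$. Here $\gamma_j\in\varGamma_0(N)$ has lower-right entry congruent to $d$ modulo $N$, and $j\mapsto j'$ is a permutation of $\mathbb{Z}/p$. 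This uses $p\mid N$, so that $p\mid c$. Summing over $j$ gives $f\mid U(p)\mid_k\gamma=\chi(d)f\mid U(p)$.

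For (3), take $\psi$ primitive of conductor $m$ and use the Gauss sum expansion
$f_\psi=\frac{g(\psi)}{m}\sum_{u\bmod m}\overline{\psi}(u)\,f\mid_k\begin{pmatrix}1&u/m\\0&1\end{pmatrix}$. For $\gamma\in\varGamma_0(Nm^2)$ we have
$\begin{pmatrix}1&u/m\\0&1\end{pmatrix}\gamma=\gamma'\begin{pmatrix}1&u'/m\\0&1\end{pmatrix}$ with $u'\equiv ud^2\pmod m$. The matrix $\gamma'$ lies in $\varGamma_0(N)$ and has lower-right entry $\equiv d\pmod N$; the condition $m^2\mid c$ is exactly what makes $\gamma'$ integral. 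Reindexing the sum by $u'$ produces the factor $\psi(d)^2$, so the character becomes $\chi\psi^2$. When $\psi$ is quadratic, $\psi^2$ is trivial on integers prime to the level, so the character is just $\chi$.

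For (4), first regard $f$ as a form on $\varGamma_0(N_1)$. Then observe that $n\equiv AC\pmod{AB}$ holds exactly when $A\mid n$ and $n/A\equiv C\pmod B$. Hence
$f\mid S_{AB,AC}=\bigl((f\mid U(A))\mid S_{B,C}\bigr)\mid V(A)$. Part (2) applies to $U(A)$ at level $N_1$ because $\operatorname{rad}(A)\mid N_1$. Since $B\mid 24$, the group $(\mathbb{Z}/B\mathbb{Z})^\times$ has exponent dividing $2$. Therefore every character $\psi$ modulo $B$ is quadratic, and orthogonality gives
$g\mid S_{B,C}=\frac{1}{\varphi(B)}\sum_{\psi\bmod B}\psi(C)\,g\otimes\psi$ (here $\psi(C)=\overline{\psi}(C)$). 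For imprimitive $\psi$ we write $g\otimes\psi$ as the twist by the primitive character inducing it, of conductor $m\mid B$, followed by removal of the terms with $p\mid n$ for each prime $p\mid B$, $p\nmid m$. That removal is $h\mapsto h-h\mid U(p)\mid V(p)$, handled by parts (1)–(3).

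The main obstacle is the level bookkeeping in this last step: we must check that every such term lands in level dividing $N_1B^2$, and that the character stays $\chi$ by the quadratic case of (3). Applying $V(A)$ at the end by (1) then gives level $N_1AB^2$. Cusp forms are preserved at every stage, which gives the statement in parentheses.
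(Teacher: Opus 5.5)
Your proof is correct, and for (4) it follows the paper's route: $f\mid S_{AB,AC}=f\mid U(A)\mid S_{B,C}\mid V(A)$, then orthogonality over the characters modulo $B\mid 24$ (all quadratic), then $V(A)$. For (1)--(3) the paper only says ``well known'', whereas you give the standard matrix-identity proofs, and these check out. One small slip: your Gauss-sum formula as written returns $\psi(-1)f_\psi$, so you should use $-u/m$; a constant does not affect modularity.

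The real difference is how you treat imprimitive $\psi$ modulo $B$. The paper applies (3) to every such $\psi$ and concludes level $N_1B^2$. That is right only if $m$ in (3) is read as the modulus of $\psi$, not its conductor. Read literally, the principal character modulo $B$ would stay at level $N_1$, which is false in general: $E_4$ twisted by the principal character modulo $2$ is not of level $1$.

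Your approach (twist by the primitive $\psi^*$ of conductor $m$ inducing $\psi$, then strip primes) avoids this. The bookkeeping you flag as the main obstacle does close:
\begin{itemize}
\item $g\otimes\psi^*$ has level $N_1m^2$.
\item For a prime $p\mid B$ with $p\nmid m$, the map $h\mapsto h-h\mid U(p)\mid V(p)$ takes level $M$ to a divisor of $Mp^2$. If $p\nmid M$, you must first regard $h$ as a form on $\varGamma_0(Mp)$, because (2) requires $p$ to divide the level.
\item The final level therefore divides $N_1m^2\prod p^2$. This divides $N_1B^2$, since $m$ and these primes are pairwise coprime divisors of $B$.
\item The character is unchanged: $\chi(\psi^*)^2$ agrees with $\chi$ on integers prime to the level.
\end{itemize}

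There is also a shorter fix inside your own framework. Your proof of (3) works for an arbitrary character $\psi$ modulo $m$ if you replace the Gauss-sum expansion by the finite Fourier expansion $\psi(n)=\sum_{u\bmod m}c_u e^{2\pi i un/m}$. All the argument uses is $\sum_u c_u e^{2\pi i ud^2n/m}=\psi(d)^2\psi(n)$. With $m=B$ this gives level $\operatorname{lcm}(N_1,B^2)\mid N_1B^2$ directly, with no prime stripping.
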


\begin{lemma}\label{lem:6}
	Let $k \in \frac{1}{2}+\mathbb{Z}$, $N \in \mathbb{N}^*$, $4\mid N$,
	$\chi$ be a Dirichlet character modulo $N$,
	and $f(z) \in \mathcal{M}_k(\varGamma_0(N), \chi)$
	(resp. $\mathcal{S}_k(\varGamma_0(N), \chi)$).
	\begin{enumerate}
	\item If $d\in \mathbb{N}^*$, then
	$f(z)\mid  V(d) \in \mathcal{M}_k(\varGamma_0(Nd), \chi\chi_d)$
	(resp. $\mathcal{S}_k(\varGamma_0(Nd), \chi\chi_d)$).

	\item If $d\in \mathbb{N}^*$, $\operatorname{rad}(d) \mid  N$, 
	and $\operatorname{dis}(d) \mid  N$,
	then $f(z) \mid  U(d) \in \mathcal{M}_k(\varGamma_0(N), \chi\chi_d)$
	(resp. \break $\mathcal{S}_k(\varGamma_0(N), \chi\chi_d)$).

	\item If $\psi$ is a Dirichlet character with conductor $m$, then
	$f_{\psi}(z) \in \mathcal{M}_k(\varGamma_0(Nm^2), \chi\psi^2)$
	(resp. $\mathcal{S}_k(\varGamma_0(Nm^2), \chi\psi^2)$).
	In particular, if $\psi$ is quadratic,
	then $f_{\psi}(z) \in \mathcal{M}_k(\varGamma_0(Nm^2), \chi)$.

	\item If $A,B \in \mathbb{N}^*$, $C \in \mathbb{Z}$, $\gcd(B,C)=1$, $B \mid 24$,
	and let $N_1=\operatorname{lcm}(N,\operatorname{rad}(A),\operatorname{dis}(A))$,
	then $f(z)\mid S_{AB,AC} \in 
	\mathcal{M}_k(\varGamma_0(N_1AB^2), \chi)$
	(resp. $\mathcal{S}_k(\varGamma_0(N_1AB^2), \chi)$).
	\end{enumerate}
\end{lemma}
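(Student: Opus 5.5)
The lemma collects four operator statements for half-integral weight $k$. The plan is to prove (1), (2), (3) directly from the transformation law with the theta multiplier $\left(\frac{c}{d}\right)^{-2k}\varepsilon_d^{2k}$, and then to obtain (4) formally from (1)--(3).

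For (1), take $\gamma=\begin{pmatrix} a&b\\ c&d'\end{pmatrix}\in\varGamma_0(Nd)$ and set $\gamma'=\begin{pmatrix} a&bd\\ c/d&d'\end{pmatrix}$. Then $\gamma'\in\varGamma_0(N)$ and $d\cdot\gamma z=\gamma'(dz)$. Since $(c/d)(dz)+d'=cz+d'$, the automorphy factors agree except for the Kronecker symbols. We have $\left(\frac{c/d}{d'}\right)=\left(\frac{c}{d'}\right)\left(\frac{d}{d'}\right)$, and $\left(\frac{d}{d'}\right)$ raised to the odd power $2k$ is exactly $\chi_d(d')$. The main care here is the sign conventions for negative $d'$ and the fact that $4\mid N$. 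The cusp condition is preserved because $V(d)$ only rescales $z$.

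For (3), write
\[
f_\psi=\frac{1}{g(\bar\psi)}\sum_{u\bmod m}\bar\psi(u)\,f\bigl(z+\tfrac{u}{m}\bigr),
\]
where $g(\bar\psi)$ is the Gauss sum. Given $\gamma\in\varGamma_0(Nm^2)$, I would use the standard conjugation identity
\[
\begin{pmatrix}1&u/m\\0&1\end{pmatrix}\gamma=\gamma_u\begin{pmatrix}1&u d'^2/m\\0&1\end{pmatrix},\qquad \gamma_u\in\varGamma_0(N).
\]
Because $\gamma_u$ has the same lower row modulo $4$ and the same $d'$, the multiplier is unchanged. Reindexing $u\mapsto ud'^2$ then produces the factor $\psi(d')^2$. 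The quadratic case follows immediately.

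For (2), multiplicativity $U(d_1d_2)=U(d_1)U(d_2)$ reduces the claim to $d=p^j$, and hence to $U(p)$ with $p\mid N$. Here I would write
\[
f\mid U(p)=p^{k/2-1}\sum_{j}f\mid_k\begin{pmatrix}1&j\\0&p\end{pmatrix}.
\]
For $\gamma\in\varGamma_0(N)$, each $\begin{pmatrix}1&j\\0&p\end{pmatrix}\gamma$ is rewritten as an element of $\varGamma_0(N)$ times some $\begin{pmatrix}1&j'\\0&p\end{pmatrix}$, and $j\mapsto j'$ permutes the residues. Tracking the half-integral multiplier through this decomposition should produce the extra factor $\left(\frac{p}{d'}\right)$, i.e.\ $\chi_p$. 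The hypothesis $\operatorname{dis}(d)\mid N$ is exactly what guarantees that $\chi_d$ is a character modulo $N$. I expect this multiplier bookkeeping, especially for $p=2$ where $\varepsilon_d$ and the $4\mid N$ condition interact, to be the main obstacle. If the direct computation becomes unwieldy, I would instead follow Shimura's argument via $f\mapsto f\cdot\theta$ products, or simply cite Koblitz/Ono for $U(p)$.

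For (4), note that $n\equiv AC\pmod{AB}$ means $n=Am$ with $m\equiv C\pmod B$. Hence
\[
f\mid S_{AB,AC}=\bigl((f\mid U(A))\mid S_{B,C}\bigr)\mid V(A).
\]
By (2), applied at level $N_1$ (which contains $\operatorname{rad}(A)$ and $\operatorname{dis}(A)$), $f\mid U(A)$ lies in level $N_1$ with character $\chi\chi_A$. Since $B\mid 24$, the group $(\mathbb{Z}/B)^*$ has exponent $2$, so every character modulo $B$ is quadratic. Using $\gcd(B,C)=1$, the indicator of $m\equiv C\pmod B$ equals
\[
\frac{1}{\varphi(B)}\sum_{\psi\bmod B}\psi(C)\psi(m).
\]
Thus $S_{B,C}$ is a linear combination of quadratic twists, each of conductor dividing $B$. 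By (3) these keep the character and land in level $N_1B^2$. Finally (1) sends the result to level $N_1AB^2$ and multiplies the character by $\chi_A$, cancelling the earlier $\chi_A$ because $\chi_A^2$ is trivial. Cusp forms are preserved at every step, which gives the parenthetical statements.
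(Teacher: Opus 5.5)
Your part (4) is the paper's argument: $S_{AB,AC}=U(A)\,S_{B,C}\,V(A)$, orthogonality over the quadratic characters modulo $B\mid 24$, then (2), (3), (1). You rightly make explicit the cancellation $\chi_A^2=\mathbf{1}$ that the paper hides behind ``identical''; the paper itself just cites (1)--(3) as well known. The genuine gap is in your sketch of (2): reducing $U(d)$ to single primes fails at $p=2$. Your prime step $f\mid U(2)\in\mathcal{M}_k(\varGamma_0(N),\chi\chi_2)$ needs $\operatorname{dis}(2)=8\mid N$. The hypothesis $\operatorname{dis}(d)\mid N$ does not supply this when $v_2(d)$ is even and positive and $v_2(N)=2$. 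Take $N=d=4$ and $\theta=\sum_{n\in\mathbb{Z}}q^{n^2}$. Then $\operatorname{dis}(4)=1$, yet $\theta\mid U(2)=\theta(2z)$ lies in $\mathcal{M}_{1/2}(\varGamma_0(8),\chi_8)$ and not in level $4$. So factoring $U(4)=U(2)U(2)$ only yields $f\mid U(4)\in\mathcal{M}_k(\varGamma_0(2N),\chi)$. This is precisely the case the paper uses later: $S_{16,4}$ and $S_{16,0}$ applied to $\theta^3$, where $N_1=4$.

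The cure is not to factor: run your coset argument directly with $\begin{pmatrix}1&j\\0&d\end{pmatrix}$, $j\bmod d$. For $\gamma=\begin{pmatrix}a&b\\c&\delta\end{pmatrix}\in\varGamma_0(N)$, the condition $\operatorname{rad}(d)\mid c$ makes $j\mapsto j'\equiv(\delta j+b)(cj+a)^{-1}\pmod d$ a permutation. The new matrix has lower row $(dc,\ \delta-cj')$, and the multiplier picks up $\left(\frac{d}{\delta-cj'}\right)=\chi_d(\delta)$, because $\chi_d$ is a character modulo $\operatorname{dis}(d)\mid N$. For $d=4$ this factor is simply $1$. This is where the hypothesis on $\operatorname{dis}(d)$, rather than on each $\operatorname{dis}(p)$, actually enters.

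Two smaller steps also need repair.

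\emph{The lower-right entry in (3), and likewise in (2).} The conjugated matrix does not keep the entry $d'$: $\gamma_u$ has lower row $(c,\ d'')$ with $d''=d'-(c/m)ud'^2$. The multiplier is still unchanged, but this must be checked. Since $Nm\mid c/m$, we get $d''\equiv d'$ modulo $N$, modulo every prime dividing $c$, and modulo $8$ when $v_2(c)$ is odd (then $v_2(c)\ge 3$). That is exactly what the character $\delta\mapsto\left(\frac{c}{\delta}\right)=\chi_c(\delta)$ depends on. Shimura's sign convention makes it an honest character, so a sign change of $d''$ is harmless.

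\emph{Imprimitive characters in (4).} Your Gauss-sum formula proves (3) only for primitive $\psi$. The orthogonality sum, however, runs over all characters modulo $B$, including imprimitive ones, and twisting by an imprimitive $\psi$ is not twisting by its primitive companion. For example, the principal character modulo $2$ gives $\theta\mid S_{2,1}=\theta(z)-\theta(4z)$, which has level $16$, not $4$. So ``conductor dividing $B$'' does not let you invoke (3) as you proved it; the paper glosses over the same point. There are two fixes:
\begin{enumerate}
\item Prove (3) for every character modulo $m$. Replace $\bar\psi(u)/g(\bar\psi)$ by $\frac1m\hat\psi(u)$, where $\hat\psi(u)=\sum_{a\bmod m}\psi(a)e^{-2\pi i au/m}$; this satisfies $\hat\psi(u\delta^2)=\bar\psi(\delta)^2\hat\psi(u)$.
\item Treat $S_{B,C}$ directly from $f\mid S_{B,C}=\frac1B\sum_{u\bmod B}e^{-2\pi iCu/B}f(z+\frac uB)$. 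Your conjugation then replaces $C$ by $C\delta^{-2}\equiv C\pmod B$, because $(\mathbb{Z}/B\mathbb{Z})^*$ has exponent $2$ for $B\mid 24$.
\end{enumerate}
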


Parts (1)--(3) of Lemma~\ref{lem:5} and Lemma~\ref{lem:6} are well known. 
Moreover, part~(4) of Lemma~\ref{lem:5} and part~(4) of Lemma~\ref{lem:6} 
follow from parts~(1)--(3) of the respective lemmas.
We now prove Lemma~\ref{lem:5} (4). 
The proof of Lemma~\ref{lem:6} (4) is identical.

\begin{proof}[Proof of Lemma~\ref{lem:5} (4)]
	We have the identity
	$$
	f(z) \mid S_{AB,AC} = f(z) \mid U(A) \mid S_{B,C} \mid V(A).
	$$
	Then by Lemma~\ref{lem:5}(2), it follows that
	$$
	f(z) \mid U(d) \in \mathcal{M}_k(\varGamma_0(N_1), \chi).
	$$
	Since $B$ and $C$ are coprime, 
	and by the orthogonality relations of Dirichlet characters,
	we have
	$$
	f(z) \mid U(A) \mid S_{B,C} = \frac{1}{\varphi(B)} \sum_{\psi \in \widehat{(\mathbb{Z}/B\mathbb{Z})^{*}}} \overline{\psi}(C) \cdot \bigl( f(z) \mid U(d) \bigr) \otimes \psi,
	$$
	where $\varphi(B)$ is Euler's totient function and $\widehat{(\mathbb{Z}/B\mathbb{Z})^{*}}$ denotes the group of characters modulo $B$.
	When $ B \mid 24 $, 
	every character in $ \widehat{(\mathbb{Z}/B\mathbb{Z})^{*}} $ is quadratic. Hence, 
	by Lemma~\ref{lem:5}(3), 
	we have
	$$
	f(z) \mid U(A) \mid S_{B,C} \in \mathcal{M}_k(\varGamma_0(N_1 B^2), \chi).
	$$
	Finally, by Lemma~\ref{lem:5}(1), 
	$$
	f(z) \mid S_{AB,AC} = f(z) \mid U(A) \mid S_{B,C} \mid V(A) \in \mathcal{M}_k(\varGamma_0(N_1 A B^2), \chi).
	$$
\end{proof}

\subsection{Sturm's theorem}
In order to decide whether two modular forms are equal, 
we require the following theorem of Sturm \cite{Sturm1987}.
If $ f(z) = \sum_{n=0}^{\infty} a(n) q^n $, 
define the order of $ f $ at $ \infty $ to be
$\operatorname{ord}_{\infty}(f) = \min\{ n \mid a(n) \neq 0 \}$
except when $ f = 0 $, 
in which case $ \operatorname{ord}_{\infty}(f) = \infty $.

\begin{lemma}\label{lem:7}
	Let $k \in \frac{1}{2}\mathbb{Z}$, $N \in \mathbb{N}^*$
	(if $k\in \frac{1}{2}+\mathbb{Z}$, than $4 \mid N$),
	$\chi$ be a Dirichlet character modulo $N$,
	and $f \in \mathcal{M}_k(\varGamma_0(N), \chi)$.
	If $\operatorname{ord}_{\infty}(f)> \frac{kN}{12}\prod_{p\mid N}(1+p^{-1})$,
	then $f = 0$.
\end{lemma}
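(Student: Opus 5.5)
Since Lemma~\ref{lem:7} is quoted from \cite{Sturm1987}, we only sketch the standard argument. The plan is to reduce to the valence formula for level one forms: first pass to a power of $f$ of integral weight with trivial character, then take a norm down to $\mathrm{SL}_2(\mathbb{Z})$. Throughout, write $m=[\mathrm{SL}_2(\mathbb{Z}):\varGamma_0(N)]=N\prod_{p\mid N}(1+p^{-1})$, so the hypothesis reads $\operatorname{ord}_\infty(f)>km/12$.

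\emph{Step 1: pass to a power.} Let $r$ be the order of $\chi$ and put $g=f^{4r}$. The automorphy factor $\left(\frac{c}{d}\right)^{-2k}\varepsilon_d^{2k}$ in the half-integral slash becomes trivial after raising to the fourth power, since $\varepsilon_d^4=1$ and $\left(\frac{c}{d}\right)^{4}=1$. Likewise $\chi^{4r}=\mathbf 1$. Hence $g\in\mathcal{M}_{4rk}(\varGamma_0(N))$ has integral weight and trivial character, and it is holomorphic at all cusps. Moreover $\operatorname{ord}_\infty(g)=4r\operatorname{ord}_\infty(f)>4rk\,m/12$. It suffices to show $g=0$, since $\mathcal{H}$ is connected and so $f^{4r}=0$ forces $f=0$.

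\emph{Step 2: norm to level one.} Suppose $g\neq0$. Choose coset representatives $\gamma_1=I,\gamma_2,\dots,\gamma_m$ for $\varGamma_0(N)\backslash\mathrm{SL}_2(\mathbb{Z})$ and set $G=\prod_{j}g|_{4rk}\gamma_j$. Since right multiplication by any element of $\mathrm{SL}_2(\mathbb{Z})$ permutes the cosets and $g$ is $\varGamma_0(N)$-invariant, $G$ is a modular form of weight $K=4rkm$ for $\mathrm{SL}_2(\mathbb{Z})$. It is nonzero because each factor is a nonzero holomorphic function. Each $g|\gamma_j$ is holomorphic at $\infty$, with an expansion in nonnegative powers of $q^{1/N}$. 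The factor $j=1$ contributes exactly $\operatorname{ord}_\infty(g)$. Therefore $\operatorname{ord}_\infty(G)\geq\operatorname{ord}_\infty(g)>K/12$.

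\emph{Step 3: contradiction.} The valence formula for $\mathrm{SL}_2(\mathbb{Z})$ says a nonzero form of weight $K$ has $\operatorname{ord}_\infty\le K/12$. This contradicts the bound from Step 2, so $G=0$, hence $g=0$ and $f=0$.

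The only delicate point is Step 1: the slash operator for half-integral weight is defined only on $\varGamma_0(4)$, so the norm construction cannot be applied to $f$ directly. Raising to the power $4r$ removes both the theta multiplier and the character at once, and the bound scales linearly with the exponent. The rest is routine bookkeeping of orders at $\infty$, including the fractional exponents in $q^{1/N}$ for the non-identity cosets, which are nonnegative and so only help.
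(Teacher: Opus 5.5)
Your argument is correct and complete. The paper gives no proof of its own and simply quotes \cite{Sturm1987}; your norm to level one combined with the valence formula for $\mathrm{SL}_2(\mathbb{Z})$ is the standard argument behind that bound, and your Step 1 (passing to $f^{4r}$ to remove the theta multiplier and the character) supplies the reduction from half-integral weight and nontrivial Nebentypus that the citation leaves implicit.
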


\subsection{Modular transformation, and holomorphy criteria for eta quotients}
The next two lemmas are used to determine whether an eta quotient is a modular form,
see \cite[Theorem 1.64]{Ono2004} and \cite[Corollary 2.3]{kohler2011eta}.

\begin{lemma}\label{lem:8}
	If $f(z)=\prod_{\delta \mid N} \eta(\delta z)^{r_{\delta}}$
	is an eta quotient with 
	$k = \frac{1}{2} \sum_{\delta \mid N} r_{\delta} \in \mathbb{Z} $,
	with the additional properties that
	$$
	\sum_{\delta \mid N} \delta r_{\delta} \equiv 0 \pmod{24}
	$$
	and
	$$
	\sum_{\delta \mid N} \frac{N}{\delta} r_{\delta} \equiv 0 \pmod{24},
	$$
	then $ f(z) $ satisfies
	$$
	f\left( \frac{az + b}{cz + d} \right) = \left(\frac{(-1)^k s}{d}\right)(cz + d)^k f(z)
	$$
	for every $ \begin{pmatrix} a & b \\ c & d \end{pmatrix} \in \varGamma_0(N) $,
	here $s=\prod_{\delta \mid N}\delta^{|r_{\delta}|} $.
\end{lemma}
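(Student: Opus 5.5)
This is the standard eta-quotient transformation law (Gordon–Hughes–Newman), and I would prove it from the transformation formula of $\eta$ itself. First I would fix the case $c=0$: then $d=\pm1$, and since $\sum\delta r_\delta\equiv0\pmod{24}$, $f$ is invariant under $z\mapsto z+1$. For $d=-1$ both sides are handled by the sign conventions. So assume $c>0$; the case $c<0$ reduces to this by replacing $\gamma$ with $-\gamma$, which acts trivially on $z$, multiplies $(cz+d)^k$ by $(-1)^k$, and changes $\left(\frac{(-1)^ks}{d}\right)$ to $\left(\frac{(-1)^ks}{-d}\right)$, and these two factors agree.

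For $\gamma=\begin{pmatrix}a&b\\c&d\end{pmatrix}\in\varGamma_0(N)$ and each $\delta\mid N$, I would write $\delta\gamma z=\gamma_\delta(\delta z)$ with $\gamma_\delta=\begin{pmatrix}a&b\delta\\c/\delta&d\end{pmatrix}\in\mathrm{SL}_2(\mathbb{Z})$; this is legitimate because $\delta\mid N\mid c$. Then I would apply the classical formula $\eta(\gamma' w)=\epsilon(\gamma')\,(c'w+d')^{1/2}\eta(w)$, where $\epsilon$ is a $24$th root of unity given explicitly by Petersson's/Rademacher's formula. For $c'>0$ and $d'$ odd I would use the form
\[
\epsilon=\left(\frac{d'}{c'}\right)^{*}e^{\pi i(\ldots)/12},
\]
and otherwise the version with the roles of $c'$ and $d'$ swapped. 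Multiplying over $\delta$, the factors $(\tfrac{c}{\delta}\delta z+d)^{r_\delta/2}=(cz+d)^{r_\delta/2}$ combine to $(cz+d)^k$. The whole content of the proof is then the identity $\prod_\delta\epsilon(\gamma_\delta)^{r_\delta}=\left(\frac{(-1)^ks}{d}\right)$.

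To prove that identity I would split into $d$ odd and $d$ even; in the even case $c$ is odd, so $N$ is odd. In each case I would collect the exponential parts. In the $d$-odd form, the exponent of $e^{\pi i/12}$ is a linear combination of $\sum\delta r_\delta$ times terms like $bd$, and of $\sum (N/\delta) r_\delta\cdot\frac{c}{N}(\ldots)$, plus terms like $(d-1)\sum r_\delta$. The two hypotheses mod $24$ kill the first two groups. The remaining group contributes a power of $i$ that produces the $(-1)^k$ inside the symbol via $\left(\frac{-1}{d}\right)$. The Jacobi symbol parts $\left(\frac{c/\delta}{d}\right)^{r_\delta}$ multiply to
\[
\left(\frac{c}{d}\right)^{2k}\prod_\delta\left(\frac{\delta}{d}\right)^{-r_\delta}=\left(\frac{\prod\delta^{|r_\delta|}}{d}\right),
\]
since $2k=\sum r_\delta$ is even and $\left(\frac{\cdot}{d}\right)=\pm1$. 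This is where integrality of $k$ is used.

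The main obstacle is the bookkeeping of Rademacher's $\epsilon$. It involves getting the correct branch of $\sqrt{cz+d}$ against the convention $\arg\in(-\pi/2,\pi/2]$, handling the extended symbols $\left(\frac{d}{c}\right)^*$ versus $\left(\frac{c}{d}\right)_*$, and carrying out the even-$d$ case, where quadratic reciprocity must be used to convert $\left(\frac{d}{c/\delta}\right)$ into $\left(\frac{s}{d}\right)$. Rather than rederive all of this, I would cite the formula in the form given in \cite[Theorem 1.64]{Ono2004} and verify the exponent cancellations there. Alternatively, I would simply note that the lemma is exactly \cite[Theorem 1.64]{Ono2004} and \cite[Corollary 2.3]{kohler2011eta}, and present the above computation as a sketch.
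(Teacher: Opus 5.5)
Your proposal is correct. The paper gives no argument for this lemma: it only cites \cite[Theorem 1.64]{Ono2004}, the Gordon--Hughes--Newman theorem. You instead reconstruct the standard derivation from the multiplier system of $\eta$, falling back on the same citation only for the bookkeeping. That bookkeeping does close.

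\emph{The case $d$ odd.} With Knopp's normalization $\eta(\gamma' w)=\epsilon\sqrt{-i(c'w+d')}\,\eta(w)$ for $c'>0$, the $d'$-odd multiplier is $\left(\frac{c'}{d'}\right)\exp\bigl(\frac{\pi i}{12}[a'c'(1-d'^2)+d'(b'-c'+3)]\bigr)$. So the symbol in your displayed formula should be $\left(\frac{c'}{d'}\right)$ rather than $\left(\frac{d'}{c'}\right)^{*}$; the latter belongs to the $c'$-odd form, and you do use the correct one afterwards. For $\gamma_\delta$, the terms carrying $\sum_\delta \delta r_\delta$ and $\sum_\delta \frac{c}{\delta}r_\delta=\frac{c}{N}\sum_\delta\frac{N}{\delta}r_\delta$ vanish mod $24$, leaving $i^{dk}$. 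Together with the $(-i)^k$ coming from $\sqrt{-i(cz+d)}^{\,2k}$, this gives $i^{k(d-1)}=\left(\frac{-1}{d}\right)^k$, which also holds for negative odd $d$ with the Kronecker symbol.

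\emph{The case $d$ even.} Here $c$ and $N$ are odd, and one uses $\left(\frac{d}{c_\delta}\right)=\left(\frac{(-1)^{(c_\delta-1)/2}c_\delta}{d}\right)$ with $c_\delta=c/\delta$. The exponential part of the $c'$-odd multiplier again vanishes by the two congruences. After splitting off $\prod_\delta\left(\frac{c_\delta}{d}\right)^{r_\delta}=\left(\frac{s}{d}\right)$, what remains is $i^{-e}\left(\frac{-1}{d}\right)^{e}$, where $e=\sum_\delta r_\delta\frac{c_\delta-1}{2}=\frac12\sum_\delta r_\delta c_\delta-k\equiv -k \pmod{12}$. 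This equals $i^{k}\left(\frac{-1}{d}\right)^k$, which cancels the $(-i)^k$ and again leaves $\left(\frac{(-1)^ks}{d}\right)$.

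\emph{Comparison.} Compared with the bare citation, your route shows which congruence kills which term. It also shows exactly where $k\in\mathbb{Z}$ is used: in $\left(\frac{c}{d}\right)^{2k}=1$ and in $i^{k(d-1)}=\left(\frac{-1}{d}\right)^k$. In particular, it makes plain that the lemma as stated does not by itself justify the weight-$\frac12$ memberships for which the paper later invokes it, such as $f_1\in\mathcal{M}_{1/2}(\varGamma_0(4))$.
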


\begin{lemma}\label{lem:9}
	An eta quotient
	$f(z)=\prod_{\delta \mid N} \eta(\delta z)^{r_{\delta}}$ is holomorphic
	if and only if the inequalities
	$$\sum_{\delta \mid N} 
	\frac{(\operatorname{gcd}(d, \delta))^{2}}{\delta} {r_{\delta}}\geqslant 0$$
	hold for all positive divisors $d$ of $N$. 
	It is a cuspidal etaquotient if and only
	if all these inequalities hold strictly.
\end{lemma}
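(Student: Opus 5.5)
The plan is to reduce holomorphy of $f$ to a statement about its leading exponent at each cusp, and to compute that exponent via Ligozat's order formula. First, $\eta$ has no zeros or poles on $\mathcal{H}$, since $\eta(z)=e^{2\pi i z/24}\prod(1-q^n)$ and the product converges to a nonzero value for $|q|<1$. Hence any eta quotient is holomorphic and nonvanishing on $\mathcal{H}$. So ``holomorphic'' (resp.\ ``cuspidal'') reduces to: for every $\gamma\in \mathrm{SL}_2(\mathbb{Z})$, the expansion of $(cz+d)^{-k}f(\gamma z)$ in fractional powers of $q$ has only nonnegative (resp.\ positive) exponents. I will treat this condition cusp by cusp. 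Modularity of $f$ is not needed for this; we only track leading exponents.

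Fix a cusp $a/c\in\mathbb{Q}$ with $\gcd(a,c)=1$ and choose $\gamma=\begin{pmatrix} a & b\\ c & e\end{pmatrix}\in\mathrm{SL}_2(\mathbb{Z})$. For each $\delta\mid N$, write
\[
\begin{pmatrix}\delta & 0\\ 0 & 1\end{pmatrix}\gamma=\gamma_\delta\begin{pmatrix} g_\delta & \beta_\delta\\ 0 & \delta/g_\delta\end{pmatrix},
\qquad g_\delta=\gcd(c,\delta),\quad \gamma_\delta\in\mathrm{SL}_2(\mathbb{Z}).
\]
Such a $\gamma_\delta$ exists because the first column $(\delta a, c)$ has gcd $g_\delta$ (recall $\gcd(a,c)=1$), so it can be completed after dividing by $g_\delta$. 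Then the transformation law $\eta(\gamma_\delta w)=\epsilon(\gamma_\delta)(c_\delta w+d_\delta)^{1/2}\eta(w)$, with $|\epsilon|=1$, gives
\[
\eta(\delta\gamma z)=(\text{root of unity})\cdot(\text{automorphy factor})\cdot \eta\!\left(\frac{g_\delta^2 z+g_\delta\beta_\delta}{\delta}\right).
\]
The last factor equals $e^{2\pi i g_\delta^2 z/(24\delta)}\cdot(\text{nonvanishing power series in } q^{g_\delta^2/\delta})$. The automorphy factors multiply to a constant times $(cz+e)^{k}$, which is cancelled by the slash normalisation. Taking the product over $\delta$ with exponents $r_\delta$, the leading exponent of $(cz+e)^{-k}f(\gamma z)$ is
\[
\frac{1}{24}\sum_{\delta\mid N}\frac{\gcd(c,\delta)^2}{\delta}\,r_\delta .
\]
This exponent depends on $c$ only through $d=\gcd(c,N)$, because $\gcd(c,\delta)=\gcd(d,\delta)$ for every $\delta\mid N$. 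As $c$ ranges over all integers (including $c=0$, the cusp $\infty$, which gives $d=N$), $d$ ranges over all positive divisors of $N$: take $c=d$ itself.

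Therefore $f$ is holomorphic at every cusp if and only if $\sum_{\delta\mid N}\gcd(d,\delta)^2r_\delta/\delta\ge 0$ for all $d\mid N$. Likewise $f$ vanishes at every cusp if and only if all these sums are strictly positive. Since $f$ is nonvanishing on $\mathcal{H}$, the leading term is a genuinely nonzero constant times this power of $q$, with no cancellation. So the criterion is exact in both directions, which proves the lemma.

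The main obstacle I expect is the bookkeeping in the matrix factorisation. One must check that $\gamma_\delta$ is integral of determinant one. One must also confirm that the roots of unity and square-root branches contribute only constants and not extra powers of $q$. Beyond that, the computation is Ligozat's classical order formula, and I would cite \cite{kohler2011eta} for those details.
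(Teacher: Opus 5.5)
Your argument is correct. It is the standard computation of the leading $q$-exponent of $(cz+e)^{-k}f(\gamma z)$ at each cusp, namely Ligozat's order formula up to the positive cusp-width factor, and that computation underlies \cite[Corollary 2.3]{kohler2011eta}, which the paper simply cites without giving a proof of its own.

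The one step worth writing out is the automorphy factor of $\gamma_\delta$ at $w=(g_\delta^2z+g_\delta\beta_\delta)/\delta$. It equals $g_\delta(cz+e)/\delta$, so the $\delta$-factors combine to a nonzero constant times $(cz+e)^k$.
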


\section{Proof of Theorem~\ref{tho:1}}
\begin{proof}[Proof of Theorem~\ref{tho:1}]
	(1) Let $f_1(z)=\frac{\eta(2z)^5}{\eta(z)^2\eta(4z)^2} , 
	f_2(z)= \frac{\eta(4z)^5}{\eta(2z)^2\eta(8z)^2}$.
	By Lemma~\ref{lem:8},\ref{lem:9},
	we have $f_1\in \mathcal{M}_{1/2}(\varGamma_0(4) ) $,
	$f_2\in \mathcal{M}_{1/2}(\varGamma_0(8),\chi_8 ) $.

	(2) $f_1$ and $f_2$ are linearly independent 
	because they belong to spaces of modular forms with distinct Dirichlet characters.
	Moreover, 
	since $\mathbb{C}$ is algebraically closed, 
	the set $\{ ( \frac{f_1}{f_2} )^k \mid k \in \mathbb{N} \}$ 
	is linearly independent.
	Furthermore, 
	for a given $k \in \mathbb{N}^*$, 
	the set $\{ f_1^{\alpha_1} f_2^{\alpha_2} \mid \alpha_1 + \alpha_2 = k,\; \alpha_1,\alpha_2 \in \mathbb{N} \}$ 
	is linearly independent.
	Finally, 
	because modular forms of distinct weights are linearly independent, 
	we conclude that $f_1$ and $f_2$ are algebraically independent.

	(3) By Lemmas~\ref{lem:1}--\ref{lem:4}, for every half-integer $k \geqslant 0$ we have
	$$
	\dim \mathcal{M}_k^{\mathrm{quad}}(\varGamma_0(8)) = 2k+1,
	\qquad
	\dim \mathcal{S}_k^{\mathrm{quad}}(\varGamma_0(8)) = 
	\begin{cases}
	2k-5 & \text{if } k \geqslant 3,\\
	0   & \text{if } 0 \leqslant k < 3.
	\end{cases}
	$$
	Comparing dimensions of $\mathcal{M}_k^{\mathrm{quad}}(\varGamma_0(8))$ 
	and $\mathbb{C}[f_1,f_2] \bigcap \mathcal{M}_k^{\mathrm{quad}}(\varGamma_0(8))$
	for each $k\in \frac{1}{2}\mathbb{Z}  $, 
	we have $\mathcal{M}_{\mathbb{Z}/2}^{\mathrm{quad}}(\varGamma_0(8))=\mathbb{C}[f_1,f_2]$.
	Thus $\mathcal{M}_{\mathbb{Z}/2}^{\mathrm{quad}}(\varGamma_0(8))$ is generated as an algebra 
	by $\frac{\eta(2z)^5}{\eta(z)^2\eta(4z)^2}$ and $\frac{\eta(4z)^5}{\eta(2z)^2\eta(8z)^2}$.
	
	(4) By Lemma~\ref{lem:8},\ref{lem:9},
	we have 
	$g(z)=\eta(z)^2\eta(2z)\eta(4z)\eta(8z)^2\in \mathcal{S}_{3}(\varGamma_0(8),\chi_{-8} ) $.
	Computing the first three Fourier coefficients of both sides of
	$g = \frac{1}{4}\bigl(-2f_1f_2^5 + 3f_1^3f_2^3 - f_1^5f_2\bigr)$
	and applying Lemma~\ref{lem:7} yields the identity.
	Comparing dimensions of $\mathcal{S}_k^{\mathrm{quad}}(\varGamma_0(8))$ 
	and \break 
	$(g\cdot\mathbb{C}[f_1,f_2]) \bigcap \mathcal{S}_k^{\mathrm{quad}}(\varGamma_0(8))$
	for each $k\in \frac{1}{2}\mathbb{Z}  $, 
	we have $\mathcal{S}_{\mathbb{Z}/2}^{\mathrm{quad}}(\varGamma_0(8))=g \cdot \mathbb{C}[f_1,f_2]$.
\end{proof}

\section{Proof of Theorem~\ref{tho:2}}
The proof of Theorem~\ref{tho:2} is similar to that of Theorem~\ref{tho:1}, 
but due to the presence of an additional ideal $I_{12}$, 
some extra techniques are required.

\begin{proof}[Proof of Theorem~\ref{tho:2}]
	(1) Let the four eta quotients
	$\frac{\eta(2z)^5}{\eta(z)^2\eta(4z)^2}$,
	$\frac{\eta(6z)^5}{\eta(3z)^2\eta(12z)^2}$,
	$\frac{\eta(4z)^2\eta(12z)^2}{\eta(2z)\eta(6z)}$,\break
	$\frac{\eta(2z)\eta(12z)^6}{\eta(4z)^2\eta(6z)^3}$
	be respectively denoted by $f_1$, $f_2$, $f_3$, $f_4$.
	By Lemma~\ref{lem:8},\ref{lem:9},
	we have \break
	$f_1\in \mathcal{M}_{1/2}(\varGamma_0(4) ) $,
	$f_2\in \mathcal{M}_{1/2}(\varGamma_0(12),\chi_{12} ) $,
	$f_3,f_4\in \mathcal{M}_{1}(\varGamma_0(12),\chi_{-3} ) $.
	Consequently, \break
	$\mathbb{C}[f_1,f_2,f_3,f_4] \subset \mathcal{M}_{\mathbb{Z}/2}^{\mathrm{quad}}(\varGamma_0(12))$.
	Define the map
	$$
	\phi_{12}: \mathbb{C}[x_1,x_2,x_3,x_4] \longrightarrow \mathbb{C}[f_1,f_2,f_3,f_4],\quad
	x_i \mapsto f_i\ (i=1,2,3,4).
	$$
	Using Lemma~\ref{lem:7} together with the computation of 
	sufficiently many Fourier coefficients, 
	we obtain $I_{12} \subset \ker \phi_{12}$.
	Consequently, we have the induced map
	$$
	\bar{\phi}_{12} : \mathbb{C}[x_1,x_2,x_3,x_4]/I_{12} \longrightarrow \mathbb{C}[f_1,f_2,f_3,f_4],\quad
	x_i+I_{12} \mapsto f_i\ (i=1,2,3,4).
	$$

	(2) We choose a weighted graded reverse lexicographic order 
	on $\mathbb{C}[x_1,x_2,x_3,x_4]$ 
	with \break
	$\deg_\text{w}(x^{\alpha}) = \alpha_1 + \alpha_2 + 2\alpha_3 + 2\alpha_4$ 
	and $x_1 > x_2 > x_3 > x_4$.
	Under this order, 
	$\{x_1^2 x_3 - x_2^2 x_3 - x_1^2 x_4 - 3x_2^2 x_4,
	x_3^2 - x_1x_2 x_4 + 2 x_3 x_4 - 3 x_4^2,
	x_1^3 - x_1 x_2^2 - 4 x_2 x_3 - 12 x_2 x_4,
	x_1^2 x_2 - x_2^3 - 4 x_1 x_3 + 4 x_1 x_4\}$
	is a Gr\"{o}bner basis for $I_{12}$.
	Furthermore, 
	we obtain 
	$\operatorname{LT}(I_{12}) = 
	\langle x_1^2 x_3, x_3^2, x_1^3, x_1^2 x_2\rangle$.
	Let $ S = \{ x^{\alpha} + I_{12} \mid x^{\alpha} 
	\notin \operatorname{LT}(I_{12}) \} $. 
	Then $ S $ is a basis of $ \mathbb{C}[x_1,x_2,x_3,x_4]/I_{12} $.

	We want to prove that $\bar{\phi}_{12}$ is an isomorphism. 
	It suffices to show that $\bar{\phi}_{12}(S)$ is 
	a basis of $\mathbb{C}[f_1,f_2,f_3,f_4]$.
	Given $k\in\frac{1}{2}\mathbb{Z},k>0$,
	let
	$$
		A_{k,0}=\begin{cases}
			\{f_1^{\alpha_1}f_2^{\alpha_2}f_3^{\alpha_3}f_4^{\alpha_4} 
			\mid (C),\; \alpha_2+\alpha_3+\alpha_4 \equiv 0 \!\!\!\!\pmod 2\}
			& \text{if}\; k\notin 1+2\mathbb{Z},\\

			\{f_1^{\alpha_1}f_2^{\alpha_2}f_3^{\alpha_3}f_4^{\alpha_4} 
			\mid (C),\; \alpha_2+\alpha_3+\alpha_4 \equiv 0 \!\!\!\!\pmod 2\}
			\cup \{f_1^2 f_4^{k-1}\}
			& \text{if}\; k\in 1+2\mathbb{Z},\\
		\end{cases}
	$$
	and 
	$$
		A_{k,1}=\begin{cases}
			\{f_1^{\alpha_1}f_2^{\alpha_2}f_3^{\alpha_3}f_4^{\alpha_4} 
			\mid (C),\; \alpha_2+\alpha_3+\alpha_4 \equiv 1 \!\!\!\!\pmod 2\}
			& \text{if}\; k\notin 2\mathbb{Z},\\

			\{f_1^{\alpha_1}f_2^{\alpha_2}f_3^{\alpha_3}f_4^{\alpha_4} 
			\mid (C),\; \alpha_2+\alpha_3+\alpha_4 \equiv 1 \!\!\!\!\pmod 2\}
			\cup \{f_1^2 f_4^{k-1}\}
			& \text{if}\; k\in 2\mathbb{Z},\\
		\end{cases}
	$$
	where (C) is the following condition
	$$
	\text{(C)} \iff \alpha_1+\alpha_2+2\alpha_3+2\alpha_4=2k,
			\alpha_1,\alpha_3\in \{0,1\},
			\alpha_2,\alpha_4 \in \mathbb{N}.
	$$
	By $\operatorname{LT}(I_{12}) = 
	\langle x_1^2 x_3, x_3^2, x_1^3, x_1^2 x_2\rangle$,
	we obtain
	$$
		\bar{\phi}_{12}(S)=\{1\}\bigsqcup
		\left(\bigsqcup_{k\in\frac{1}{2}\mathbb{Z},k>0,i\in\{0,1\}} A_{k,i}\right).
	$$
	Moreover, since
	$$
	A_{k,0} \subset 
	\begin{cases}
	\mathcal{M}_k(\varGamma_0(12)) & \text{if } k \notin 1+2\mathbb{Z},\\[4pt]
	\mathcal{M}_k(\varGamma_0(12),\chi_{-4}) & \text{if } k \in 1+2\mathbb{Z},
	\end{cases}
	\quad
	A_{k,1} \subset 
	\begin{cases}
	\mathcal{M}_k(\varGamma_0(12),\chi_{12}) & \text{if } k \notin 1+2\mathbb{Z},\\[4pt]
	\mathcal{M}_k(\varGamma_0(12),\chi_{-3}) & \text{if } k \in 1+2\mathbb{Z},
	\end{cases}
	$$
	and because modular forms with different weights 
	or different Dirichlet characters are linearly independent, 
	to prove that $\bar{\phi}_{12}(S)$ is a basis 
	it suffices to show that the $A_{k,i}$ is linearly independent.

	Computing the Fourier coefficients, we obtain
	$\operatorname{ord}_{\infty}(f_1) = \operatorname{ord}_{\infty}(f_2) = 0$,
	$\operatorname{ord}_{\infty}(f_3) = 1$,
	$\operatorname{ord}_{\infty}(f_4) = 2$.
	Consequently,
	$
	\operatorname{ord}_{\infty}
	(f_1^{\alpha_1} f_2^{\alpha_2} f_3^{\alpha_3} f_4^{\alpha_4}) 
	= \alpha_3 + 2\alpha_4.
	$
	For $ k \in \frac{1}{2}\mathbb{Z} $, $ k > 0 $, 
	and $ k \notin 1+2\mathbb{Z} $, 
	it is easy to verify that any two modular forms in $ A_{k,0} $ 
	have distinct orders at $ \infty $. 
	Therefore $ A_{k,0} $ is linearly independent.
	For $ k \in 1+2\mathbb{Z} $, $ k > 0 $, 
	among the elements of $ A_{k,0} $, 
	only $ f_1^2 f_4^{k-1} $ and $ f_2^2 f_4^{k-1} $ 
	have the same order at $ \infty $, 
	and this order is the largest in $ A_{k,0} $. 
	Moreover, 
	$ f_1^2 $ and $ f_2^2 $ are linearly independent. 
	Therefore $ A_{k,0} $ is linearly independent.
	The situation for $ A_{k,1} $ is almost the same as that for $ A_{k,0} $. 
	Hence we obtain that $ \bar{\phi}_{12} $ is an isomorphism.

	(3) By Lemmas~\ref{lem:1}--\ref{lem:4}, for every half-integer $k \geqslant 0$ we have
	$$
	\dim \mathcal{M}_k^{\mathrm{quad}}(\varGamma_0(12)) = 
	\begin{cases}
	4k+1 & \text{if } k \in \mathbb{Z},\\
	4k   & \text{if } k \in \frac{1}{2}+\mathbb{Z},
	\end{cases}
	$$
	$$
	\dim \mathcal{S}_k^{\mathrm{quad}}(\varGamma_0(12)) = 
	\begin{cases}
	0   & \text{if } 0 \leqslant k \leqslant 2,\\
	4k-9 & \text{if } k \in \mathbb{Z},k>2,\\
	4k-8 & \text{if } k \in \frac{1}{2}+\mathbb{Z},k>2.
	\end{cases}
	$$
	By counting the number of elements in $A_{k,0}$ and $A_{k,1}$, 
	or by using the fact that $\bar{\phi}_{12}$ is an isomorphism, 
	we obtain that for each $k \in \frac{1}{2}\mathbb{Z}$ 
	the dimensions of $\mathcal{M}_k^{\mathrm{quad}}(\varGamma_0(12))$ 
	and $\mathbb{C}[f_1,f_2,f_3,f_4] \cap 
	\mathcal{M}_k^{\mathrm{quad}}(\varGamma_0(12))$ are equal.
	Therefore, 
	$\mathcal{M}_{\mathbb{Z}/2}^{\mathrm{quad}}(\varGamma_0(12))=\mathbb{C}[f_1,f_2,f_3,f_4]$.

	(4) By Lemma~\ref{lem:8},\ref{lem:9},
	we have 
	$g_1(z)=\frac{\eta(2z)^3\eta(3z)^2\eta(12z)^2}{\eta(6z)^2}\in \mathcal{S}_{5/2}(\varGamma_0(12)) $,
	$g_2(z)= \break
	\frac{\eta(z)^2\eta(4z)^2\eta(6z)^3}{\eta(2z)^2}\in \mathcal{S}_{5/2}(\varGamma_0(12),\chi_{12}) $.
	By Lemma~\ref{lem:7},
	$g_1=\phi_{12}(x_2^3x_3-4x_1x_3x_4)$,
	$g_2= \break
	\phi_{12}(x_1x_2^2x_3-4x_1x_2^2x_4+8x_2x_3x_4-12x_2x_4^2)$.
	Therefore, 
	$\bar{\phi}_{12} (J_{12}/I_{12})\subset 
	\mathcal{S}_{\mathbb{Z}/2}^{\mathrm{quad}}(\varGamma_0(12))$.
	Comparing dimensions of $\mathcal{S}_k^{\mathrm{quad}}(\varGamma_0(12))$ 
	and 
	$\bar{\phi}_{12} (J_{12}/I_{12}) \bigcap \mathcal{S}_k^{\mathrm{quad}}(\varGamma_0(12))$
	for each $k\in \frac{1}{2}\mathbb{Z}  $, 
	we have 
	$\mathcal{S}_{\mathbb{Z}/2}^{\mathrm{quad}}(\varGamma_0(12)) \cong J_{12}/I_{12}$.

\end{proof}

\section{Proof of Theorem~\ref{tho:3}--\ref{tho:5}}

The proofs of Theorem~\ref{tho:3}--\ref{tho:5}
	are similar to that of Theorem~\ref{tho:1} and Theorem~\ref{tho:2}. 
	Therefore we only present the key steps.

\begin{proof}[Sketch of proof of Theorem~\ref{tho:3}]
	(1) Let the three eta quotients
	$\frac{\eta(2z)^5}{\eta(z)^2\eta(4z)^2}$,
	$\frac{\eta(4z)^5}{\eta(2z)^2\eta(8z)^2}$,
	$\frac{\eta(16z)^2}{\eta(8z)}$
	be respectively denoted by $f_1$, $f_2$, $f_3$.
	By Lemma~\ref{lem:8},\ref{lem:9},
	we have $f_1\in \mathcal{M}_{1/2}(\varGamma_0(4) ) $,
	$f_2\in \break
	\mathcal{M}_{1/2}(\varGamma_0(8),\chi_{8} ) $,
	$f_3\in \mathcal{M}_{1/2}(\varGamma_0(16)) $.
	Let $I_{16}=\langle x_1^2-4x_1x_3-x_2^2+8x_3^2\rangle$.
	Define the map
	$$
	\bar{\phi}_{16} : \mathbb{C}[x_1,x_2,x_3]/I_{16} \longrightarrow \mathbb{C}[f_1,f_2,f_3],\quad
	x_i+I_{16} \mapsto f_i\ (i=1,2,3).
	$$

	(2) We choose a graded reverse lexicographic order 
	on $\mathbb{C}[x_1,x_2,x_3]$ 
	with $\deg(x^{\alpha}) = \alpha_1 + \alpha_2 + \alpha_3$ 
	and $x_1 > x_2 > x_3$.
	Under this order, 
	$\{x_1^2-4x_1x_3-x_2^2+8x_3^2\}$
	is a Gr\"{o}bner basis for $I_{16}$.
	Furthermore, 
	we obtain 
	$\operatorname{LT}(I_{16}) = 
	\langle x_1^2\rangle$.
	Let $ S = \{ x^{\alpha} + I_{16} \mid x^{\alpha} 
	\notin \operatorname{LT}(I_{16}) \} $. 

	Given $k\in\frac{1}{2}\mathbb{Z},k>0$,
	let
	$$
		A_{k,0}=
			\{f_1^{\alpha_1}f_2^{\alpha_2}f_3^{\alpha_3}
			\mid \alpha_1+\alpha_2+\alpha_3=2k,
			\alpha_1\in \{0,1\},
			\alpha_2,\alpha_3 \in \mathbb{N},\; \alpha_1+\alpha_3 \equiv 0 \!\!\!\!\pmod 2\},
	$$
	$$
		A_{k,1}=
			\{f_1^{\alpha_1}f_2^{\alpha_2}f_3^{\alpha_3}
			\mid \alpha_1+\alpha_2+\alpha_3=2k,
			\alpha_1\in \{0,1\},
			\alpha_2,\alpha_3 \in \mathbb{N},\; \alpha_1+\alpha_3 \equiv 1 \!\!\!\!\pmod 2\}.
	$$
	By $\operatorname{LT}(I_{16}) = 
	\langle x_1^2\rangle$,
	we obtain
	$$
		\bar{\phi}_{16}(S)=\{1\}\bigsqcup
		\left(\bigsqcup_{k\in\frac{1}{2}\mathbb{Z},k>0,i\in\{0,1\}} A_{k,i}\right).
	$$
	Moreover, since
	$$
	A_{k,0} \subset 
	\begin{cases}
	\mathcal{M}_k(\varGamma_0(16),\chi_8) & \text{if } k \in \frac{1}{2}+\mathbb{Z},\\
	\mathcal{M}_k(\varGamma_0(16),\chi_{-4}) & \text{if } k \in 1+2\mathbb{Z},\\
	\mathcal{M}_k(\varGamma_0(16)) & \text{if } k \in 2\mathbb{Z},
	\end{cases}
	\quad
	A_{k,1} \subset 
	\begin{cases}
	\mathcal{M}_k(\varGamma_0(16)) & \text{if } k \in \frac{1}{2}+\mathbb{Z},\\
	\mathcal{M}_k(\varGamma_0(16),\chi_{-8}) & \text{if } k \in 1+2\mathbb{Z},\\
	\mathcal{M}_k(\varGamma_0(16),\chi_{8}) & \text{if } k \in 2\mathbb{Z}.
	\end{cases}
	$$

	Computing the Fourier coefficients, we obtain
	$\operatorname{ord}_{\infty}(f_1) = \operatorname{ord}_{\infty}(f_2) = 0$,
	$\operatorname{ord}_{\infty}(f_3) = 1$.
	Consequently,
	$
	\operatorname{ord}_{\infty}
	(f_1^{\alpha_1} f_2^{\alpha_2} f_3^{\alpha_3}) 
	= \alpha_3.
	$
	For $ k \in \frac{1}{2}\mathbb{Z} $, $ k > 0 $, 
	it is easy to verify that any two modular forms in $ A_{k,0} $ 
	have distinct orders at $ \infty $. 
	Therefore $ A_{k,0} $ is linearly independent.
	The situation for $ A_{k,1} $ is almost the same as that for $ A_{k,0} $. 
	Hence we obtain that $ \bar{\phi}_{16} $ is an isomorphism.

	(3) By Lemmas~\ref{lem:1}--\ref{lem:4}, for every half-integer $k \geqslant 0$ we have
	$$
	\dim \mathcal{M}_k^{\mathrm{quad}}(\varGamma_0(16)) = 
	4k+1,\quad
	\dim \mathcal{S}_k^{\mathrm{quad}}(\varGamma_0(16)) = 
		\begin{cases}
		0   & \text{if } 0 \leqslant k \leqslant 2,\\
		4k-9 & \text{if } k>2.
		\end{cases}
	$$
\end{proof}

\begin{proof}[Sketch of proof of Theorem~\ref{tho:4}]
	(1) Let these eta quotients
	$\frac{\eta(2z)^5}{\eta(z)^2\eta(4z)^2}$,
	$\frac{\eta(10z)^5}{\eta(5z)^2\eta(20z)^2}$,\break
	$\frac{\eta(z)\eta(2z)\eta(10z)\eta(20z)}{\eta(4z)\eta(5z)}$,
	$\frac{\eta(2z)^5\eta(5z)^2\eta(20z)^7}{\eta(z)^2\eta(4z)^3\eta(10z)^5}$,
	$\frac{\eta(20z)^8}{\eta(10z)^4}$,
	$\frac{\eta(20z)^5}{\eta(4z)}$,
	$\frac{\eta(2z)\eta(20z)^{10}}{\eta(4z)^2\eta(10z)^5}$,
	be respectively denoted by $f_1,f_2,\cdots,f_7$.
	By Lemma~\ref{lem:8},\ref{lem:9},
	we have $f_1\in \mathcal{M}_{1/2}(\varGamma_0(4) ) $,
	$f_2\in \mathcal{M}_{1/2}(\varGamma_0(20),\chi_{5} ) $,
	$f_3 \in \mathcal{M}_{1}(\varGamma_0(20),\chi_{-5}) $,
	$f_4,f_5\in \mathcal{M}_{2}(\varGamma_0(20))$,
	$f_6,f_7\in \mathcal{M}_{2}(\varGamma_0(20),\chi_{5}) $.
	Define the map
	$$
	\bar{\phi}_{20} : \mathbb{C}[x_1,x_2,\cdots,x_7]/I_{20} \longrightarrow \mathbb{C}[f_1,f_2,\cdots,f_7],\quad
	x_i+I_{20} \mapsto f_i\ (i=1,2,\cdots,7).
	$$
	
	(2) We choose a graded reverse lexicographic order 
	on $\mathbb{C}[x_1,x_2,\cdots,x_7]$ 
	with $\deg_{\text{w}}(x^{\alpha}) = \alpha_1 + \alpha_2 + 2\alpha_3
	+4(\alpha_4+\alpha_5+\alpha_6+\alpha_7)$ 
	and $x_1 > x_2 > x_3 > x_4 > x_6 > x_5 > x_7 $.
	Under this order, 
	we obtain 
	$\operatorname{LT}(I_{20}) = 
	\langle x_{1}^{2} x_{3}^{3},x_{3}^{4},x_{4}^{2}, x_{4} x_{6}, x_{6}^{2}, x_{4} x_{5}, x_{6} x_{5}, x_{5}^{2}, 
	x_{1}^{3} x_{3}^{2}, 
	x_{3} x_{4}, x_{1}^{2} x_{6}, x_{3} x_{6}, x_{1}^{2} x_{5}, x_{3} x_{5}, 
	x_{1}^{2} x_{2}^{3},\allowbreak x_{1}^{2} x_{2} x_{3},\allowbreak x_{1} x_{4},\allowbreak x_{2} x_{4},\allowbreak
	x_{1}^{4}\rangle$.
	Let $ S = \{ x^{\alpha} + I_{20} \mid x^{\alpha} 
	\notin \operatorname{LT}(I_{20}) \} $. 

	Given $k\in\frac{1}{2}\mathbb{Z},k>0$,
	let
	$$
		S_{k,0}=S\cap
		\{x^{\alpha}+I_{20}\mid \deg_{\text{w}}(x^{\alpha})=2k, 
		\alpha_2+\alpha_3+\alpha_6+\alpha_7 \equiv 2k \!\!\!\!\pmod 2 \},
	$$
	$$
		S_{k,1}=S\cap
		\{x^{\alpha}+I_{20}\mid \deg_{\text{w}}(x^{\alpha})=2k, 
		\alpha_2+\alpha_3+\alpha_6+\alpha_7 \equiv 2k+1 \!\!\!\!\pmod 2 \}.
	$$
	Let $\bar{\phi}_{20}(S_{k,i})=A_{k,i}$,
	we obtain
	$$
		S=\{1+I_{20} \}\bigsqcup
		\left(\bigsqcup_{k\in\frac{1}{2}\mathbb{Z},k>0,i\in\{0,1\}} S_{k,i}\right),\qquad
		\bar{\phi}_{20}(S)=\{1\}\bigsqcup
		\left(\bigsqcup_{k\in\frac{1}{2}\mathbb{Z},k>0,i\in\{0,1\}} A_{k,i}\right).
	$$
	For $k\in \frac{1}{2}\mathbb{Z}, k>4$,
	let
	$B_{k,0}=\{f_2^{2k},f_1f_2^{2k-3}f_3,f_2^{2k-4}f_3^2,f_1f_2^{2k-7}f_3^3,f_1f_2^{2k-5}f_6,f_2^{2k-4}f_5 \} $
	and
	$B_{k,1}=\{f_1f_2^{2k-1},f_2^{2k-2}f_3,\allowbreak f_1f_2^{2k-5}f_3^2,f_2^{2k-6}f_3^3,f_2^{2k-4}f_6,f_1f_2^{2k-5}f_5 \} $,
	we obtain
	$$A_{k,0}=f_7 A_{k-2,1} \sqcup B_{k,0},\quad  A_{k,1}=f_7 A_{k-2,0} \sqcup B_{k,1}. $$
	Moreover, since
	$$
	A_{k,0} \subset 
	\begin{cases}
	\mathcal{M}_k(\varGamma_0(20),\chi_5) & \text{if } k \in \frac{1}{2}+\mathbb{Z},\\
	\mathcal{M}_k(\varGamma_0(20),\chi_{-4}) & \text{if } k \in 1+2\mathbb{Z},\\
	\mathcal{M}_k(\varGamma_0(20)) & \text{if } k \in 2\mathbb{Z},
	\end{cases}
	\quad
	A_{k,1} \subset 
	\begin{cases}
	\mathcal{M}_k(\varGamma_0(20)) & \text{if } k \in \frac{1}{2}+\mathbb{Z},\\
	\mathcal{M}_k(\varGamma_0(20),\chi_{-20}) & \text{if } k \in 1+2\mathbb{Z},\\
	\mathcal{M}_k(\varGamma_0(20),\chi_{5}) & \text{if } k \in 2\mathbb{Z}.
	\end{cases}
	$$

	Computing the Fourier coefficients, we obtain
	$\operatorname{ord}_{\infty}(f_1) = \operatorname{ord}_{\infty}(f_2) = 0$,
	$\operatorname{ord}_{\infty}(f_3) = 1$,
	$\operatorname{ord}_{\infty}(f_4) = \operatorname{ord}_{\infty}(f_6) = 4$,
	$\operatorname{ord}_{\infty}(f_5) = 5$,
	$\operatorname{ord}_{\infty}(f_7) = 6$.
	Consequently,
	$
	\operatorname{ord}_{\infty}
	(f^{\alpha}) 
	= \alpha_3+4\alpha_4+5\alpha_5+4\alpha_6+6\alpha_7.
	$
	For $ k \in \frac{1}{2}\mathbb{Z} $ with $ 0 < k \leqslant 4 $, 
	we can prove that $ A_{k,0} $ and $ A_{k,1} $ are linearly independent 
	by computing sufficiently many Fourier coefficients.
	For $k \in \frac{1}{2}\mathbb{Z} $ with $ k > 4 $, 
	it is easy to verify that any two modular forms in $ B_{k,0} $ 
	have distinct orders at $ \infty $, 
	and these orders are at most $ 5 $. 
	The same holds for $ B_{k,1} $.
	Moreover, since
	$
	A_{k,0} = f_7 A_{k-2,1} \sqcup B_{k,0},
	A_{k,1} = f_7 A_{k-2,0} \sqcup B_{k,1},
	$
	and the modular forms in $ f_7 A_{k-2,0} $ and $ f_7 A_{k-2,1} $ 
	have orders at $ \infty $ at least $ 6 $, 
	a recursive argument then shows that $ A_{k,0} $ and $ A_{k,1} $ 
	are linearly independent.

	(3) By Lemmas~\ref{lem:1}--\ref{lem:4}, for every half-integer $k \geqslant 0$ we have
	$$
	\dim \mathcal{M}_k^{\mathrm{quad}}(\varGamma_0(20)) = 
		\begin{cases}
		6k   & \text{if }  2k \equiv 0\pmod 4,\\
		6k-1   & \text{if }  2k \equiv 1\pmod 4,\\
		6k-2   & \text{if }  2k \equiv 2\pmod 4,\\
		6k-3   & \text{if }  2k \equiv 3\pmod 4,
		\end{cases}
	$$
	$$
	\dim \mathcal{S}_k^{\mathrm{quad}}(\varGamma_0(20)) = 
		\begin{cases}
		0   & \text{if } 0 \leqslant k < 2,\\
		1   & \text{if } k = 2,\\
		6k-9   & \text{if }  k>2,\;2k \equiv 1\pmod 4,\\
		6k-10   & \text{if }  k>2,\;2k \equiv 2\pmod 4,\\
		6k-11   & \text{if }  k>2,\;2k \equiv 3\pmod 4,\\
		6k-12   & \text{if }  k>2,\;2k \equiv 0\pmod 4.
		\end{cases}
	$$
\end{proof}

\begin{proof}[Sketch of proof of Theorem~\ref{tho:5}]
	(1) Let the three eta quotients
	$\frac{\eta(2z)^5}{\eta(z)^2\eta(4z)^2}$,
	$\frac{\eta(4z)^5}{\eta(2z)^2\eta(8z)^2}$,
	$\frac{\eta(16z)^2}{\eta(8z)}$,
	$\frac{\eta(32z)^2}{\eta(16z)}$
	be respectively denoted by $f_1$, $f_2$, $f_3$,$f_4$.
	By Lemma~\ref{lem:8},\ref{lem:9},
	we have $f_1\in \mathcal{M}_{1/2}(\varGamma_0(4) ) $,
	$f_2\in \mathcal{M}_{1/2}(\varGamma_0(8),\chi_{8} ) $,
	$f_3\in \mathcal{M}_{1/2}(\varGamma_0(16)) $,
	$f_4\in \mathcal{M}_{1/2}(\varGamma_0(32),\chi_8) $.
	Let $I_{32}=\langle x_1^2-x_2^2+8x_3^2-4x_1x_3,x_3^2+2x_4^2-x_2x_4\rangle$.
	Define the map
	$$
	\bar{\phi}_{32} : \mathbb{C}[x_1,x_2,x_3,x_4]/I_{32} \longrightarrow \mathbb{C}[f_1,f_2,f_3,f_4],\quad
	x_i+I_{32} \mapsto f_i\ (i=1,2,3,4).
	$$
	
	(2) We choose a graded reverse lexicographic order 
	on $\mathbb{C}[x_1,x_2,x_3,x_4]$ 
	with $\deg(x^{\alpha}) = \alpha_1 + \alpha_2 + \alpha_3 + \alpha_4$ 
	and $x_1 > x_2 > x_3 > x_4 $.
	Under this order, 
	$\{ x_{1}^{2} - x_{2}^{2} - 4 x_{1} x_{3} + 8 x_{2} x_{4} - 16 x_{4}^{2}, 
	x_{3}^{2}-x_{2}x_{4}+2x_{4}^{2} \}$
	is a Gr\"{o}bner basis for $I_{32}$.
	Furthermore, 
	we obtain 
	$\operatorname{LT}(I_{32}) = 
	\langle x_1^2, x_3^2\rangle$.
	Let $ S = \{ x^{\alpha} + I_{32} \mid x^{\alpha} 
	\notin \operatorname{LT}(I_{32}) \} $. 

	Given $k\in\frac{1}{2}\mathbb{Z},k>0$,
	let
	$$
		A_{k,0}=
			\{f^{\alpha}
			\mid \alpha_1+\alpha_2+\alpha_3+\alpha_4=2k,
			\alpha_1,\alpha_3 \in \{0,1\},
			\alpha_2,\alpha_4 \in \mathbb{N},\; \alpha_1+\alpha_3 \equiv 0 \!\!\!\!\pmod 2\},
	$$
	$$
		A_{k,1}=
			\{f^{\alpha}
			\mid \alpha_1+\alpha_2+\alpha_3+\alpha_4=2k,
			\alpha_1,\alpha_3 \in \{0,1\},
			\alpha_2,\alpha_4 \in \mathbb{N},\; \alpha_1+\alpha_3 \equiv 1 \!\!\!\!\pmod 2\}.
	$$
	By $\operatorname{LT}(I_{32}) = 
	\langle x_1^2, x_3^2\rangle$,
	we obtain
	$$
		\bar{\phi}_{32}(S)=\{1\}\bigsqcup
		\left(\bigsqcup_{k\in\frac{1}{2}\mathbb{Z},k>0,i\in\{0,1\}} A_{k,i}\right).
	$$
	Moreover, since
	$$
	A_{k,0} \subset 
	\begin{cases}
	\mathcal{M}_k(\varGamma_0(32),\chi_8) & \text{if } k \in \frac{1}{2}+\mathbb{Z},\\
	\mathcal{M}_k(\varGamma_0(32),\chi_{-4}) & \text{if } k \in 1+2\mathbb{Z},\\
	\mathcal{M}_k(\varGamma_0(32)) & \text{if } k \in 2\mathbb{Z},
	\end{cases}
	\quad
	A_{k,1} \subset 
	\begin{cases}
	\mathcal{M}_k(\varGamma_0(32)) & \text{if } k \in \frac{1}{2}+\mathbb{Z},\\
	\mathcal{M}_k(\varGamma_0(32),\chi_{-8}) & \text{if } k \in 1+2\mathbb{Z},\\
	\mathcal{M}_k(\varGamma_0(32),\chi_{8}) & \text{if } k \in 2\mathbb{Z}.
	\end{cases}
	$$

	Computing the Fourier coefficients, we obtain
	$\operatorname{ord}_{\infty}(f_1) = \operatorname{ord}_{\infty}(f_2) = 0$,
	$\operatorname{ord}_{\infty}(f_3) = 1$,
	$\operatorname{ord}_{\infty}(f_4) = 2$.
	Consequently,
	$
	\operatorname{ord}_{\infty}
	(f^{\alpha}) 
	= \alpha_3+2\alpha_4.
	$
	For $ k \in \frac{1}{2}\mathbb{Z} $, $ k > 0 $, 
	it is easy to verify that any two modular forms in $ A_{k,0} $ 
	have distinct orders at $ \infty $. 
	Therefore $ A_{k,0} $ is linearly independent.
	The situation for $ A_{k,1} $ is almost the same as that for $ A_{k,0} $. 

	(3) By Lemmas~\ref{lem:1}--\ref{lem:4}, for every half-integer $k \geqslant 0$ we have
	$$
	\dim \mathcal{M}_k^{\mathrm{quad}}(\varGamma_0(32)) = 
		\begin{cases}
		1   & \text{if } k=0,\\
		8k & \text{if } k>0,
		\end{cases}\quad 
	\dim \mathcal{S}_k^{\mathrm{quad}}(\varGamma_0(32)) = 
		\begin{cases}
		0   & \text{if } 0 \leqslant k < 2,\\
		1   & \text{if } k=2,\\
		8k-16 & \text{if } k>2.
		\end{cases}
	$$
\end{proof}

\section{Proof of Theorem~\ref{tho:6}--\ref{tho:8}}

For a positive integer $k$, 
let 
$
r_k(n) = \#\{(x_1,\dots,x_k) \in \mathbb{Z}^k \mid x_1^2 + \cdots + x_k^2 = n\}.
$
Then we have the following identity:
\[
\sum_{n=0}^{\infty} r_k(n) q^n = \left( \sum_{m=-\infty}^{\infty} q^{m^2} \right)^k
=\frac{\eta(2z)^{5k}}{\eta(z)^{2k}\eta(4z)^{2k}}.
\]

\begin{proof}[Proof of Theorem~\ref{tho:6}]
	By Theorem~\ref{tho:5}, 
	we obtain a basis of $\mathcal{M}_{3/2}(\varGamma_0(32))$:
	\[
		\frac{\eta(2z)^{15}}{\eta(z)^6\eta(4z)^6},
		\frac{\eta(2z)\eta(4z)^8}{\eta(z)^2\eta(8z)^4},
		\frac{\eta(2z)^{10}\eta(16z)^2}{\eta(z)^4\eta(4z)^4\eta(8z)},
	\] 
	\[
		\frac{\eta(4z)^{10}\eta(16z)^2}{\eta(2z)^4\eta(8z)^5},
		\frac{\eta(2z)^3\eta(4z)^3\eta(32z)^2}{\eta(z)^2\eta(8z)^2\eta(16z)},
		\frac{\eta(4z)^5\eta(16z)\eta(32z)^2}{\eta(2z)^2\eta(8z)^3}.
	\]
	The first two eta quotients 
	form a basis of $\mathcal{M}_{3/2}(\varGamma_0(8))$, 
	and the first four eta quotients 
	form a basis of $\mathcal{M}_{3/2}(\varGamma_0(16))$.
	By Lemma~\ref{lem:6} and Lemma~\ref{lem:7}, 
	together with the computation of sufficiently many Fourier coefficients, 
	we obtain the following equalities:
	\begin{align*}
		\frac{\eta(2z)\eta(4z)^8}{\eta(z)^2\eta(8z)^4}
		&= \left. \frac{\eta(2z)^{15}}{\eta(z)^6\eta(4z)^6}\right|S_{4,0}
		+ \frac{1}{3}\left. \frac{\eta(2z)^{15}}{\eta(z)^6\eta(4z)^6}\right|S_{4,1}
		+ \frac{1}{3}\left. \frac{\eta(2z)^{15}}{\eta(z)^6\eta(4z)^6}\right|S_{4,2}
		\\
		&\quad + \left. \frac{\eta(2z)^{15}}{\eta(z)^6\eta(4z)^6}\right|S_{4,3},
		\\
		\frac{\eta(2z)^{10}\eta(16z)^2}{\eta(z)^4\eta(4z)^4\eta(8z)}
		&= \frac{1}{6}\left. \frac{\eta(2z)^{15}}{\eta(z)^6\eta(4z)^6}\right|S_{4,1}
		+ \frac{1}{3}\left. \frac{\eta(2z)^{15}}{\eta(z)^6\eta(4z)^6}\right|S_{4,2}
		+ \frac{1}{2}\left. \frac{\eta(2z)^{15}}{\eta(z)^6\eta(4z)^6}\right|S_{8,3},
		\\
		\frac{\eta(4z)^{10}\eta(16z)^2}{\eta(2z)^4\eta(8z)^5}
		&= \frac{1}{6}\left. \frac{\eta(2z)^{15}}{\eta(z)^6\eta(4z)^6}\right|S_{4,1}
		+ \frac{1}{2}\left. \frac{\eta(2z)^{15}}{\eta(z)^6\eta(4z)^6}\right|S_{8,3},
		\\
		\frac{\eta(2z)^3\eta(4z)^3\eta(32z)^2}{\eta(z)^2\eta(8z)^2\eta(16z)}
		&= \frac{1}{12}\left. \frac{\eta(2z)^{15}}{\eta(z)^6\eta(4z)^6}\right|S_{4,2}
		+ \frac{1}{4}\left. \frac{\eta(2z)^{15}}{\eta(z)^6\eta(4z)^6}\right|S_{8,3}
		+ \frac{1}{3}\left. \frac{\eta(2z)^{15}}{\eta(z)^6\eta(4z)^6}\right|S_{16,4}\\
		&\quad + \frac{1}{6}\left. \frac{\eta(2z)^{15}}{\eta(z)^6\eta(4z)^6}\right|S_{8,5}
		+ \frac{1}{3}\left. \frac{\eta(2z)^{15}}{\eta(z)^6\eta(4z)^6}\right|S_{16,8},
		\\
		\frac{\eta(4z)^5\eta(16z)\eta(32z)^2}{\eta(2z)^2\eta(8z)^3}
		&= \frac{1}{8}\left. \frac{\eta(2z)^{15}}{\eta(z)^6\eta(4z)^6}\right|S_{8,3}
		+ \frac{1}{12}\left. \frac{\eta(2z)^{15}}{\eta(z)^6\eta(4z)^6}\right|S_{8,5}.
	\end{align*}
	Consequently, we obtain the following bases:
	\begin{itemize}
	\item A basis of $\mathcal{M}_{3/2}(\varGamma_0(8))$:
	\[
	\left. \frac{\eta(2z)^{15}}{\eta(z)^6\eta(4z)^6}\right|S_{4,0} + \left. \frac{\eta(2z)^{15}}{\eta(z)^6\eta(4z)^6}\right|S_{4,3},\;
	\left. \frac{\eta(2z)^{15}}{\eta(z)^6\eta(4z)^6}\right|S_{4,1} + \left. \frac{\eta(2z)^{15}}{\eta(z)^6\eta(4z)^6}\right|S_{4,2}.
	\]
	\item A basis of $\mathcal{M}_{3/2}(\varGamma_0(16))$:
	\[
	\left. \frac{\eta(2z)^{15}}{\eta(z)^6\eta(4z)^6}\right|S_{4,0},\;
	\left. \frac{\eta(2z)^{15}}{\eta(z)^6\eta(4z)^6}\right|S_{4,1},\;
	\left. \frac{\eta(2z)^{15}}{\eta(z)^6\eta(4z)^6}\right|S_{4,2},\;
	\left. \frac{\eta(2z)^{15}}{\eta(z)^6\eta(4z)^6}\right|S_{8,3}.
	\]
	\item A basis of $\mathcal{M}_{3/2}(\varGamma_0(32))$:
	\[
	\left. \frac{\eta(2z)^{15}}{\eta(z)^6\eta(4z)^6}\right|S_{16,0} + \left. \frac{\eta(2z)^{15}}{\eta(z)^6\eta(4z)^6}\right|S_{16,12},\;
	\left. \frac{\eta(2z)^{15}}{\eta(z)^6\eta(4z)^6}\right|S_{8,1},\;
	\left. \frac{\eta(2z)^{15}}{\eta(z)^6\eta(4z)^6}\right|S_{4,2},
	\] 
	\[
	\left. \frac{\eta(2z)^{15}}{\eta(z)^6\eta(4z)^6}\right|S_{8,3},\;
	\left. \frac{\eta(2z)^{15}}{\eta(z)^6\eta(4z)^6}\right|S_{16,4} + \left. \frac{\eta(2z)^{15}}{\eta(z)^6\eta(4z)^6}\right|S_{16,8},\;
	\left. \frac{\eta(2z)^{15}}{\eta(z)^6\eta(4z)^6}\right|S_{8,5}.
	\]
	\end{itemize}

	By \cite[Chapter 4, Theorem 1]{Grosswald1985}, 
	$r_3(n) \neq 0$ if and only if 
	$n$ is not of the form $4^a(8b + 7)$ with $a,b \in \mathbb{Z}$.
	$
	\frac{\eta(2z)^{15}}{\eta(z)^{6}\eta(4z)^{6}} = \sum_{n=0}^{\infty} r_3(n) q^n.
	$
	Consequently, the Fourier coefficients of
	\[
	\left. \frac{\eta(2z)^{15}}{\eta(z)^6\eta(4z)^6}\right|S_{16,0} + \left. \frac{\eta(2z)^{15}}{\eta(z)^6\eta(4z)^6}\right|S_{16,12}
	\]
	exhibit no periodic sign pattern, whereas the Fourier coefficients of
	\[
	\left. \frac{\eta(2z)^{15}}{\eta(z)^6\eta(4z)^6}\right|S_{8,1},\;
	\left. \frac{\eta(2z)^{15}}{\eta(z)^6\eta(4z)^6}\right|S_{4,2},\;
	\left. \frac{\eta(2z)^{15}}{\eta(z)^6\eta(4z)^6}\right|S_{8,3},\]
	\[
	\left. \frac{\eta(2z)^{15}}{\eta(z)^6\eta(4z)^6}\right|S_{16,4} + \left. \frac{\eta(2z)^{15}}{\eta(z)^6\eta(4z)^6}\right|S_{16,8},\;
	\left. \frac{\eta(2z)^{15}}{\eta(z)^6\eta(4z)^6}\right|S_{8,5}
	\]
	do have periodic sign patterns.
	Thus, 
	when $ a(0) \neq 0 $, 
	the sign $\operatorname{sgn}(a(n))$ is not periodic.
	When $ a(0) = 0 $, 
	the sign $\operatorname{sgn}(a(n))$ is periodic. 
	A simple analysis of the basis of $\mathcal{M}_{3/2}(\varGamma_0(2^r))$ 
	then shows that the period $T$ of $f$ satisfies $T \mid 2^{r-1}$.
\end{proof}

The proofs of Theorem~\ref{tho:7} and Theorem~\ref{tho:8} 
	are similar to that of Theorem~\ref{tho:6}. 
	Therefore we only present the key steps.

\begin{proof}[Sketch of proof of Theorem~\ref{tho:7}]
	By Theorem~\ref{tho:5}, 
	we obtain a basis of $\mathcal{M}_{3/2}(\varGamma_0(32),\chi_8)$:
	\[
    \frac{\eta(2z)^8\eta(4z)}{\eta(z)^4\eta(8z)^2},\;
    \frac{\eta(4z)^{15}}{\eta(2z)^{6}\eta(8z)^6},\;
    \frac{\eta(2z)^3\eta(4z)^3\eta(16z)^2}{\eta(z)^2\eta(8z)^3},
	\]
    \[
	\frac{\eta(2z)^{10}\eta(32z)^2}{\eta(z)^4\eta(4z)^4\eta(16z)},\;
    \frac{\eta(4z)^{10}\eta(32z)^2}{\eta(2z)^4\eta(8z)^4\eta(16z)},\;
    \frac{\eta(2z)^5\eta(16z)\eta(32z)^2}{\eta(z)^2\eta(4z)^2\eta(8z)}.
	\]
	The first two eta quotients 
	form a basis of $\mathcal{M}_{3/2}(\varGamma_0(8),\chi_8)$, 
	and the first three eta quotients 
	form a basis of $\mathcal{M}_{3/2}(\varGamma_0(16),\chi_8)$.
	By Lemma~\ref{lem:6} and Lemma~\ref{lem:7}, 
	together with the computation of sufficiently many Fourier coefficients, 
	we obtain the following equalities:
	\begin{align*}
		\frac{\eta(4z)^{15}}{\eta(2z)^{6}\eta(8z)^6}
    	&=\left. \frac{\eta(2z)^8\eta(4z)}{\eta(z)^4\eta(8z)^2}\right|S_{2,0},
		\\
		\frac{\eta(2z)^3\eta(4z)^3\eta(16z)^2}{\eta(z)^2\eta(8z)^3}
		&=\frac{1}{4}\left. \frac{\eta(2z)^8\eta(4z)}{\eta(z)^4\eta(8z)^2}\right|S_{2,1}
		+\frac{1}{3}\left. \frac{\eta(2z)^8\eta(4z)}{\eta(z)^4\eta(8z)^2}\right|S_{8,2}
		+\frac{1}{3}\left. \frac{\eta(2z)^8\eta(4z)}{\eta(z)^4\eta(8z)^2}\right|S_{8,4},
		\\
		\frac{\eta(2z)^{10}\eta(32z)^2}{\eta(z)^4\eta(4z)^4\eta(16z)}
		&=\frac{1}{6}\left. \frac{\eta(2z)^8\eta(4z)}{\eta(z)^4\eta(8z)^2}\right|S_{8,2}
		+\frac{1}{2}\left. \frac{\eta(2z)^8\eta(4z)}{\eta(z)^4\eta(8z)^2}\right|S_{4,3}
		+\frac{1}{3}\left. \frac{\eta(2z)^8\eta(4z)}{\eta(z)^4\eta(8z)^2}\right|S_{8,4}\\
		&\quad +\frac{1}{2}\left. \frac{\eta(2z)^8\eta(4z)}{\eta(z)^4\eta(8z)^2}\right|S_{16,6},
		\\
		\frac{\eta(4z)^{10}\eta(32z)^2}{\eta(2z)^4\eta(8z)^4\eta(16z)}
		&=\frac{1}{6}\left. \frac{\eta(2z)^8\eta(4z)}{\eta(z)^4\eta(8z)^2}\right|S_{8,2}
		+\frac{1}{3}\left. \frac{\eta(2z)^8\eta(4z)}{\eta(z)^4\eta(8z)^2}\right|S_{8,4}
		+\frac{1}{2}\left. \frac{\eta(2z)^8\eta(4z)}{\eta(z)^4\eta(8z)^2}\right|S_{16,6},
		\\
		\frac{\eta(2z)^5\eta(16z)\eta(32z)^2}{\eta(z)^2\eta(4z)^2\eta(8z)}
		&=\frac{1}{8}\left. \frac{\eta(2z)^8\eta(4z)}{\eta(z)^4\eta(8z)^2}\right|S_{4,3}+\frac{1}{6}\left. \frac{\eta(2z)^8\eta(4z)}{\eta(z)^4\eta(8z)^2}\right|S_{8,4}.
	\end{align*}

	Let
	$
	\frac{\eta(2z)^8\eta(4z)}{\eta(z)^4\eta(8z)^2} = \sum_{n=0}^{\infty} b(n) q^n.
	$
	By Lemma~\ref{lem:6} and Lemma~\ref{lem:7}, 
	we obtain
	\[
	\frac{\eta(2z)^8\eta(4z)}{\eta(z)^4\eta(8z)^2}
	= \frac{1}{3}\left. \frac{\eta(2z)^{15}}{\eta(z)^6\eta(4z)^6}\right|U(2)
	+ \frac{2}{3}\left. \frac{\eta(2z)^{15}}{\eta(z)^6\eta(4z)^6}\right|V(2).
	\]
	Consequently,
	\[
	b(n) = \begin{cases}
	r_3(2n) & \text{if } n \equiv 0 \pmod{2},\\[4pt]
	\dfrac{1}{3}\, r_3(2n) & \text{if } n \equiv 1 \pmod{2}.
	\end{cases}
	\]
	Moreover, 
	by \cite[Chapter 4, Theorem 1]{Grosswald1985}, 
	we have that $b(n) \neq 0$ if and only if $n$ is not of the form $4^c(16d + 14)$ with $c,d \in \mathbb{Z}$.
\end{proof}

\begin{proof}[Sketch of proof of Theorem~\ref{tho:8}]
	By Theorem~\ref{tho:3}, 
	we obtain a basis of $\mathcal{M}_{2}(\varGamma_0(16))$:
	\[
		\frac{\eta(2z)^{20}}{\eta(z)^{8}\eta(4z)^{8}},\;
		\frac{\eta(2z)^{6}\eta(4z)^{6}}{\eta(z)^{4}\eta(8z)^{4}},\;
		\frac{\eta(4z)^{20}}{\eta(2z)^{8}\eta(8z)^{8}},\;
		\frac{\eta(2z)^{15}\eta(16z)^{2}}{\eta(z)^{6}\eta(4z)^{6}\eta(8z)},\;
		\frac{\eta(2z)\,\eta(4z)^{8}\eta(16z)^{2}}{\eta(z)^{2}\eta(8z)^{5}}
	\]
	The first three eta quotients 
	form a basis of $\mathcal{M}_{2}(\varGamma_0(8))$.
	Moreover, the eta quotients
	$\frac{\eta(2z)^{20}}{\eta(z)^{8}\eta(4z)^{8}}$,
	$\frac{\eta(4z)^8}{\eta(2z)^4}$
	form a basis of $\mathcal{M}_{2}(\varGamma_0(4))$.
	By Lemma~\ref{lem:5} and Lemma~\ref{lem:7}, 
	together with the computation of sufficiently many Fourier coefficients, 
	we obtain the following equalities:
	\begin{align*}
		\frac{\eta(4z)^8}{\eta(2z)^4}
		&= \frac{1}{8}\left. \frac{\eta(2z)^{20}}{\eta(z)^{8}\eta(4z)^{8}}\right|{S_{2,1}},
		\\
		\frac{\eta(2z)^{6}\eta(4z)^{6}}{\eta(z)^{4}\eta(8z)^{4}}
		&= \left.\frac{\eta(2z)^{20}}{\eta(z)^{8}\eta(4z)^{8}}\right|{S_{4,0}}
		+ \frac{1}{2}\left. \frac{\eta(2z)^{20}}{\eta(z)^{8}\eta(4z)^{8}}\right|{S_{2,1}}
		+ \frac{1}{3}\left.\frac{\eta(2z)^{20}}{\eta(z)^{8}\eta(4z)^{8}}\right|{S_{4,2}}, 
		\\
		\frac{\eta(4z)^{20}}{\eta(2z)^{8}\eta(8z)^{8}}
		&= \left.\frac{\eta(2z)^{20}}{\eta(z)^{8}\eta(4z)^{8}}\right|{S_{4,0}}
		+ \frac{1}{3}\left.\frac{\eta(2z)^{20}}{\eta(z)^{8}\eta(4z)^{8}}\right|{S_{4,2}},
		\\
		\frac{\eta(2z)^{15}\eta(16z)^{2}}{\eta(z)^{6}\eta(4z)^{6}\eta(8z)}
		&= \frac{1}{8}\left.\frac{\eta(2z)^{20}}{\eta(z)^{8}\eta(4z)^{8}}\right|{S_{4,1}}
		+ \frac{1}{4}\left.\frac{\eta(2z)^{20}}{\eta(z)^{8}\eta(4z)^{8}}\right|{S_{4,2}}
		+ \frac{3}{8}\left.\frac{\eta(2z)^{20}}{\eta(z)^{8}\eta(4z)^{8}}\right|{S_{4,3}}
		\\
		&\quad + \frac{1}{3}\left.\frac{\eta(2z)^{20}}{\eta(z)^{8}\eta(4z)^{8}}\right|{S_{8,4}},
		\\
		\frac{\eta(2z)\eta(4z)^{8}\eta(16z)^{2}}{\eta(z)^{2}\eta(8z)^{5}}
		&= \frac{1}{8}\left.\frac{\eta(2z)^{20}}{\eta(z)^{8}\eta(4z)^{8}}\right|{S_{2,1}}
		+ \frac{1}{12}\left.\frac{\eta(2z)^{20}}{\eta(z)^{8}\eta(4z)^{8}}\right|{S_{4,2}}
		+ \frac{1}{3}\left.\frac{\eta(2z)^{20}}{\eta(z)^{8}\eta(4z)^{8}}\right|{S_{8,4}}.
	\end{align*}
	We know
	$
	\frac{\eta(2z)^{20}}{\eta(z)^{8}\eta(4z)^{8}} = \sum_{n=0}^{\infty} r_4(n) q^n.
	$
	By \cite[Chapter 3, Theorem 2]{Grosswald1985},
	$r_4(n) > 0$ for every
	$n \in \mathbb{N}$.
	
\end{proof}

\bibliographystyle{amsalpha}
\bibliography{refs} 

\end{document}